\newtheorem{theorem}{Theorem}[section]
\newtheorem{lemma}[theorem]{Lemma}
\newtheorem{corollary}[theorem]{Corollary}
\newtheorem{proposition}[theorem]{Proposition}
\newtheorem{conjecture}[theorem]{Conjecture}
\newtheorem{claim}[theorem]{Claim}
\theoremstyle{definition}
\begin{document}
\date{\today}
\title{Edge precoloring extension of hypercubes}

\author{
{\sl Carl Johan Casselgren}\footnote{Department of Mathematics, Link\"oping University, SE-581 83 Link\"oping, Sweden. 
{\it E-mail address:} carl.johan.casselgren@liu.se  }
\and {\sl Klas Markstr\"om}\footnote{Department of Mathematics, 
Ume\aa\enskip University, 
SE-901 87 Ume\aa, Sweden.
{\it E-mail address:} klas.markstrom@umu.se
}
\and {\sl Lan Anh Pham}\footnote{Department of Mathematics,
Ume\aa\enskip University, 
SE-901 87 Ume\aa, Sweden.
{\it E-mail address:} lan.pham@umu.se
}
}
\maketitle

\bigskip
\noindent
{\bf Abstract.}
We consider the problem of extending partial edge colorings
of hypercubes. In particular, we obtain an analogue of
the positive solution to the famous Evans' conjecture
on completing partial Latin squares by proving that every proper partial
edge coloring of at most $d-1$ edges of the $d$-dimensional 
hypercube $Q_d$ can be extended to a proper $d$-edge coloring
of $Q_d$. Additionally, we characterize which partial edge
colorings of $Q_d$ with precisely $d$ precolored edges
are extendable to proper $d$-edge colorings of $Q_d$.

\bigskip

\noindent
\small{\emph{Keywords: Edge coloring, hypercube, precoloring extension}}

\section{Introduction}

	An {\em edge precoloring} (or {\em partial edge coloring}) 
	of a graph $G$ is a proper edge coloring of some 
	subset $E' \subseteq E(G)$; {\em a $t$-edge precoloring}
	is such a coloring with $t$ colors.	
		An edge $t$-precoloring $\varphi$ is
	{\em extendable} if there is a proper $t$-edge coloring $f$
	such that $f(e) = \varphi(e)$ for any edge $e$ that is colored
	under $\varphi$; $f$ is called an {\em extension}
	of $\varphi$. 
	
	In general, the problem of extending a given edge precoloring
	is an $\mathcal{NP}$-complete problem, 
	already for $3$-regular bipartite graphs \cite{Fiala}.
	One of the earlier references explicitly discussing the problem of 
	extending a partial  edge coloring is \cite{MarcotteSeymour}; there a 
	simple necessary condition for the existence of an extension is 
	given and the authors  find a class  of graphs where this 
	condition is also sufficient.   More recently the question of 
	extending a precoloring where the precolored edges form 
	a matching has gathered interest. In \cite{EGHKPS} a 
	number of positive results and conjectures are given. 
	In particular it is conjectured that for every graph $G$,
	if $\varphi$ is an edge precoloring of a matching $M$ in $G$
	using $\Delta(G)+1$ colors,
	and any two edges in $M$ 
	are at distance at least $2$ from each other,  then $\varphi$ 
	can be extended to a proper $(\Delta(G)+1)$-edge coloring of $G$;
	this was first 
	conjectured in \cite{AlbersonMoore}, but then with 
	distance $3$ instead. 
	Here, as usual, $\Delta(G)$ denotes the maximum degree of a graph
	$G$, and
	by the {\em distance} between two
	edges $e$ and $e'$ we mean the number of edges in a 
	shortest path between an endpoint of $e$ and an endpoint of $e'$; 
	a {\em distance-$t$ matching} is a matching where any
	two edges are at distance at least $t$ from each other. A
	distance-$2$ matching is also called an {\em induced matching}.

	Note that the conjecture on distance-$2$ matchings in \cite{EGHKPS}
	is sharp both with respect to the
	distance between precolored edges,
	and in the sense that $\Delta(G)+1$ can in general 
	not be replaced by $\Delta(G)$,
	even if any two precolored edges are at arbitrarily 
	large distance from each other \cite{EGHKPS}.
	In \cite{EGHKPS}, it is proved that this conjecture holds
	for e.g. bipartite multigraphs and subcubic multigraphs, and
	in \cite{GiraoKang} it is proved that a version of the conjecture with the
	distance increased to 9 holds for general graphs.

	However, for one specific family of graphs, the balanced
	complete
	bipartite graphs $K_{n,n}$, the edge precoloring
	extension problem was studied far earlier than
	in the above-mentioned references. Here
	the  extension problem corresponds to asking whether a
	partial Latin square can be completed to a Latin square.
	In this form the problem appeared already in 1960, when Evans 
	\cite{Evans}  stated his now classic  conjecture  that for
	every positive integer $n$, if 
	$n-1$ edges in $K_{n,n}$ have been (properly) colored, 
	then the partial coloring can be extended to 
	a proper $n$-edge-coloring of $K_{n,n}$. 
	This conjecture was
	solved for large 
	$n$ by H\"aggkvist  \cite{Haggkvist78} and later for all $n$ by Smetaniuk 
	\cite{Smetaniuk}, 
	and independently by Andersen and Hilton \cite{AndersenHilton}.
	Moreover, Andersen and Hilton \cite{AndersenHilton}	characterized
	which $n \times n$ partial Latin squares with exactly $n$ non-empty
	cells are extendable.

	In this paper we consider the edge
	precoloring extension problem for the family of hypercubes. 
	Although matching extendability and subgraph containment problems
	have been studied extensively for hypercubes,
	(see e.g. \cite{VandenbusscheWest, Fink, WangZhang} and references
	therein)
	the edge precoloring extension problem for hypercubes
	seems to be a hitherto quite unexplored line of research.
	As in the setting of completing partial Latin squares (and unlike
	the papers \cite{EGHKPS,GiraoKang}) we consider only proper edge
	colorings of hypercubes $Q_d$ using exactly $\Delta(Q_d)$ colors.
	
	We prove that every edge precoloring of
	the $d$-dimensional hypercube $Q_d$ with at most
	$d-1$ precolored edges is extendable to a $d$-edge coloring
	of $Q_d$, thereby establishing an analogue of the positive
	resolution of Evans' conjecture.
	Moreover, similarly to \cite{AndersenHilton} we also characterize
	which proper precolorings with exactly $d$ precolored
	edges are not extendable to proper $d$-edge colorings of $Q_d$.
	We also consider the cases when the precolored
	edges form an induced matching, or one or two
	hypercubes of smaller dimension. The paper is concluded by a conjecture
	and some examples and remarks on edge precoloring extension of general
	$d$-regular bipartite graphs.

\section{Preliminaries}

	Unless otherwise stated all (partial) 
	edge colorings (or just {\em colorings})
	in this paper are proper. 
	Moreover, all proper $d$-edge colorings use colors $1,\dots, d$
	unless otherwise stated.
	If $\varphi$ is an edge precoloring of $G$,
	and an edge $e$ is colored under $\varphi$,
	then we say that $e$ is {\em $\varphi$-precolored}.

	If $\varphi$ is a (partial) proper $t$-edge coloring of $G$
	and $1 \leq a,b \leq t$, then a path or cycle in
	$G$ is called {\em $(a,b)$-colored under $\varphi$} if
	its edges are colored by colors $a$ and $b$ alternately.
	
	In the above definitions, we often leave out the explicit 
	reference to a
	coloring $\varphi$, if the coloring
	is clear from the context.
	
	\bigskip
	
	Havel and Moravek \cite{HavelMoravek} (see also \cite{Harary})
	proved a criterion
	for a graph $G$ to be a subgraph of a hypercube:

\begin{proposition}
\label{prop:HavelMoravek}
	A graph $G$ is a subgraph of $Q_d$ if and only if 
	there is a proper $d$-edge coloring of 
	$G$ with integers $\{1,\dots,d\}$
	such that
	\begin{itemize}
	%
	\item[(i)] in every path of $G$ there is some color that appears an odd
	number of times;
	
	\item[(ii)] in every cycle of $G$ no color appears an odd number of times.
	\end{itemize}
\end{proposition}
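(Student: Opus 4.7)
The plan is to exploit the natural coordinate structure of $Q_d=\{0,1\}^d$ in both directions. For necessity, I would take the canonical proper $d$-edge coloring of $Q_d$ in which an edge $\{x,y\}$ receives the index $i$ of the unique coordinate where $x$ and $y$ differ, and restrict it to $G$. The key observation is that traversing an edge of color $i$ flips the $i$-th coordinate, so along any walk from $u$ to $v$ the parity of the number of color-$i$ edges equals $u_i+v_i \pmod 2$. Since a path has distinct endpoints, some coordinate differs and yields an odd count, giving (i); a cycle returns to its start, so every color count is even, giving (ii).

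For sufficiency, I would assume $G$ is connected (the disconnected case reduces to this one component at a time). Fix a base vertex $v_0$ and define $\phi\colon V(G)\to\{0,1\}^d$ by letting $\phi(v)_i$ be the parity of the number of color-$i$ edges on any chosen path from $v_0$ to $v$. Once $\phi$ is well-defined, the embedding property is immediate: an edge $uv$ of color $c$ extends a path from $v_0$ to $u$ into a walk from $v_0$ to $v$, so $\phi(v)$ and $\phi(u)$ differ in exactly the $c$-th coordinate and are therefore adjacent in $Q_d$. Injectivity uses (i): if $\phi(u)=\phi(v)$ with $u\neq v$, then any path $P$ from $u$ to $v$ has parity vector equal to $\phi(v)-\phi(u)=0$, contradicting the existence of a color appearing an odd number of times on $P$.

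The main obstacle is showing well-definedness of $\phi$, for which I would prove that every closed walk $W$ in $G$ uses each color an even number of times. The argument passes through the cycle space of $G$ over $\mathbb{F}_2$: the edge set $C_W=\{e:e\text{ appears an odd number of times in }W\}$ has even degree at every vertex (since $W$ is closed), hence lies in the cycle space and can be written as a symmetric difference of simple cycles. The number of color-$c$ edges of $W$ is then congruent modulo $2$ to the number of color-$c$ edges in $C_W$, which by additivity and condition (ii) is even. Applied to the closed walk obtained by concatenating two paths from $v_0$ to $v$ (one of them reversed), this shows that $\phi(v)$ does not depend on the chosen path, completing the proof.
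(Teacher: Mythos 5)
The paper itself gives no proof of this proposition --- it is quoted from Havel and Mor\'avek (via Harary's survey) --- so there is no in-paper argument to compare against. Your proof of the connected case is correct and is the standard one: necessity via the canonical coordinate coloring of $Q_d$ and the parity identity for walks, and sufficiency by defining $\phi(v)$ as the parity vector of a $v_0$--$v$ path, with well-definedness reduced through the $\mathbb{F}_2$ cycle space to condition (ii), adjacency preservation from the single coordinate flip, and injectivity from condition (i). (A side remark: you never actually use properness of the coloring in the sufficiency direction, and you do not need to --- condition (i) applied to paths of length two already forces the coloring to be proper.)

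The one step that does not hold up is the parenthetical claim that ``the disconnected case reduces to this one component at a time.'' Embedding each component separately into $Q_d$ does not yield an embedding of $G$, since the images need not be disjoint, and in fact the stated equivalence is false for disconnected graphs: $2K_2$ with both edges colored $1$ satisfies (i) and (ii) for $d=1$ but is not a subgraph of $Q_1$, and two disjoint canonically colored copies of $Q_d$ satisfy the conditions yet have $2^{d+1}$ vertices. The proposition should be read, as in the original Havel--Mor\'avek statement, with $G$ connected; the paper only ever applies it to connected configurations inside $Q_d$, so nothing downstream is affected, but your write-up should either add the connectivity hypothesis or drop the claimed reduction.
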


	A {\em dimensional matching} $M$ of $Q_d$ is a perfect matching of $Q_d$
	such that $Q_d- M$ is isomorphic to two copies of $Q_{d-1}$; evidently
	there are precisely $d$ dimensional matchings in $Q_d$.
	We shall need the following easy lemma.
	
	\begin{lemma}
	\label{lem:matchings}
		Let $d \geq 2$ be an integer.
		Then there are $d$ different dimensional matchings in $Q_d$;
		indeed $Q_d$ decomposes into $d$ such perfect matchings.
	\end{lemma}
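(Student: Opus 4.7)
The plan is to exhibit $d$ dimensional matchings explicitly, verify they partition the edges, and then argue no others exist. Identifying $V(Q_d)$ with $\{0,1\}^d$, for each $i \in \{1,\dots,d\}$ let $M_i$ denote the set of edges whose endpoints differ precisely in coordinate $i$. Each $M_i$ is plainly a perfect matching, and $Q_d - M_i$ splits into the subgraphs induced by $\{v : v_i = 0\}$ and $\{v : v_i = 1\}$, both canonically isomorphic to $Q_{d-1}$; so every $M_i$ is a dimensional matching. Since each edge of $Q_d$ belongs to exactly one $M_i$ (namely the coordinate in which its endpoints differ), the $M_i$ are pairwise disjoint; together with $\sum_i |M_i| = d \cdot 2^{d-1} = |E(Q_d)|$ this shows $M_1,\dots,M_d$ partition $E(Q_d)$, giving the decomposition and producing $d$ distinct dimensional matchings.

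For uniqueness, let $M$ be any dimensional matching and let $A,B$ be the two components of $Q_d - M$, each isomorphic to $Q_{d-1}$. Every $v \in A$ has degree $d-1$ inside $A$, so has a unique $M$-neighbor in $B$, which flips some coordinate $\phi(v) \in \{1,\dots,d\}$. The heart of the proof is to show $\phi$ is constant on $A$: since $A$ is connected, it suffices to verify $\phi(u) = \phi(v)$ whenever $u,v \in A$ are adjacent in $Q_d$. Once this is granted, every edge of $M$ flips one fixed coordinate $i$, so $M \subseteq M_i$ and $M = M_i$ by cardinality.

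The main obstacle is the constancy of $\phi$, which I plan to handle by a short 4-cycle chase. Suppose toward a contradiction that $u,v \in A$ are adjacent with $u$ obtained from $v$ by flipping coordinate $j$, and $\phi(v) = i \neq k = \phi(u)$; necessarily $i,k \neq j$, for otherwise the $M$-partner of $v$ or $u$ would coincide with a vertex of $A$. Since $i \neq k = \phi(u)$, the neighbor of $u$ obtained by flipping coordinate $i$ — call it $u'$ — lies in $A$. On the other hand, the $M$-partner $v'$ of $v$ lies in $B$ and differs from $u'$ precisely in coordinate $j$, so $u'v'$ is an edge between $A$ and $B$ and must belong to $M$. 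This forces $v'$ to be the $M$-partner of both $v$ and $u'$, contradicting that $M$ is a matching, since $v \neq u'$. Constancy of $\phi$, and hence the lemma, follows.
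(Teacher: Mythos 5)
The paper gives no proof of this lemma (``The proof is left to the reader''), so there is nothing to compare against; your construction of the coordinate matchings $M_1,\dots,M_d$, the verification that each is a dimensional matching, and the observation that they partition $E(Q_d)$ are all correct, and this existence-plus-decomposition half is the only part of the lemma the paper actually invokes later.

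Your uniqueness argument, however, has one load-bearing step that does not follow from the reason you give. You write that since $v\in A$ has degree $d-1$ inside $A$, its unique $M$-neighbour lies in $B$. The degree count only shows that exactly one edge of $Q_d$ at $v$ is absent from the component $A$, namely its $M$-edge; it does not rule out that this $M$-edge ends at another vertex of $V(A)$ that happens not to be an $A$-neighbour of $v$. Equivalently, you are implicitly assuming that a subgraph of $Q_d$ isomorphic to $Q_{d-1}$ on half the vertices is an \emph{induced} subgraph, i.e.\ that no edge of $M$ has both endpoints in one component. This assumption is used essentially in your $4$-cycle chase: if $v'$ could lie in $A$, the edge $u'v'$ need not belong to $M$ and no contradiction results. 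The claim is true but needs an argument --- for instance, the edge-isoperimetric inequality for the cube shows a set of $2^{d-1}$ vertices spans at most $(d-1)2^{d-2}$ edges, which is already attained by the copy of $Q_{d-1}$, leaving no room for a chord; alternatively, a count of $4$-cycles at a vertex of a $Q_{d-1}$-subgraph $H$ shows that any $4$-cycle of $Q_d$ whose two edges at some vertex lie in $H$ lies entirely in $H$, which kills any chord because a chord of $H$ closes a $4$-cycle with three $H$-edges. With either patch your proof of uniqueness (and hence of the stronger ``precisely $d$'' assertion made after the definition) goes through; without it, only the first half of your argument is complete.
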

	The proof is left to the reader. 
	
	Intuitively, the colors in
	the proper edge coloring in Proposition \ref{prop:HavelMoravek}
	correspond to dimensional matchings in $Q_d$ (as pointed out in \cite{Harary}).
	In particular, Proposition \ref{prop:HavelMoravek} holds if we take the
	dimensional matchings as the colors. Furthermore we have the following.

	\begin{lemma}
	\label{lem:DimMathInduce}
		The subgraph induced by $r$ dimensional matchings
		in $Q_d$ is isomorphic to a disjoint union of
		$r$-dimensional hypercubes.
	\end{lemma}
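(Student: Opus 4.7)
The plan is to give the argument by identifying the vertex set of $Q_d$ with $\{0,1\}^d$ and showing that each dimensional matching corresponds to flipping a fixed coordinate.

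First I would recall the standard description of $Q_d$: its vertices are the binary strings in $\{0,1\}^d$, and two vertices are adjacent if and only if they differ in exactly one coordinate. For each $i \in \{1,\dots,d\}$, let $M_i$ denote the set of edges whose endpoints differ precisely in the $i$-th coordinate. Each $M_i$ is a perfect matching, and $Q_d - M_i$ consists of the two copies of $Q_{d-1}$ obtained by fixing the $i$-th coordinate to $0$ or to $1$; so each $M_i$ is a dimensional matching. By Lemma \ref{lem:matchings} there are exactly $d$ dimensional matchings, so these must be all of them.

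Now let $S \subseteq \{1,\dots,d\}$ with $|S| = r$, and let $H$ be the subgraph of $Q_d$ with vertex set $V(Q_d)$ and edge set $\bigcup_{i \in S} M_i$. By construction, $uv \in E(H)$ if and only if $u$ and $v$ differ in exactly one coordinate belonging to $S$. Define an equivalence relation on $V(Q_d)$ by $u \sim v$ if and only if $u$ and $v$ agree on every coordinate in $\{1,\dots,d\} \setminus S$. Every edge of $H$ joins two equivalent vertices (so each equivalence class is a union of components of $H$), and there are $2^{d-r}$ equivalence classes, each of size $2^r$.

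Finally, fix an equivalence class $C$ and let $\pi_S : C \to \{0,1\}^S$ be the projection onto the coordinates indexed by $S$; this is a bijection. Two vertices $u,v \in C$ are adjacent in $H$ iff they differ in exactly one coordinate of $S$, which happens iff $\pi_S(u)$ and $\pi_S(v)$ differ in exactly one coordinate of $\{0,1\}^S$. Hence $\pi_S$ is a graph isomorphism from $H[C]$ onto $Q_r$, and $H$ is the disjoint union of the $2^{d-r}$ copies of $Q_r$ obtained in this way. There is no real obstacle here; the only thing to pin down carefully is the identification of dimensional matchings with coordinate flips, after which the component decomposition is immediate from the definitions.
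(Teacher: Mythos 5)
Your proof is correct and complete: identifying the dimensional matchings with the $d$ coordinate directions and then decomposing the edge-induced subgraph along the equivalence classes that fix the coordinates outside $S$ is exactly the intended argument. The paper itself offers no proof of Lemma \ref{lem:DimMathInduce} (it is stated as a ``simple observation''), so there is nothing to compare against beyond noting that your write-up supplies the standard coordinate-flip justification the authors implicitly rely on.
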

	
	This simple observation shall be used quite frequently below.
	
	We shall also need some standard definitions
	on list edge coloring.
	Given a graph $G$, assign to each edge $e$ of $G$ a set
	$\mathcal{L}(e)$ of colors.
	Such an assignment $\mathcal{L}$ is called
	a \emph{list assignment} for $G$ and
	the sets $\mathcal{L}(e)$ are referred
	to as \emph{lists} or \emph{color lists}.
	If all lists have equal size $k$, then $\mathcal{L}$
	is called a \emph{$k$-list assignment}.
	Usually, we seek a proper
	edge coloring $\varphi$ of $G$,
	such that $\varphi(e) \in \mathcal{L}(e)$ for all
	$e \in E(G)$. If such a coloring $\varphi$ exists then
	$G$ is \emph{$\mathcal{L}$-colorable} and $\varphi$
	is called an \emph{$\mathcal{L}$-coloring}. 
	Denote by $\chi'_L(G)$ the minimum integer $t$
	such that $G$ is $\mathcal{L}$-colorable
	whenever $\mathcal{L}$ is a $t$-list assignment.
	A fundamental result in list edge coloring theory
	is the following theorem by Galvin
	\cite{Galvin}. As usual, $\chi'(G)$ denotes the chromatic
	index of a multigraph $G$.

\begin{theorem}
\label{th:Galvin}
	For any bipartite multigraph $G$, $\chi_L'(G) = \chi'(G)$.
\end{theorem}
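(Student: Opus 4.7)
The plan is to prove Galvin's theorem by the kernel method that he devised for exactly this purpose. Since $\chi'(G) \le \chi_L'(G)$ is immediate from constant lists, the content is to establish $\chi_L'(G) \le \Delta$, where $\Delta := \chi'(G)$, for a bipartite multigraph $G$ with bipartition $(A,B)$. The central tool is the kernel lemma of Bondy, Boppana and Siegel: if $D$ is an orientation of a graph $H$ such that every induced subdigraph of $D$ has a \emph{kernel} (an independent set $S$ in which every vertex outside $S$ has an out-neighbour in $S$), then $H$ is $\mathcal{L}$-colorable whenever $|\mathcal{L}(v)| \ge d_D^+(v)+1$ for every $v$. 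I would prove this lemma by induction on $|V(H)|$: pick any color $c$ that appears in some list, take a kernel $S$ of the subdigraph induced on $\{v : c \in \mathcal{L}(v)\}$, color $S$ with $c$, and recurse on $H-S$ with $c$ deleted from every remaining list.

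Second, I would fix a proper $\Delta$-edge coloring $\kappa$ of $G$ (which exists by definition of $\Delta$) and orient $L(G)$ as follows: for adjacent edges $e,e'$ of $G$ sharing an endpoint $v$, set $e \to e'$ in $L(G)$ if either $v \in A$ and $\kappa(e) < \kappa(e')$, or $v \in B$ and $\kappa(e) > \kappa(e')$. For an edge $e = ab$ with $\kappa(e)=i$, the out-arcs from $e$ point to edges of color larger than $i$ at $a$ (at most $\Delta - i$ of them) and edges of color smaller than $i$ at $b$ (at most $i-1$), so every vertex of this oriented line graph has out-degree at most $\Delta - 1$. Thus constant lists of size $\Delta$ will suffice, provided the orientation is kernel-perfect.

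The main obstacle is verifying kernel-perfectness. An induced subdigraph of $L(G)$ corresponds to an arbitrary edge subset $E' \subseteq E(G)$, and a kernel is a matching $M \subseteq E'$ that absorbs every edge of $E' \setminus M$. On the bipartite multigraph $(A,B,E')$ I would declare preferences from $\kappa$: vertices in $A$ prefer edges of higher color, vertices in $B$ prefer edges of lower color. The Gale-Shapley algorithm then produces a stable matching $M \subseteq E'$. For any $e = ab \in E' \setminus M$, stability yields either an $M$-edge $e_a$ incident with $a$ satisfying $\kappa(e_a) > \kappa(e)$, giving $e \to e_a$, or an $M$-edge $e_b$ incident with $b$ satisfying $\kappa(e_b) < \kappa(e)$, giving $e \to e_b$. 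Either way $M$ absorbs $e$, so $M$ is a kernel, and applying the kernel lemma to any list assignment of size $\Delta$ finishes the proof.
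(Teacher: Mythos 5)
Your proposal is a correct account of Galvin's kernel-method proof: the Bondy--Boppana--Siegel kernel lemma, the orientation of the line graph induced by a fixed proper $\Delta$-edge-coloring with out-degree at most $\Delta-1$, and kernel-perfectness via Gale--Shapley stable matchings all check out (including the multigraph subtleties, since a proper coloring makes the preference orders strict and parallel edges contribute to the out-degree count only once each). The paper, however, does not prove this statement at all --- it is quoted as a known result with a citation to Galvin's original article --- so there is nothing in the paper to compare against; your argument is essentially Galvin's own, and it is sound.
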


\section{Extending edge precolorings of hypercubes}

We begin this section by giving a short proof of the
following theorem, thereby establishing an analogue
for hypercubes to the positive solution of the Evans' conjecture.

	\begin{theorem}
	\label{th:hypercube}
		Let $d \geq 2$ be a positive integer.
		If $\varphi$ is an edge precoloring of
		at most $d-1$ edges of the hypercube $Q_d$,
		then $\varphi$ can be extended to a proper $d$-edge coloring
		of $Q_d$.
	\end{theorem}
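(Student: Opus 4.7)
The plan is to prove the theorem by induction on $d$, using the dimensional decomposition provided by Lemmas \ref{lem:matchings} and \ref{lem:DimMathInduce}. The base case $d=2$ is immediate: $Q_2$ is a $4$-cycle, and any precoloring of at most one edge extends uniquely to one of the two proper $2$-edge colorings.

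For the inductive step, assume $d\ge 3$ and the statement holds for $Q_{d-1}$. Given a precoloring $\varphi$ of at most $d-1$ edges of $Q_d$, I would make two observations. First, since each precolored edge lies along exactly one of the $d$ coordinate directions, some coordinate $\ell$ has no precolored edge along it; so by Lemma \ref{lem:matchings} the dimensional matching $M_\ell$ contains no $\varphi$-precolored edge. Second, $\varphi$ uses at most $d-1$ colors, so some color $c\in\{1,\dots,d\}$ is unused. The natural move is now to color every edge of $M_\ell$ with $c$ and to apply the inductive hypothesis to each of the two copies $H_0,H_1\cong Q_{d-1}$ of $Q_{d-1}$ obtained from $Q_d-M_\ell$, using the remaining $d-1$ colors. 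This succeeds whenever each $H_i$ contains at most $d-2$ precolored edges.

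The main obstacle is the \emph{concentrated case}, in which all $d-1$ precolored edges happen to lie in a single $H_i$, say $H_0$, for every admissible choice of $\ell$. A short argument then shows that this forces the precolored edges to sit inside a common proper subcube $S\cong Q_k$ with $k\leq d-1$, because their endpoints must agree on every coordinate that carries no precolored edge. To handle this case I would switch strategies: I would look for a coordinate $\ell'$ that \emph{is} used by $\varphi$ such that all precolored edges lying in $M_{\ell'}$ share a common color $c'$ which appears nowhere else in $\varphi$. Coloring $M_{\ell'}$ with $c'$ and splitting along $M_{\ell'}$ would again produce two copies of $Q_{d-1}$, each carrying at most $d-2$ precolored edges (since $M_{\ell'}$ itself accounts for at least one), and induction would complete the proof.

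The hardest point is therefore to guarantee that such a pair $(\ell',c')$ always exists in the concentrated case. When it does not (for instance when every color class of $\varphi$ is monochromatic but spread across two coordinates of $S$), I would fall back on the extra coordinates in $Q_d\setminus S$: starting from the standard dimensional $d$-edge coloring of $Q_d$, each precolored edge whose current color disagrees with $\varphi$ can be corrected by a Kempe swap on a $(i,c_e)$-colored $4$-cycle, and Lemma \ref{lem:DimMathInduce} guarantees that such alternating $4$-cycles are readily available; the abundance of coordinates outside $S$ is what allows a sequence of pairwise compatible swaps to realign all precolored edges simultaneously. Making this last step precise is the technical heart of the argument.
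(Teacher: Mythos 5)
Your overall skeleton agrees with the paper's: induct on $d$, split $Q_d$ along a dimensional matching $M_\ell$ that contains no precolored edge (such an $M_\ell$ exists because there are at most $d-1$ precolored edges and $d$ dimensional matchings), and observe that the easy case is when each half $H_i\cong Q_{d-1}$ carries at most $d-2$ precolored edges. You also correctly isolate the one hard case, namely when all $d-1$ precolored edges land in a single half for every admissible $\ell$. The gap is that you do not actually resolve that case. Your first device --- finding a used direction $\ell'$ all of whose precolored edges carry a common color $c'$ occurring nowhere else --- provably does not always exist: take $d=5$ and precolor the four edges of a $4$-cycle lying in directions $1$ and $2$ (coordinates $3,4,5$ fixed) with the four distinct colors $1,2,3,4$ in cyclic order. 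This is a proper precoloring of $d-1$ edges, it is concentrated with respect to every unused coordinate, and each of the two used directions carries two precolored edges of different colors, so no pair $(\ell',c')$ exists. Your second device, realigning the standard dimensional coloring by a sequence of ``pairwise compatible'' Kempe swaps on bicolored $4$-cycles, is exactly the step you admit you cannot make precise; after one swap the bicolored components are no longer $4$-cycles, swaps at a common vertex can interact, and no argument is given that a compatible sequence exists. As written, the concentrated case is therefore open, and the proof is incomplete.

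For comparison, the paper closes this case without any case analysis on the structure of the precolored edges. Say all $d-1$ precolored edges lie in $H_1$ and color $1$ is used by $\varphi$. Erase color $1$ from every precolored edge, leaving at most $d-2$ precolored edges; by induction extend this reduced precoloring to a proper $(d-1)$-edge coloring of $H_1$ with colors $2,\dots,d$; then restore color $1$ on the originally $1$-colored edges (still proper, since those edges form a matching and color $1$ is otherwise absent). Copy the resulting $d$-edge coloring of $H_1$ to $H_2$ via the canonical isomorphism, and give each edge of $M_\ell$ the unique color of $\{1,\dots,d\}$ missing at its endpoints --- the two endpoints miss the same color precisely because the two halves were colored identically. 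You may want to adopt this ``erase one color class, extend, restore, mirror'' step in place of your two fallback devices; it slots directly into your framework and completes the induction.
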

	\begin{proof}
		The proof is by induction on $d$. For $d=2$, the statement
		is straightforward.
		
		Suppose that $d > 2$ and that the theorem holds for $Q_{d-1}$.
		Let $\varphi$ be an edge precoloring of at most $d-1$ edges
		of $Q_d$.
		By Lemma \ref{lem:matchings}, $Q_d$ has $d$ perfect matchings
		$M$ such that $Q_d - M$ is the disjoint union of two copies
		of $Q_{d-1}$. Since at most $d-1$ edges of $Q_d$ are precolored,
		there is such a perfect matching $\hat M$ satisfying that
		no edge of $\hat M$ is precolored. 
		Let $H_1$ and $H_2$ be the components of
		$Q_d - \hat M$. We distinguish between two different cases.
		
		\bigskip
		
		\noindent
		{\bf Case 1.} {\em $H_1$ has at least $1$ and at most $d-2$
		precolored edges:}
		
		\medskip
		
		\noindent
		Without loss of generality we assume that 
		the precoloring of $Q_d$ uses colors $1,\dots, d-1$.
		Since $H_1$
		contains  at most $d-2$ precolored edges, there is,
		by the induction hypothesis, a proper 
		$(d-1)$-edge coloring $\varphi_1$ of $H_1$ which 
		is an extension of the restriction of $\varphi$ to $H_1$.
		Similarly, there is a proper $(d-1)$-edge coloring $\varphi_2$ of
		$H_2$ which is an extension of the restriction
		of $\varphi$ to $H_2$. 
		By coloring the edges of $\hat M$ with color $d$,
		we obtain a proper $d$-edge coloring of $Q_d$.

		\bigskip

		\noindent 
		{\bf Case 2.} {\em $H_1$ has exactly $d-1$ precolored edges:}
		
		\medskip
		
		\noindent
		Without loss of generality we assume that at least one edge in
		$Q_d$ is precolored with color $1$.
		Define a new edge precoloring $\varphi'$ of $Q_d$ by removing color $1$
		from any precolored edge of $Q_d$ that is colored $1$. 
		By the induction hypothesis,
		there is a proper $(d-1)$-edge coloring $\varphi'_1$
		of $H_1$
		using colors $2,3,\dots, d$ which is an extension of $\varphi'$.
		From $\varphi'_1$ we define a new proper edge coloring $\varphi_1$
		of $H_1$
		by setting $\varphi_1(e)=1$ for every edge $e$ with $\varphi(e)=1$,
		and retaining the color of every other edge of $H_1$.
		Then $\varphi_1$ is an extension of $\varphi$ on the graph
		$H_1$.
		
		Let $\varphi_2$ be an edge coloring of $H_2$ obtained
		by coloring every edge of $H_2$
		with the color of  the corresponding edge of $H_1$ 
		under $\varphi_1$.\footnote{Here, and in the following, two edges of
		$H_1$ and $H_2$ are {\em corresponding} if their endpoints are joined
		by two edges of $M$. Similarly, two vertices are {\em corresponding}
		if they are joined by an edge of $M$.}
		Now, for any vertex $v$ of $H_1$, if color
		$t$ does not appear on an edge incident
		to $v$, $1 \leq t \leq d$, then color $t$ does not appear on any
		edge incident to the corresponding vertex of $H_2$.
		Thus we may extend $\varphi_1$ and $\varphi_2$ to a proper edge
		coloring $\psi$ of $Q_d$ by, for any edge $e$ of $\hat M$, coloring
		$e$ with the color in $\{1,2, \dots, d\}$ 
		not appearing on any edge incident to 
		one of its endpoints. Clearly, $\psi$ is an extension of $\varphi$.

		\bigskip
		By symmetry, it suffices to consider the two different cases above.
		Hence, the theorem follows.	
	\end{proof}

	Ryser \cite{Ryser} proved a necessary and sufficient condition for
	an $n \times n$ partial Latin square where all non-empty cells
	lie in a completely filled $r \times s$ subrectangle
	to be completable. In particular, his result implies that
	any $n \times n$ partial Latin square, where all non-empty
	cells lie within an $\lfloor n/2 \rfloor \times \lfloor n/2 \rfloor$
	subarray is completable.
	We note the following analogue for hypercubes:
	
	\begin{proposition}
	\label{prop:MHallhypercube}
		If $\varphi$ is a proper $d$-edge coloring of $Q_r \subseteq Q_d$,
		then $\varphi$ can be extended to a proper edge coloring
		of $Q_d$.
	\end{proposition}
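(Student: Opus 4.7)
The plan is to exploit the Cartesian product decomposition $Q_d \cong Q_r \square Q_{d-r}$, which lets one view the precolored $Q_r$ as a single ``layer'' of a richer product structure. By the vertex-transitivity of $Q_d$, we may assume that the copy of $Q_r$ carrying $\varphi$ is the layer $Q_r \times \{v_0\}$ for some fixed $v_0 \in V(Q_{d-r})$. Under this decomposition, the edges of $Q_d$ partition naturally into \emph{horizontal} edges, lying in some layer $Q_r \times \{v\}$, and \emph{vertical} edges, lying in some fiber $\{u\} \times Q_{d-r}$ (which is itself a copy of $Q_{d-r}$).

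The first step is to copy $\varphi$ to every horizontal layer: define $\psi\bigl((u,v)(u',v)\bigr) := \varphi(uu')$ for every $v \in V(Q_{d-r})$ and every $uu' \in E(Q_r)$. Because $Q_r$ is $r$-regular and $\varphi$ is a proper coloring from $\{1,\dots,d\}$, the set $C(u)$ of colors appearing at $u$ under $\varphi$ has size exactly $r$. So for each $u$, the $d-r$ colors of $M(u) := \{1,\dots,d\} \setminus C(u)$ remain available for use on the vertical edges sitting over $u$.

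The second step is to color each fiber $\{u\} \times Q_{d-r}$ properly using precisely the colors in $M(u)$. This is possible because $Q_{d-r}$ is $(d-r)$-regular bipartite, so $\chi'(Q_{d-r}) = d-r = |M(u)|$; equivalently, Lemma \ref{lem:matchings} decomposes $Q_{d-r}$ into $d-r$ perfect matchings, which can be assigned the colors in $M(u)$ in any order.

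Verification is immediate: at any vertex $(u,v)$, the $r$ horizontal edges receive pairwise distinct colors in $C(u)$ and the $d-r$ vertical edges receive pairwise distinct colors in $M(u)$, with $C(u) \cap M(u) = \varnothing$. Hence the combined coloring is a proper $d$-edge coloring of $Q_d$ that extends $\varphi$. There is no real obstacle here; the one point worth noting is that $\varphi$ need not be a dimensional coloring, so Proposition \ref{prop:HavelMoravek} is not available, but the argument never requires this structure---only that a proper edge coloring of an $r$-regular graph uses exactly $r$ colors at each vertex.
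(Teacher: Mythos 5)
Your argument is correct, and since the paper explicitly omits the proof of Proposition~\ref{prop:MHallhypercube}, there is nothing to contrast it with; the layered Cartesian-product argument you give (copy $\varphi$ to every layer $Q_r\times\{v\}$, then color each fiber $\{u\}\times Q_{d-r}$ with the $d-r$ colors missing at $u$) is the natural proof and works exactly as you describe. The only cosmetic point is that ``vertex-transitivity'' is not quite the right justification for normalizing the position of $Q_r$ --- what you actually need is that every subgraph of $Q_d$ isomorphic to $Q_r$ is a coordinate subcube (a layer of some decomposition $Q_d\cong Q_r\,\square\,Q_{d-r}$), which is standard and consistent with how the paper treats subhypercubes via dimensional matchings.
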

	 We provide a brief sketch of the proof.
	\begin{proof}[Proof (sketch).]
	Evidently, $Q_r$ is a component of the subgraph of $Q_d$ induced by exactly
	$r$ dimensional matchings in $Q_d$. It suffices to prove that
	if $Q_{r+1}$ is a hypercube of dimension $r+1$ which is contained
	in $Q_d$, and which contains $Q_r$, then there is a proper $d$-edge 
	coloring of $Q_d$ that agrees with $\varphi$. 
	However, such a graph $Q_{r+1}$ consists of
	two copies of $Q_r$ and a dimensional matching joining corresponding
	vertices of the two copies of $Q_r$. We may thus obtain a proper
	$d$-edge coloring of $Q_{r+1}$ as in the proof of the preceding
	theorem. 
	\end{proof}
	If we do not insist that all edges in a subgraph of $Q_d$ isomorphic to
	$Q_r$ have to be precolored, then we have the following.
	
	\begin{corollary}
	\label{cor:Ryser}
		If $r \leq d/2$, then
		any partial proper edge coloring of 
		$Q_r \subseteq Q_d$ 
		with colors $1,\dots,d$
		can be extended to a proper $d$-edge coloring of $Q_d$.
	\end{corollary}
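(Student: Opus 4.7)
The plan is to reduce to Proposition~\ref{prop:MHallhypercube}: first extend $\varphi$ to a complete proper edge coloring $\psi$ of $Q_r$ using colors from $\{1,\dots,d\}$, and then invoke Proposition~\ref{prop:MHallhypercube} (whose hypothesis $d \geq r$ is subsumed by $d \geq 2r$) to extend $\psi$ to a proper $d$-edge coloring of $Q_d$.

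To carry out the completion step I would set up a list edge coloring problem on the subgraph $H$ of $Q_r$ consisting of the uncolored edges. For each vertex $x \in V(Q_r)$ let $F(x) \subseteq \{1,\dots,d\}$ be the set of colors used by $\varphi$ on the edges incident to $x$, and for each $e = uv \in E(H)$ set $L(e) = \{1,\dots,d\} \setminus (F(u) \cup F(v))$. Since $Q_r$ is $r$-regular and $\varphi$ is proper, one has $|F(x)| = r - d_H(x)$, so
\[
|L(e)| \;\geq\; d - 2r + d_H(u) + d_H(v) \;\geq\; \max(d_H(u), d_H(v)),
\]
where the last inequality uses $d \geq 2r$ together with $d_H(u), d_H(v) \geq 1$ for $e \in E(H)$. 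Invoking a local version of Galvin's Theorem for bipartite graphs (due to Borodin, Kostochka, and Woodall), which produces a proper $L$-edge coloring whenever $|L(uv)| \geq \max(d(u), d(v))$ holds at every edge, then yields an $L$-coloring of $H$; pasting it together with $\varphi$ gives the sought $\psi$.

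The main obstacle is exactly this list edge coloring step. The uniform form of Galvin's Theorem recorded as Theorem~\ref{th:Galvin} would demand $|L(e)| \geq \Delta(H)$ throughout, and this can genuinely fail at the sharp threshold $d = 2r$: when $\varphi$ nearly saturates both endpoints of an uncolored edge $uv$, the list $L(uv)$ can have size as small as $2$, while $\Delta(H)$ may still be as large as $r$. The local strengthening, in which the global bound $\Delta(H)$ is replaced by the edge-local quantity $\max(d_H(u), d_H(v))$, matches precisely the lower bound on $|L(e)|$ derived above and is what forces the argument to work exactly at the condition $r \leq d/2$.
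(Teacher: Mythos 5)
Your proof is correct. The paper states Corollary~\ref{cor:Ryser} without any proof, so there is nothing to compare against line by line; but the reduction you use --- first complete the partial coloring to a proper edge coloring of all of $Q_r$ with colors from $\{1,\dots,d\}$, then invoke Proposition~\ref{prop:MHallhypercube} --- is surely the intended route, and your list-size computation $|L(e)|\geq d-2r+d_H(u)+d_H(v)\geq \max(d_H(u),d_H(v))+1$ is accurate. The one substantive comment is that the completion step needs neither the Borodin--Kostochka--Woodall local refinement nor any list-edge-coloring machinery at all: every edge of $Q_r$ is adjacent to exactly $2(r-1)=2r-2$ other edges, so when $d\geq 2r$ a greedy coloring of the uncolored edges, in any order, always finds at least two admissible colors and never gets stuck. (This elementary count also matches the sharpness example given after the corollary: in $Q_{d/2+1}$ an edge is adjacent to $d$ others, and the obstruction exhibited there is exactly one uncolored edge seeing all $d$ colors.) So your diagnosis that the uniform hypothesis $|L(e)|\geq\Delta(H)$ of Theorem~\ref{th:Galvin} can fail near the threshold is correct but moot, and the appeal to an external local list-coloring theorem, while sound, can be replaced by a two-line greedy observation.
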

	
	\begin{proof}
	It suffices to prove that there is a proper $d$-edge coloring
	of $Q_r$ that agrees with the given partial edge coloring $\varphi$
	of $Q_r$; invoking Proposition \ref{prop:MHallhypercube} then yields
	the desired result. Since $r \leq d/2$, such a proper $d$-edge coloring
	can be obtained by greedily coloring the uncolored edges of $Q_r$.
	\end{proof}

	Note that the bound on $r$ is sharp, since there is a partial proper
	edge coloring of
	$Q_{d/2+1}$ with colors $1,\dots, d$ that cannot be extended to a proper
	$d$-edge coloring of $Q_d$: Let $uv$ be an edge of $Q_{d/2+1}$
	and color the edges incident with $u$ and distinct from $uv$ by colors
	$1,\dots, d/2$, respectively; 
	color the edges incident with $v$ and distinct from
	$uv$ by colors $d/2+1, \dots, d$, respectively. The resulting partial
	edge coloring can clearly not be extended to a proper $d$-edge coloring
	of $Q_d$.

	Our next result establishes an analogue for hypercubes of the characterization
	of Browning et al. \cite{BrowVojtWanless}
	of when a partial Latin square, the non-empty cells of which
	constitute two Latin subsquares,
	is completable.
	
	\begin{theorem}
	\label{th:TwoCubes}
		Let $Q_{k_1}$ and $O_{k_2}$ be two hypercubes
		of dimension $k_1$ and $k_2$, respectively,
		contained in
		a $d$-dimensional hypercube $Q_d$, and let $f$ be a proper
		edge coloring of $Q_{k_1} \cup O_{k_2}$ such that the restriction of
		$f$ to $Q_{k_1}$ $(O_{k_2})$
		is a proper edge coloring using $k_1$ $(k_2)$
		colors $A_1$ $(A_2)$ from  
		$\{1,\dots,d\}$.
		Then the coloring $f$ is extendable to a proper $d$-edge coloring
		of $Q_d$ unless $Q_{k_1}$ and $O_{k_2}$ are disjoint,
		a vertex of $Q_{k_1}$ is adjacent to a vertex of $O_{k_2}$,
		and $d \leq |A_1 \cup A_2|$.
		\end{theorem}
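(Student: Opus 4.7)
The non-extendability direction is immediate: if $Q_{k_1}$ and $O_{k_2}$ are disjoint, $u \in V(Q_{k_1})$ is adjacent to some $v \in V(O_{k_2})$, and $A_1 \cup A_2 = \{1,\dots,d\}$, then all $k_1$ colors of $A_1$ already appear on the edges of $Q_{k_1}$ at $u$, all $k_2$ colors of $A_2$ appear at $v$, and the edge $uv$ has no admissible color in $\{1,\dots,d\}$.

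For the extendability direction, my plan is to induct on $d$, splitting $Q_d$ along a well-chosen dimensional matching as in the proof of Theorem~\ref{th:hypercube}. Let $D_i \subseteq \{1,\dots,d\}$ denote the set of free coordinates of the $i$-th subcube. The first case is $D_1 \cup D_2 \neq \{1,\dots,d\}$: pick $j \notin D_1 \cup D_2$, so that the dimensional matching $M_j$ contains no precolored edge and $Q_d - M_j$ decomposes into two copies $H^0, H^1$ of $Q_{d-1}$, each of which contains either one or both of the subcubes. If both subcubes lie in the same copy, apply the inductive hypothesis inside that copy, transfer the resulting $(d-1)$-coloring to the other copy via Lemma~\ref{lem:DimMathInduce}, and color $M_j$ with the missing color. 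If the two subcubes lie in distinct copies, extend each restriction separately using Proposition~\ref{prop:MHallhypercube}, choosing a common $(d-1)$-palette; the extra color is then assigned to $M_j$. The palette reservation uses $|A_1 \cup A_2| \leq d-1$; the complementary situation $|A_1 \cup A_2| = d$ with the subcubes in different halves forces adjacency and is precisely the ``unless'' case handled above.

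The main obstacle is the remaining case $D_1 \cup D_2 = \{1,\dots,d\}$. Here no coordinate is fixed by both subcubes, so $Q_{k_1}$ and $O_{k_2}$ must intersect in a subcube of dimension $k_1 + k_2 - d$, and the ``unless'' hypothesis is automatically avoided. I would pick $j \in D_1 \setminus D_2$, swapping the roles of the two subcubes when this set is empty, and invoking Proposition~\ref{prop:MHallhypercube} when one subcube contains the other. Splitting along $M_j$ cuts $Q_{k_1}$ into two $(k_1-1)$-dimensional halves sitting in different copies of $Q_{d-1}$, while $O_{k_2}$ remains intact in one of them; the inductive hypothesis then applies on each side. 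The delicate step is that the precolored edges of $Q_{k_1} \cap M_j$ may carry distinct colors under $f$, because a proper $k_1$-edge coloring of $Q_{k_1}$ need not be ``dimensional''. My plan is to pre-process $f|_{Q_{k_1}}$ via a sequence of $(a,b)$-Kempe swaps internal to $Q_{k_1}$, chosen so as to leave $f|_{O_{k_2}}$ unchanged on the intersection, until one color class of $f|_{Q_{k_1}}$ coincides with the $j$-dimensional matching of $Q_{k_1}$; once this normalization is achieved, $M_j$ is monochromatically colored and the two halves are completed by the inductive hypothesis. As a fallback, Theorem~\ref{th:Galvin} applied to $Q_d$ minus the precolored subgraph with the induced list assignment can be used to verify the necessary list sizes under the $|A_1 \cup A_2| < d$ assumption.
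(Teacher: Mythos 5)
Your proposal has a genuine gap in the disjoint case. You claim that when the two subcubes land in different halves of $Q_d - M_j$ and $|A_1 \cup A_2| = d$, adjacency between the subcubes is forced, so that this configuration falls under the ``unless'' clause. This is false. Take $d=6$ and two disjoint copies of $Q_3$ with the same set of free coordinates $\{1,2,3\}$, whose fixed coordinates differ in exactly two positions (so no vertex of one is adjacent to a vertex of the other), colored with disjoint color sets $A_1=\{1,2,3\}$ and $A_2=\{4,5,6\}$. Here $|A_1\cup A_2|=d$, the subcubes are non-adjacent, and the theorem asserts extendability; but every choice of $j\notin D_1\cup D_2$ defeats your argument: if the subcubes end up in the same copy of $Q_{d-1}$, the inductive hypothesis cannot even be stated because six colors appear on the precolored edges while $Q_5$ admits only five; if they end up in different copies, your required common $(d-1)$-palette containing $A_1\cup A_2$ does not exist. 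Your Galvin fallback does not rescue this either, both because you restrict it to $|A_1\cup A_2|<d$ and because applying it to $Q_d$ minus the precolored subgraph yields lists of size $d-k_i$ at subcube vertices, which is below the chromatic index of the remaining graph. This non-adjacent disjoint configuration is exactly what the paper's Case~2 handles, by passing to the quotient graph whose vertices are the components of the subgraph induced by the occupied dimensional matchings (a $(d-k)$-regular bipartite graph) and applying Theorem~\ref{th:Galvin} there, where the lists do have size at least $d-k$. That idea is absent from your proposal.

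In the intersecting case your plan also rests on an unproved normalization: you assert that Kempe swaps internal to $Q_{k_1}$, fixing the coloring on $Q_{k_1}\cap O_{k_2}$, can always turn one color class of $f|_{Q_{k_1}}$ into the $j$-th dimensional matching of $Q_{k_1}$. A proper $k_1$-edge coloring of a hypercube need not have any dimensional color class, and it is not clear that the required swaps exist while freezing the intersection; this is the hardest point of that case and you have only named it, not resolved it. The paper avoids the issue entirely by growing the colored region one dimension at a time from $Q_{k_1}$, copying colors to a parallel subcube and coloring each connecting matching monochromatically with a color from $A_2\setminus A_1$ (and its successors), tracking which colors of $A_2$ remain available at the growing intersection, and only then invoking Proposition~\ref{prop:MHallhypercube}. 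You would need either to prove your Kempe normalization or to replace it with such a constructive extension.
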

	
	We shall need the following easy lemma; the proof is left to the reader.

	\begin{lemma}
	\label{lem:intersect}
		Let $Q_{k_1}$ and $O_{k_2}$ be hypercubes contained in a hypercube
		$Q_d$ of larger dimension. If $Q_{k_1} \cap O_{k_2} \neq \emptyset$,
		then the intersection $Q_{k_1} \cap O_{k_2}$ is a hypercube of a smaller
		dimension.
	\end{lemma}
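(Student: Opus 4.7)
The plan is to identify $Q_d$ with the graph on $\{0,1\}^d$ whose edges connect strings differing in exactly one coordinate, and then to show that every subcube of $Q_d$ admits a canonical ``coordinate-fixing'' description. Once this is in hand, the intersection formula becomes a direct set-theoretic computation.

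I would first establish that any subgraph $H$ of $Q_d$ isomorphic to $Q_k$ is obtained by fixing $d-k$ of the $d$ coordinates to prescribed values. Fix a vertex $v_0 \in V(H)$; its $k$ neighbors in $H$ differ from $v_0$ in $k$ distinct coordinate positions, which we collect into a set $D \subseteq \{1,\dots,d\}$ with $|D|=k$. Using the rigidity of $4$-cycles in $Q_d$ (each $4$-cycle uses exactly two coordinate directions, opposite edges using the same direction), an induction on the distance in $H$ from $v_0$ shows that every edge of $H$ lies along a direction in $D$, and consequently every vertex of $H$ agrees with $v_0$ on the complementary set $F := \{1,\dots,d\}\setminus D$. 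Since $|V(H)|=2^k$ equals the number of binary strings agreeing with $v_0$ on $F$, the subgraph $H$ consists of \emph{exactly} those strings. Thus $H$ is uniquely encoded by a pair $(F, a)$ with $|F|=d-k$ and $a \colon F \to \{0,1\}$ recording the fixed values.

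Applying this to $Q_{k_1}$ and $O_{k_2}$ yields pairs $(F_1, a_1)$ and $(F_2, a_2)$ with $|F_i|=d-k_i$. A vertex $v \in \{0,1\}^d$ lies in $Q_{k_1} \cap O_{k_2}$ precisely when $v|_{F_1}=a_1$ and $v|_{F_2}=a_2$. Because the intersection is assumed non-empty, $a_1$ and $a_2$ agree on $F_1 \cap F_2$, so these two constraints merge into a single consistent assignment on $F_1 \cup F_2$. Hence $Q_{k_1} \cap O_{k_2}$ is exactly the subcube described by $(F_1 \cup F_2, \, a_1 \cup a_2)$, which is a hypercube of dimension $d - |F_1 \cup F_2| \leq \min(k_1,k_2) < d$, as required.

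The main obstacle is the first step: a priori, a subgraph of $Q_d$ isomorphic to $Q_k$ might embed into $Q_d$ in some exotic way, and ruling this out requires the $4$-cycle propagation argument sketched above. Once the coordinate-fixing normal form is secured, the intersection claim is essentially book-keeping; note in particular that one does not even need to check that the resulting subgraph is induced, since $H$ and the coordinate subcube $(F,a)$ have the same vertex set and both are induced subgraphs of $Q_d$.
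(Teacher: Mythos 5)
The paper offers no proof of this lemma to compare against --- it is stated with ``the proof is left to the reader'' --- so the only question is whether your argument is sound, and it is. Your coordinate-fixing normal form (every subgraph of $Q_d$ isomorphic to $Q_k$ consists exactly of the vertices obtained by fixing some $d-k$ coordinates of a base vertex) is the right way to make the lemma precise, and your $4$-cycle propagation sketch does fill in correctly: for a vertex $x$ of $H$ at distance $t+1$ from $v_0$ in $H$, take a neighbor $y$ at distance $t$ and a neighbor $z$ of $y$ at distance $t-1$; inside $H\cong Q_k$ the vertices $x$ and $z$ have a second common neighbor $w$, necessarily at distance $t$, and the rigidity of the $4$-cycle $zyxw$ in $Q_d$ together with the induction hypothesis applied to $y$, $z$ and $w$ forces the direction of the edge $yx$ into $D$. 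One point you pass over too quickly is the parenthetical claim that $H$ is an induced subgraph of $Q_d$: this is not automatic from the isomorphism type, but it does follow by counting, since $E(H)$ has $k2^{k-1}$ elements and is contained in the edge set of the subgraph of $Q_d$ induced on $V(H)$, which (being a coordinate subcube) has exactly $k2^{k-1}$ edges as well. With inducedness in hand, the graph intersection $Q_{k_1}\cap O_{k_2}$ really is the induced subgraph on the common vertex set, i.e.\ the subcube determined by $(F_1\cup F_2,\, a_1\cup a_2)$, of dimension $d-|F_1\cup F_2|\leq \min(k_1,k_2)<d$, exactly as you conclude; note this correctly allows the degenerate case of a single vertex ($Q_0$) and the case where the intersection equals one of the two cubes, both of which the paper's subsequent proof of Theorem 3.5 treats separately.
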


	\begin{proof}[Proof of Theorem \ref{th:TwoCubes}]
		Let $f_1$ ($f_2$) denote
		the restriction of the coloring $f$ to $Q_{k_1}$ ($O_{k_2}$).
		Let $\mathcal{M}$ be the set of dimensional matchings in $Q_d$, and
		denote by $\mathcal{M}_1$ and $\mathcal{M}_2$ 
		the set of dimensional matchings 
		that $Q_{k_1}$ and $O_{k_2}$ occupies, respectively. 
		Assume that $Q_{k_1}$ and $O_{k_2}$ together 
		contains edges from $k$ dimensional matchings, 
		put $\mathcal{M}_k=\mathcal{M}_1 \cup \mathcal{M}_2$, and
		let $\mathcal{Q}_k$
		be the set of subhypercubes of $Q_d$ 
		induced by all the dimensional matchings in $\mathcal{M}_k$.
		
		Let $H_1$ and $H_2$ be the components of $\mathcal{Q}_k$
		that contains $Q_{k_1}$ and $O_{k_2}$, respectively.
		Suppose first that $Q_{k_1}$ and $O_{k_2}$ are disjoint
		subgraphs of $Q_d$. This implies that $H_1$ and $H_2$ are disjoint.
		
		By Proposition \ref{prop:MHallhypercube}, there is a proper
		edge coloring $g_1$ of $H_1$ which agrees with $f_1$ and
		uses exactly $k$ colors from $\{1,\dots,d\}$, and
		a proper
		edge coloring $g_2$ of $H_2$ which agrees with $f_2$ and uses exactly
		$k$ colors from $\{1,\dots,d\}$ 
		(possibly distinct from the ones used in the coloring of
		$H_1$). Additionally, we choose
		these edge colorings so that $g_i$ uses as many colors from $A_{3-i}$
		as possible. 
		
		Note that if the coloring $g_1$ or $g_2$
		uses some color not in $A_1 \cup A_2$, then
		$|A_1 \cup A_2| < k$,
		and both $g_1$ and $g_2$ uses all colors in $A_1 \cup A_2$ and
		$k-|A_1 \cup A_2|$ additional colors from $\{1,\dots,d\}$. Clearly,
		we may then assume that $g_1$ and $g_2$ uses the same additional colors
		from $\{1,\dots,d\} \setminus (A_1 \cup A_2)$.

		\bigskip
		
		\noindent
		{\bf Case 1.} {\em There is an edge $e$ between a vertex of $H_1$
		and a vertex of $H_2$:}
		
		We prove that the coloring $f$ can be extended to a $d$-edge coloring
		of $Q_d$ if $d - |A_1 \cup A_2| > 0$.
		
		Let $M$ be the dimensional matching that 
		contains $e$. Consider 
		the set of subhypercubes $\mathcal{Q}_{k+1}$ 
		induced by the set of dimensional matchings 
		$\mathcal{M}_k \cup \{M\}$.
		Since $e$ is adjacent to both vertices of $H_1$ and $H_2$
		we have that $H_1$ and $H_2$ are subgraphs of the
		same component $H$ in $\mathcal{Q}_{k+1}$.
		
		Now, if $|A_1 \cup A_2|<k$, then $g_1$ and $g_2$ uses the same $k$ 
		colors from $\{1,\dots,d\}$. Moreover, $d \geq k+1$, because
		$M \notin \mathcal{M}_k$. This implies that 
		there is a color
		$c \in \{1,\dots,d\}$ which is not used in the coloring $g_1$ or $g_2$. 
		By coloring all the edges of the dimensional matching
		$M$ with one endpoint in $H_1$ and one endpoint in $H_2$ by color $c$, 
		we obtain a proper edge coloring of $H$; by Proposition 
		\ref{prop:MHallhypercube} this edge coloring can be
		extended to a proper $d$-edge coloring of $Q_d$.
		Clearly, this coloring is an extension of $f$.

		If, on the other hand, $|A_1 \cup A_2| \geq k$, then $g_1$ and $g_2$
		uses only colors from $A_1 \cup A_2$, and since $d > |A_1 \cup A_2|$,
		there is a color
		$c \in \{1,\dots,d\}$ which is not used in the coloring $g_1$ or $g_2$;
		as in the preceding paragraph, we conclude that $f$ is extendable.
		
		\bigskip
		
		\noindent
		{\bf Case 2.} {\em  There is no pair of adjacent vertices where one 
		is in $H_1$ and the other in $H_2$:}
		
		Consider the graph $\mathcal{Q}_k$; 
		by Lemma \ref{lem:DimMathInduce}, $\mathcal{Q}_k$
		consists of disjoint $k$-dimensional hypercubes.
		We define a new graph $G$ 
		where every component $H_i$ in 
		$\mathcal{Q}_k$ is represented by a vertex $u_{H_i}$,
		and where
		$u_{H_i}$ and $u_{H_j}$, $i \neq j$, 
		are adjacent if there is an edge joining
		a vertex of $H_i$ with a vertex of $H_j$. 
		It is easy to see that $G$ is a regular
		bipartite graph with degree $d-k$.
		
		We define a list assignment $\mathcal{L}$ for $G$
		by for every edge $e=u_{H_i}u_{H_j}$ of $G$
		and every color $c \in \{1,\dots,d\}$ including $c$
		in $\mathcal{L}(e)$ if
		\begin{itemize}
		\item $c$ does not appear in the coloring of $H_1$ if $i=1$ or $j=1$.
		\item $c$ does not appear in the coloring of $H_2$ if $i=2$ or $j=2$.
		\end{itemize}
		Since $H_1$ and $H_2$ do not contain pairs of adjacent vertices,
		$|\mathcal{L}(e)| \geq  d - k$ for all edges $e \in E(G)$.
		Thus, by Theorem \ref{th:Galvin},
		there is a proper edge coloring of $G$ with support in the lists.
		By coloring all edges going between $H_i$ and $H_j$ by 
		the color of the edge
		$e=u_{H_i}u_{H_j}$, and coloring every uncolored 
		subhypercube $H_i$ in $\mathcal{Q}_k$
		by $k$ colors which does not appear on the edges incident
		with $u_{H_i}$ in $G$, we obtain a proper $d$-edge coloring of $Q_d$
		that is an extension of $f$.

		\bigskip
		
		Let us now consider the case when $Q_{k_1}$ and $O_{k_2}$ 
		are not disjoint.
		If $Q_{k_1}$ and $O_{k_2}$ intersect in only one vertex, then
		$Q_{k_1}$ and $O_{k_2}$ occupy different dimensional matchings
		and $A_1 \cap A_2 = \emptyset$. Hence, for $i=1,2$,
		by Lemma \ref{lem:DimMathInduce}
		and K\"onig's edge coloring theorem,
		there is a proper
		edge coloring $g_i$ with colors only from $A_i$
		of the subgraph of $Q_d$ induced by the matchings
		in $\mathcal{M}_i$ which agrees with $f_i$.
		Similarly, the subgraph of $Q_d$ induced by 
		$\mathcal{M} \setminus \mathcal{M}_k$
		is $(d-k)$-regular; so if $d>k$,
		then there is, by K\"onig's edge coloring theorem,
		a proper $(d-k)$-edge coloring of this graph using colors only
		from the set $\{1,\dots,d\}\setminus (A_1 \cup A_2)$.
		This coloring, along with $g_1$ and $g_2$, yields a proper $d$-edge coloring
		of $Q_d$ that is an extension of $f$.
	
		\bigskip
		
		Suppose now that $Q_{k_1} \cap O_{k_2}$ contains at least one edge;
		by Lemma \ref{lem:intersect},
		this intersection is an $r$-dimensional hypercube $D_r$
		($r\geq 1$). Also, $H_1 = H_2$. 
		
		We shall prove that there is a proper edge coloring of $H_1$ that agrees
		with $f$ and uses at most $d$ colors; 
		the result then follows by invoking 
		Proposition \ref{prop:MHallhypercube}.
		If $D_r = O_{k_2}$ (or $D_r = Q_{k_1}$), then
		obviously $f$ is extendable, so we assume that this is not
		the case. Thus $k_2-r \geq 1$.
		
		Let us consider the restriction $f_r$ of the coloring $f$
		to $D_r$. Since $Q_{k_1}$ and $O_{k_2}$ 
		are both regular bipartite graphs,
		and the restriction of $f$ to $Q_{k_1}$ and
		$O_{k_2}$ are both proper edge colorings using
		a minimum number of colors, the coloring $f_r$ is a proper edge coloring
		using exactly $r$ colors; that is, $|A_1 \cap A_2| =r$.
		
		Consider the subgraph $\mathcal{Q}_{k_1}$ 
		of $Q_d$ induced by all dimensional matchings in
		$\mathcal{M}_1$.
		Consider a subhypercube $Q'_{k_1}$ of dimension $k_1$
		in $\mathcal{Q}_{k_1}$ that lies in $H_1$,
		and such that the vertices of $Q_{k_1}$ and $Q'_{k_1}$ 
		are adjacent via a subset $M_1$ of edges lying in a dimensional matching.
		Note that some edges of $M_1$ and $Q'_{k_1}$ 
		are in $O_{k_2}$. Let $S_1 = E(Q'_{k_1}) \cap E(O_{k_2})$, 
		$T_1=M_1 \cap E(O_{k_2})$.
		By coloring the edges of 
		$E(Q'_{k_1}) \setminus S_1$ by the colors of the corresponding edges in 
		$Q_{k_1}$ and coloring all the edges of $M_1 \setminus T_1$ by 
		a fixed color $c \in A_2 \setminus A_1$ (such a color exists
		since $k_2-r \geq 1$),
		we obtain an edge coloring of the subhypercube $Q_{k_1+1}$
		containing $Q_{k_1}$ and $Q'_{k_1}$. This edge coloring
		is proper, since all common colors of $A_1$ and $A_2$ appears in
		the coloring of $D_r$ and are therefore not used in the coloring
		of $E(Q'_{k_1}) \setminus S_1$.
		Moreover, 
		$O_{k_2} \cap Q_{k_1+1}$ is an $(r+1)$-dimensional hypercube
		$D_{r+1}$ containing $D_r$, and if $u$ is an arbitrary vertex of 
		$D_{r+1}$, then the set of colors incident with $u$ in 
		$Q_{k_1+1} - E(D_{r+1})$ is disjoint from $A_2$. 
		
		If $k_2 -r =1$, then we are done; the constructed edge coloring
		of $H_1$ can by Proposition \ref{prop:MHallhypercube} be extended
		to a proper $d$-edge coloring of $Q_d$.
		
		Suppose now that $k_2 -r \geq 2$.
		Let $A_{k_1 +1}$ be the set of
		colors in $A_2$ that has not been used in the coloring of
		$Q_{k_1 +1} - E(D_{r+1})$;
		since the coloring of $Q_{k_1+1} - E(D_{r+1})$ is a
		proper $(k_1+1)$-edge coloring
		in which $k_1$ colors are in $A_1$,
		we have $|A_{k_1 +1}| = k_2-r -1\geq 1$.
		Consider a subhypercube $Q'_{k_1 +1}$
		of $H_1$ that occupy the same
		dimensional matchings as the subhypercube $Q_{k_1 +1}$, and
		such that the vertices of $Q_{k_1 +1}$
		and $Q'_{k_1+1}$ are adjacent via a subset $M_2$ of edges lying
		in a dimensional matching.
		 Note that some edges of $M_2$ and $Q'_{k_1+1}$ 
		are in $O_{k_2}$.
		Let $S_2=E(Q'_{k_1+1}) \cap E(O_{k_2})$, $T_2=M_2 \cap E(O_{k_2})$.
		By coloring the edges of $E(Q'_{k_1 +1}) \setminus S_2$ by the colors of 
		corresponding edges in
		$Q_{k_1 +1}$ and coloring all the edges of
		$M_2 \setminus T_2$ by a fixed color $c \in A_{k_1+1}$,
		we obtain a proper edge coloring of the subhypercube $Q_{k_1+2}$ 
		containing $Q_{k_1 +1}$ and $Q'_{k_1+1}$, and where
		$O_{k_2} \cap Q_{k_1+2}$ is an $(r+2)$-dimensional hypercube
		$D_{r+2}$ containing $D_{r+1}$.
		Moreover, if $u$ is an arbitrary vertex of $D_{r+2}$, 
		then the set of colors incident with $u$
		in $Q_{k_1+2} - E(D_{r + 2})$ is disjoint from $A_2$. 
		
		Now, if $k_2-r =2$, then we are done; 
		otherwise, we continue the above process until we get 
		a proper edge coloring of $H_1$, which can then be
		extended to a proper edge coloring of $Q_d$ by  
		Proposition \ref{prop:MHallhypercube}.		
	\end{proof}

	\bigskip
	
	Next, we consider the case when all precolored
	edges lie in a matching. We would like to propose the following:
	
	\begin{conjecture}
	\label{conj:inducedmatch}
			If $\varphi$ is an edge precoloring of $Q_d$ where
			all precolored edges lie in an induced matching,
			then $\varphi$ is extendable to a proper $d$-edge coloring.
	\end{conjecture}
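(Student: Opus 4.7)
The plan is to proceed by induction on $d$. For $d \leq 3$, every induced matching in $Q_d$ has at most $d-1$ edges (in $Q_2$ any two matching edges are opposite in the $4$-cycle and hence at distance exactly $1$; in $Q_3$ a short case analysis shows three edges cannot be pairwise at distance $\geq 2$), so the statement reduces to Theorem \ref{th:hypercube}. For the inductive step, assume the conjecture for $Q_{d-1}$ and let $\varphi$ precolor an induced matching $M$ in $Q_d$. By Lemma \ref{lem:matchings} fix the decomposition $E(Q_d) = M_1 \cup \cdots \cup M_d$ into dimensional matchings and set $M^{(i)} := M \cap M_i$.

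The central reduction I would aim for is: find a dimensional matching $M_{i_0}$ and a color $c_0 \in \{1,\ldots,d\}$ such that (a) every edge of $M^{(i_0)}$ is $\varphi$-colored $c_0$, and (b) no precolored edge outside $M_{i_0}$ is $\varphi$-colored $c_0$. Given such a pair, color every edge of $M_{i_0}$ by $c_0$; condition (a) makes this compatible with $\varphi$. The complement $Q_d - M_{i_0}$ splits into two copies $H_1, H_2$ of $Q_{d-1}$, and by (b) the restriction of the updated precoloring to each $H_j$ is an induced matching precoloring using only colors from the $(d-1)$-set $\{1,\ldots,d\} \setminus \{c_0\}$. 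The inductive hypothesis then supplies proper $(d-1)$-edge colorings of $H_1$ and $H_2$ extending these restrictions, and together with the monochromatic matching $M_{i_0}$ they form the required proper $d$-edge coloring of $Q_d$.

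The hard step is producing the pair $(M_{i_0}, c_0)$. Encode the situation in a bipartite multigraph $B$ with sides $\{M_1,\ldots,M_d\}$ and $\{1,\ldots,d\}$, where the multiplicity of the pair $(M_i, c)$ is $|\{e \in M^{(i)} : \varphi(e) = c\}|$; the desired pair corresponds to a connected component of $B$ consisting of only the two vertices $M_{i_0}$ and $c_0$. If such a component already exists we are done; otherwise some color appears on precolored edges of two different dimensional matchings or some dimensional matching carries precolored edges of two different colors, and one must exchange colors along Kempe chains so as to bring $B$ into the desired shape while keeping $\varphi$ proper and $M$ unchanged as a set. As a parallel strategy, a direct list-coloring of $G := Q_d - M$ with $L(e) := \{1,\ldots,d\}$ minus the $\varphi$-color of any precolored edge incident to $e$ satisfies $|L(e)| \geq d-1$ (since $M$ is induced, at most one endpoint of each $e \in E(G)$ is matched), yet Galvin's Theorem \ref{th:Galvin} guarantees an $\mathcal{L}$-coloring only when $|L(e)| \geq \chi'_L(G) = d$. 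Closing this single-color deficit---whether by Kempe analysis or by exploiting the symmetry of $Q_d$ to improve the list-coloring bound by $1$ on the structured subset of edges incident to $M$---is, I expect, the crux of the conjecture and the main obstacle.
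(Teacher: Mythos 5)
First, a point of orientation: the statement you are trying to prove is stated in the paper as a \emph{conjecture}, and the authors do not prove it. They establish only special cases --- the case where the induced matching lies in at most two dimensional matchings (the proposition immediately following the conjecture), the case where all precolored edges receive the same color (via results of Vandenbussche and West), and, in a companion paper, the case of distance-$3$ matchings. So there is no full proof in the paper to compare yours against, and yours is not a full proof either.

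The gap in your outline is the one you yourself flag, and it is genuine. The reduction via a pair $(M_{i_0}, c_0)$ is sound whenever such a pair exists (for instance, whenever $\varphi$ uses fewer than $d$ colors and leaves some dimensional matching untouched), but you give no argument that it always exists: if $\varphi$ uses all $d$ colors and every used color appears in two distinct dimensional matchings, or every dimensional matching meeting $M$ carries edges of two colors, then condition (a) or (b) fails for every candidate pair. ``Exchange colors along Kempe chains'' is not an argument here --- the colors on the precolored edges are data of the problem and must be preserved, so any Kempe exchange touching $M$ is exactly what the extension problem forbids, and you have not exhibited chains that avoid $M$. Your list-coloring fallback stalls one color short of Galvin's threshold ($|L(e)| \geq d-1$ versus $\chi'_L(Q_d - M) = d$, since vertices of $Q_d$ not covered by $M$ still have degree $d$), and no improvement of that bound is offered. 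It is instructive that the paper's partial result succeeds precisely because it deletes \emph{two} dimensional matchings: each edge of the resulting copy of $Q_{d-2}$ loses at most two colors from its list, and lists of size $d-2$ do meet Galvin's threshold $\chi'_L(Q_{d-2}) = d-2$; the authors first symmetrize the precoloring on the $4$-cycles induced by the two dimensional matchings and then reflect an $L$-coloring of one $Q_{d-2}$ component onto the other three. Your outline, as written, does not recover even that two-dimensional-matching case, and the general conjecture remains open.
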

	
	In \cite{CasselgrenMarkstromPham}, we proved that this conjecture
	is true under the stronger assumption that every precolored edge
	is of distance at least $3$ from any other precolored edge.
	Moreover, by results in \cite{VandenbusscheWest}, Conjecture
	\ref{conj:inducedmatch} is true in the case when all precolored
	edges have the same color.
	
	Here we prove that the conjecture is true when all precolored edges lie
	in at most two distinct dimensional matchings.
	
	\begin{proposition}
		If the precolored edges of $Q_d$ form an induced matching
		all edges of which lie in two dimensional matchings,
		then the precoloring is extendable.
	\end{proposition}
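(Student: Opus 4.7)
The plan is to reduce to an application of Galvin's theorem (Theorem \ref{th:Galvin}) on an auxiliary bipartite graph isomorphic to $Q_{d-2}$.

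First I observe that each connected component of $F := M_1 \cup M_2$ contains at most one precolored edge. By Lemma \ref{lem:DimMathInduce}, $F$ decomposes into $2^{d-2}$ disjoint 4-cycles, which I label $\{C_v : v \in V(Q_{d-2})\}$ after identifying the ``remaining'' $d-2$ coordinates with $V(Q_{d-2})$. Two distinct precolored edges inside the same $C_v$ would be non-adjacent in $C_v$ and hence opposite, so both in $M_1$ or both in $M_2$; but any two opposite $M_1$-edges of $C_v$ are joined by an $M_2$-edge of $C_v$ (and vice versa), placing them at distance $1$ in $Q_d$ and contradicting the induced matching hypothesis. Let $S$ be the set of $v$ for which $C_v$ contains a precolored edge, and write $\kappa_v$ for the color of that edge.

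Next, let $G^* \cong Q_{d-2}$ be the graph with vertex set $\{C_v : v \in V(Q_{d-2})\}$ in which $C_v, C_w$ are adjacent precisely when $v,w$ are adjacent in $Q_{d-2}$ (equivalently, when they are joined in $Q_d$ by edges of some $M_i$ with $i \geq 3$). For each edge $vw$ of $G^*$ put
\[
\mathcal{L}(vw) = \{1, \ldots, d\} \setminus \big(\{\kappa_v : v \in S\} \cup \{\kappa_w : w \in S\}\big),
\]
so $|\mathcal{L}(vw)| \geq d-2 = \chi'(G^*)$ by the observation above. Theorem \ref{th:Galvin} then supplies a proper $\mathcal{L}$-edge coloring $c$ of $G^*$.

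I lift $c$ to $Q_d$ by assigning each $Q_d$-edge between $(x,v)$ and $(x,w)$ (for $vw \in E(G^*)$ and $x \in V(Q_2)$) the color $c(vw)$. At each vertex $(x,v)$ the $d-2$ edges outside $F$ then carry $d-2$ distinct colors, none of which equals $\kappa_v$ when $v \in S$; consequently the complementary pair $\{\alpha_v, \beta_v\} := \{1, \ldots, d\} \setminus \{c(vw) : w \sim v\}$ has size exactly $2$ and contains $\kappa_v$ whenever $v \in S$. Finally I color both $M_1$-edges of $C_v$ with one element of $\{\alpha_v, \beta_v\}$ and both $M_2$-edges with the other, placing $\kappa_v$ on whichever matching holds the precolored edge. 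The resulting edge coloring of $Q_d$ is proper and extends $\varphi$.

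The main obstacle is guaranteeing that Galvin's theorem actually applies, i.e.\ that $|\mathcal{L}(vw)| \geq \chi'(G^*) = d-2$ for every edge; once the at-most-one-precolored-edge-per-4-cycle observation is established this bound is automatic, since each vertex of $G^*$ then contributes at most one forbidden color to the list of any incident edge.
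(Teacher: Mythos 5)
Your proof is correct and follows essentially the same route as the paper: the paper also reduces to a single application of Galvin's theorem on a copy of $Q_{d-2}$ (one of the four components of $Q_d-(M_1\cup M_2)$, which plays exactly the role of your quotient graph $G^*$) with the same lists of size at least $d-2$ obtained from the at-most-one-precolored-edge-per-$4$-cycle observation, then lifts the coloring uniformly to the other three copies and completes the $4$-cycles with the two leftover colors at each vertex.
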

	\begin{proof}
		Let $M_1$ and $M_2$ be the two dimensional matchings of 
		$Q_d$ containing all
		precolored edges. 
		Denote this precoloring by $\varphi$.
		By Lemma \ref{lem:DimMathInduce},
		$Q_d - M_1 \cup M_2$ is isomorphic to four copies
		$H_1, \dots, H_4$ of the $(d-2)$-dimensional
		hypercube. 
		Moreover, the graph $Q_d[M_1 \cup M_2]$ induced by 
		$M_1 \cup M_2$ is a disjoint union
		of $2$-dimensional hypercubes, and every vertex of $H_i$ is adjacent
		to precisely two edges from $Q_d[M_1 \cup M_2]$.
		
		Since the precolored edges form an induced matching, at most
		one edge of each component of $Q_d[M_1 \cup M_2]$ is precolored.
		From the precoloring $\varphi$ of $Q_d[M_1 \cup M_2]$ we define
		an edge precoloring $\varphi'$ of $Q_d[M_1 \cup M_2]$ 
		that satisfies the following:
		
		\begin{itemize}
		
			\item $\varphi'$ agrees with $\varphi$ on any edge that is colored
			under $\varphi$;
			
			\item for each component of $Q_d[M_1 \cup M_2]$, exactly two
			edges in this component are colored under $\varphi'$; moreover,
			these two edges are non-adjacent and have the same color under $\varphi'$.
		
		\end{itemize}
		Trivially, there is such a precoloring $\varphi'$; so
		to prove the theorem, it suffices to prove that there
		is a proper $d$-edge coloring $f$ of $H_1$ such that 
		for every edge $e$ of $H_1$, there is no adjacent edge $e'$ in 
		$Q_d[M_1 \cup M_2]$ such that $f(e) = \varphi'(e')$.
		This follows from the observation that given such a 
		coloring $f$ of $H_1$, we may color
		the edges of $H_2, H_3$ and $H_4$ correspondingly, and thereafter
		color the uncolored edges of $Q_d[M_1 \cup M_2]$ by for each edge using
		the unique color not appearing at any of its endpoints.
		
		To construct such a coloring of the edges of $H_1$
		we define a list assignment $L$ for $H_1$ by
		for every edge $e \in E(H_1)$ setting
		$$L(e) = \{1,\dots, d\} \setminus 
		\{\varphi'(e') : \text{ $e'$ is adjacent to $e$} \}.$$ Since
		every edge of $H_1$ is adjacent to two $\varphi'$-precolored edges, 
		$|L(e)| \geq d-2$ for every edge $e \in E(H_1)$. Hence,
		by Theorem \ref{th:Galvin}, 
		there is an $L$-coloring of $H_1$.
	\end{proof}

	Note that the condition on the matching being induced is best possible
	in terms of size of a precolored subset of a dimensional matching that is
	extendable to a proper $d$-edge coloring of $Q_d$. To see this,
	color all $2^{d-2}$ edges of a maximal induced matching $M_1$ contained in
	a dimensional matching $M$ with color $1$.
	Note that any extension of this precoloring uses color $1$ on all edges
	of $M$, because $M_1$ is a maximal induced matching of $M$. 
	So by coloring one edge of $M \setminus M_1$
	by color $2$, we obtain a non-extendable edge precoloring.

	\bigskip
	
	Next, we shall establish an analogue for hypercubes
	of the characterization by Andersen and Hilton \cite{AndersenHilton}
	of which $n \times n$ partial Latin squares with exactly $n$
	nonempty cells are completable.
	We shall prove that a proper precoloring of at most $d$ edges in $Q_d$ 
	is always extendable unless
	the precoloring $\varphi$ satisfies any 
	of the following conditions:

	\begin{itemize}
	 
		 \item[(C1)]  there is an uncolored edge $uv$ in $Q_d$ such that
		$u$ is incident with edges of $k \leq d$ distinct colors and
		$v$ is incident to $d-k$ edges colored with $d-k$
		other distinct colors (so $uv$
		is adjacent to edges of $d$ distinct colors);
		
		\item[(C2)]  there is a vertex $u$ 
		and a color $c$ such that $u$ is incident with at least one colored
		edge, $u$ is not incident with any edge of color $c$, and every
		uncolored edge incident with $u$ is adjacent to another edge colored $c$;

		\item[(C3)] there is a vertex $u$ and a color $c$
		such that every edge
		incident with $u$ is uncolored
		and every edge incident with $u$ is adjacent to another edge colored $c$;

		\item[(C4)]
		$d=3$ and the three precolored edges use three different colors and
		form a subset of a dimensional matching.

	\end{itemize}
		For $i=1,2,3,4$, we denote by $\mathcal{C}_i$
	the set of all colorings of $Q_d$, $d \geq 1$,
	satisfying the corresponding condition above, and we set
	$\mathcal{C} = \cup \mathcal{C}_i$.
	Let us briefly verify that if $\varphi$ is a precoloring of $Q_d$ 
	with exactly $d$ precolored
	edges and $\varphi \in \mathcal{C}$, then $\varphi$ is not extendable.
	
	Suppose first that the precoloring $\varphi$ satisfies condition (C1).
	Since the edge $uv$ is adjacent to edges of $d$ distinct colors,
	there is no proper $d$-edge coloring of $Q_d$ that agrees with $\varphi$.
	If $\varphi$, on the other hand, satisfies condition (C2), then since
	$u$ has degree $d$, any extension of $\varphi$ satisfies that the
	color $c$ must appear on one of the edges in
	$\{uv_1, \dots, uv_k\}$. However, such a $d$-edge coloring cannot be proper
	since this implies that there is a vertex that is incident with two edges
	colored $c$.
	
	Suppose now that $\varphi$ satisfies condition (C3). If $f$ is an extension
	of $\varphi$, then since $u$ has degree $d$, at least one edge incident with
	$u$ is colored $c$. However, such a $d$-edge coloring is not proper, so
	$\varphi$ is not extendable. That $\varphi$ is not extendable if it satisfies
	condition (C4) is a straightforward verification and is left to the reader.

	\begin{theorem}
	\label{th:dprecol}
		If $\varphi$ is a proper $d$-edge precoloring of $Q_d$ 
		with exactly $d$ precolored edges
		and $\varphi \notin \mathcal{C}$, then $\varphi$ is extendable to a proper
		$d$-edge coloring of $Q_d$.
	\end{theorem}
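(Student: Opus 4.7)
The plan is to proceed by induction on $d$, with Theorem \ref{th:hypercube} as the principal engine and Theorem \ref{th:dprecol} itself invoked inductively on $Q_{d-1}$ whenever $d-1$ precolored edges pile up inside a single $(d-1)$-subcube. The base cases $d=2$ and $d=3$ are verified by a direct case-analysis; the $d=3$ case is precisely what forces the fourth exceptional family in $\mathcal{C}$, so the base step must be checked explicitly rather than treated as folklore.

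For the inductive step with $d \geq 4$, the central observation is that $Q_d$ has exactly $d$ dimensional matchings while $\varphi$ colors exactly $d$ edges, so by pigeonhole either (i) some dimensional matching $\hat M$ is $\varphi$-free, or (ii) every dimensional matching contains exactly one precolored edge. In situation (i), write $Q_d - \hat M = H_1 \cup H_2$ with $H_i \cong Q_{d-1}$, and let $k_i$ be the number of precolored edges in $H_i$, so $k_1 + k_2 = d$. When $2 \leq k_1, k_2 \leq d-2$, Theorem \ref{th:hypercube} extends $\varphi|_{H_i}$ to a proper $(d-1)$-coloring of each $H_i$ on a common palette, leaving one color for $\hat M$. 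When $\{k_1,k_2\}=\{d-1,1\}$, we apply the inductive hypothesis (Theorem \ref{th:dprecol} at dimension $d-1$) to the side with $d-1$ precolored edges, after first checking that $\varphi \notin \mathcal{C}$ passes to the corresponding precoloring on $Q_{d-1}$, and then use the color-swap trick from Case~2 of the proof of Theorem \ref{th:hypercube} to transfer the coloring to the other side. The delicate sub-case is $\{k_1,k_2\}=\{d,0\}$, in which $H_1 \cong Q_{d-1}$ carries all $d$ precolored edges; here we further split $H_1$ along one of its own dimensional matchings and reduce to two copies of $Q_{d-2}$, each with at most $d-2$ precolored edges, to which Theorem \ref{th:hypercube} applies, using $\varphi \notin \mathcal{C}$ to rule out any forced clash.

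In situation (ii), choose a dimensional matching $M_0$ containing a precolored edge $e_0=u_0v_0$ of color $c_0$, and write $Q_d - M_0 = H_1 \cup H_2$. The restriction $\varphi|_{E(H_1)\cup E(H_2)}$ has $d-1$ precolored edges, split arbitrarily between the two cubes, so Theorem \ref{th:hypercube} yields proper $(d-1)$-colorings $\varphi_i$ of each $H_i$ extending $\varphi|_{H_i}$. To finish, the two palettes must be aligned on a common $(d-1)$-subset of $\{1,\dots,d\}$, and a sequence of Kempe-chain swaps inside $H_1$ and $H_2$ must be performed so that the missing color at $u_0$ in $H_1$ and at $v_0$ in $H_2$ is precisely $c_0$; then every edge of $M_0$ can be colored with $c_0$ (or more generally the missing colors, consistently with $\varphi(e_0)=c_0$). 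The role of the hypothesis $\varphi \notin \mathcal{C}$ is that each of the four conditions in $\mathcal{C}$ corresponds to a specific obstruction that blocks one of these Kempe swaps or the greedy completion, so whenever no exceptional configuration is present an appropriate chain is available, possibly also invoking Theorem \ref{th:Galvin} on an auxiliary bipartite list-coloring instance.

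The step I expect to be the main obstacle is the bookkeeping that pairs each of the four conditions in $\mathcal{C}$ with the precise place where it becomes the forced obstruction. In particular, the all-in-one-side sub-case of situation (i) and the palette-alignment argument in situation (ii) both require one to verify, by direct inspection of the failure modes of the reductive procedure, that any failure manifests as one of the four exceptional configurations. No single reduction is individually deep, but correctly matching potential failures to the exceptional families in $\mathcal{C}$, while keeping the inductive invariants intact, is the technical heart of the proof.
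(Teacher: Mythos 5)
Your skeleton (induction on $d$, explicit base case $d=3$, decomposition along a dimensional matching, pigeonhole on whether some dimensional matching is precolor-free) matches the paper's, but several of your reduction steps are either false as stated or leave the actual difficulty unaddressed. The most concrete error is in situation (i) with $2 \leq k_1,k_2 \leq d-2$: you claim Theorem \ref{th:hypercube} gives proper $(d-1)$-colorings of $H_1$ and $H_2$ ``on a common palette, leaving one color for $\hat M$.'' If the precolored edges of $H_1$ and $H_2$ together use all $d$ colors (e.g.\ colors $1,\dots,d/2$ in $H_1$ and $d/2+1,\dots,d$ in $H_2$), no common $(d-1)$-subset of $\{1,\dots,d\}$ contains both color sets, so this step cannot be carried out; this is exactly the configuration the paper's Lemma \ref{lem:allcolors} (Case 2) spends a page on, using corresponding edges, recolorings along $4$-cycles, and a separate treatment of $d=6$ versus $d\geq 8$. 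A similar problem infects your $\{k_1,k_2\}=\{d,0\}$ sub-case: splitting $H_1$ into two copies of $Q_{d-2}$ does not guarantee at most $d-2$ precolored edges per half, and when all $d$ colors appear on edges inside $H_1 \cong Q_{d-1}$ (chromatic index $d-1$) no palette bookkeeping of this kind can work — the paper instead deletes the color from one suitably chosen edge, applies the induction hypothesis, and restores it. In situation (ii) you also misapply Theorem \ref{th:hypercube}: the $d-1$ remaining precolored edges may all lie in $H_1$, which exceeds the $d-2$ bound for $Q_{d-1}$.

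Beyond these, the part you yourself flag as the main obstacle — showing that whenever the Kempe-swap/palette-alignment procedure fails, the failure is certified by one of the four conditions defining $\mathcal{C}$ — is precisely where all the content of the theorem lives, and your proposal offers no argument for it. The paper makes this tractable by an organizational device absent from your plan: it splits the induction step into four lemmas according to how many distinct colors $\varphi$ uses (one, two, between three and $d-1$, or all $d$), because the obstructions and the available recolorings are qualitatively different in each regime; within the one-color and two-color cases it needs dedicated structural claims (e.g.\ Claim \ref{cl:onecolor} producing an alternating cycle avoiding the blocked vertex, and Claim \ref{cl:PartCol}) to perform the transfer between $H_1$ and $H_2$. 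Without some substitute for that machinery, the proposal is a plausible outline rather than a proof.
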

	
	The proof of this theorem is rather lengthy so we devote the next section
	to this proof.


	\section{Proof of Theorem \ref{th:dprecol}}

	The proof of Theorem \ref{th:dprecol} proceeds by induction.
	It is easily seen that the theorem holds
	when $d \in \{1,2\}$; let us consider the case when
	$d=3$.
	
	Let $\varphi$ be a precoloring of $Q_3$ and let us first assume
	that all precolored edges have the same color.
	If all three precolored edges lie in distinct dimensional matchings,
	then $\varphi \in \mathcal{C}_3$, and if all three edges lie in the
	same dimensional matching, then 
	we may color all the edges in this dimensional matching by the same color,
	and then obtain an extension of
	$\varphi$ by K\"onig's edge coloring theorem.
	Moreover, in the case when exactly two of the precolored edges
	are in the same dimensional matching, then these two
	edges must be at distance $1$ from each other, and so there is a perfect
	matching containing all precolored edges; hence, $\varphi$ is extendable.
		
		Suppose now that two colors appear on the precolored edges.
		Let $e_1,e_2,e_3$ be the precolored edges of $Q_3$
		and assume that two edges from $\{e_1,e_2,e_3\}$, say $e_1$ and $e_2$,
		have the same color and $e_3$ has another color under $\varphi$.
		If $e_1$ and  $e_2$ lie in the same dimensional matching, then
		$\varphi$ is extendable provided that there is a perfect matching
		of $Q_3$ containing $e_1$ and $e_2$, but not $e_3$.
		If $e_1$ and $e_2$ lie on a common $4$-cycle, then there is certainly 
		such a matching;
		if $e_1$ and $e_2$ does not lie on a common $4$-cycle, then
		this holds unless
		$\varphi \in \mathcal{C}_2$.

		Let us now
		assume that $e_1$ and $e_2$
		lie in different dimensional matchings. By symmetry,
		we may assume that $e_1$
		is any fixed edge of $Q_3$, which then yields
		$4$ different choices for the edge $e_2$, because every edge of $Q_3$
		is adjacent to exactly four other edges. 
		In fact, again by symmetry, it suffices to consider the two 
		different cases when $e_2$ is in different dimensional matchings
		(distinct from the one containing $e_1$).
		It is straightforward
		to verify
		that in both cases, the edges $e_1$ and $e_2$ are contained
		in a perfect
		matching not containing $e_3$ unless
		$\varphi \in \mathcal{C}_2$. Hence, if $\varphi \notin \mathcal{C}$,
		then $\varphi$ is extendable.
		
		Finally, let us consider the case when 
		three distinct colors appear on edges
		under $\varphi$. If all three precolored 
		edges $e_1, e_2, e_3$ lie in distinct dimensional matchings,
		then $\varphi$ trivially is extendable. Moreover, 
		since $\varphi \notin \mathcal{C}$, all three precolored edges do not
		lie in the same dimensional matching. Hence, it suffices to consider
		the case when exactly two of the precolored edges lie in the same
		dimensional matching. We assume $\varphi(e_i) = i$.
		
		Suppose, without loss of generality, that $e_1$ and $e_2$ lie in
		the same dimensional matching. We first consider the case when
		$e_1$ and $e_2$ lie on a common $4$-cycle.
		Since $\varphi \notin \mathcal{C}$, either $e_3$ is adjacent
		to both $e_1$ and $e_2$, or not adjacent to any of these edges.
		In both cases, $\varphi$ is extendable by coloring all
		edges in the dimensional matching containing $e_3$ by color $3$.
		If, on the other hand, $e_1$ and $e_2$ do not lie on a common $4$-cycle,
		then  we may extend $\varphi$ by coloring all
		edges of the dimensional matching containing $e_3$ by color $3$.
	This completes the base step of our inductive proof of
	Theorem \ref{th:dprecol}.
	
	\bigskip
	
	Let us now assume that the theorem holds for any hypercube of dimension
	less than $d$, and consider a precoloring $\varphi$ of $Q_d$.
	The induction step of the proof of
	Theorem \ref{th:dprecol} is done by proving a series of lemmas.
	We shall also need two preparatory lemmas.
	
	\begin{lemma}
		\label{cl:onecolor}
			Let $Q_{d-1}$ be the $(d-1)$-dimensional hypercube, 
			where $d-1\geq 3$.
			Suppose that $d-1$ edges are precolored with color $1$
			in $Q_{d-1}$, 
			and that
			there is a vertex $u$ not incident with any precolored edge,
			but every neighbor of $u$ is incident with an edge colored
			$1$. Let $e_1$ be an uncolored edge which is not incident with $u$,
			but adjacent to at least one precolored edge.
			Unless $d-1 =3$ and one end $x$ of $e_1$ is incident with three
			uncolored edges all of which are adjacent to precolored edges,
			then there
			is a cycle $C=v_1v_2 \dots v_{2k} v_1$ in $Q_{d-1}$
			of even length 
			with the following properties:
			
			\begin{itemize}
			
			\item[(i)] $v_1v_2 = e_1$ and $u \notin V(C)$,
			
			\item[(ii)] none of the edges in
			$\{v_1v_2, v_3v_4, \dots, v_{2k-1}v_{2k} \}$ are precolored,
			
			\item[(iii)] if any vertex in $\{v_1,\dots,v_{2k}\}$ is incident
			with a precolored
			edge, then this edge lies on $C$.
			\end{itemize}
			
		\end{lemma}
		
		\begin{proof}
			Let $M_1, \dots, M_{d-1}$ be the $d-1$ dimensional matchings
			in $Q_{d-1}$ and let $e_1 =wx \in M_1$. 
			Let $e_2= vw \in M_2$ be a precolored 
			edge adjacent to $e_1$. 
			
			We first consider the case when $e_1$ is adjacent to
			two precolored edges.
			If the other precolored edge $e_3$
			adjacent to $e_1$ is in $M_2$, then 
			$v$ is adjacent to an endpoint of $e_3$ via an edge
			from $M_1$, so
			there
			is trivially a $4$-cycle satisfying (i)-(iii).
			So we assume that $e_3 \in M_3$.
			Moreover, since $Q_{d-1}$ has no odd cycles, we may
			without loss of generality
			assume that
			$v$ and $x$ are both adjacent to $u$. Since any $4$-cycle
			has edges from exactly two dimensional matchings
			(which e.g. follows from Proposition \ref{prop:HavelMoravek} (ii)),
			this implies that $uv \in M_1$ and $ux \in M_2$.
			
			Consider the subgraph of $Q_{d-1}$ induced by the edges in
			$M_1 \cup M_2 \cup M_3$; by Lemma \ref{lem:DimMathInduce},
			this is a disjoint union of
			$3$-dimensional hypercubes. Let $F$ be the component
			of this subgraph containing $e_1$, $e_2$ and $e_3$.
			Since any precolored edge is adjacent to an edge incident with $u$,
			it follows that the edge of $M_3$ incident with $u$
			is adjacent to some precolored edge $e'$ that lies in $M_1$
			or in $M_j$ for some $j \geq 4$. 
			Moreover, 
			$e_3, e'$ and $e_2$ are the only precolored edges incident
			with vertices of $F$. 
			If $e' \in M_1$, then there is a $6$-cycle in $F$ containing
			$e_1, e_2, e_3, e'$ that satisfies (i)-(iii);
			if $e' \notin M_1$ then there is a $6$-cycle in $F$
			containing $e_1, e_2, e_3$, but no vertex incident with $e'$,
			which satisfies (i)-(iii).

			Suppose now that
			$e_1 =wx$ is adjacent to precisely one precolored edge $e_2 = vw$. 
			Since every precolored edge is adjacent to an edge incident with $u$,
			either $v$ or $w$ is adjacent to $u$.
			Let us first assume that $w$ is adjacent to $u$.
			Since $x$ is not incident to any precolored edge,
			and all precolored edges are adjacent to edges incident with $u$,
			the unique vertex $a \notin \{w,x,v\}$ 
			in the component of the subgraph $Q_d[M_1 \cup M_2]$ containing $e_1$
			is not incident with a precolored edge.
			Thus,
			there is a $4$-cycle $vwxav$ whose edges lie in $M_1 \cup M_2$
			and which satisfies (i)-(iii).		
			
			Let us now consider the case when $v$ is adjacent to $u$.
			Then we may assume that $e_3 = uv$ is in some dimensional matching
			distinct from $M_1$ and $M_2$, since $uv \in M_1$ implies that
			$x$ is adjacent to $u$
			and thus $x$ is incident with some precolored edge, contradicting our
			assumption.
			We assume $e_3 \in M_3$.
			As above we consider the
			subgraph of $Q_{d-1}$ induced by the
			edges in $M_1 \cup M_2 \cup M_3$.
			Let $F$ be the component of this induced subgraph
			containing $e_1$, $e_2$ and $e_3$.
			Straightforward case analysis shows that there is a
			$4$- or $6$-cycle satisfying (i)-(iii) unless
			every edge incident with $x$ in $F$ is 
			adjacent to a precolored edge of $F$.
				It remains to prove that if $d-1 \geq 4$, and every edge
			incident with $x$ in $F$ is adjacent to a precolored edge of $F$,
			then there is a cycle
			$C$ satisfying (i)-(iii).
			Consider the subgraph
			of $Q_{d-1}$ induced by $M_1 \cup M_2 \cup M_3 \cup M_4$.
			Let $K$ be the component of this induced subgraph containing
			$F$. Since 
			all
			precolored edges are adjacent to edges incident with $u$,
			$K$ contains at most one precolored edge not in $F$.
			Using these facts, it is straightforward that $K$ has
			a cycle containing all three precolored
			edges of $F$ and satisfying (i)-(iii).
		\end{proof}
	
	\begin{lemma}
		\label{cl:PartCol}
			Let $\varphi_1$ be an edge precoloring of $d-1$ 
			edges of $Q_{d-1}$ such that there is a vertex $u$ incident
			with an edge $e'$ precolored $2$, and where every other
			edge incident with $u$ is not precolored but adjacent
			to an edge precolored $1$. Let $e_1$ be some
			edge precolored $1$ in $Q_{d-1}$.
			There is a partial proper edge coloring
			$f_1$ of $Q_{d-1}$ with colors $1$ and $2$
			satisfying the following:
			
			\begin{itemize}
			
				\item[(i)] Any vertex of $Q_{d-1}$ is incident 
				with at least one edge that is
				colored under $f_1$;
				
				\item[(ii)] The coloring $f_1$ agrees 
				with $\varphi_1$ on any edge that is colored
				under $\varphi_1$;
				
				\item[(iii)] $e_1$ is contained in a cycle
				that is $(1,2)$-colored under $f_1$, and
				which does not contain $e'$.

			\end{itemize}
			
		\end{lemma}

		\begin{proof}
		Note that the conditions of the claim implies
		that $e_1$ is no incident with $u$, but an end  of $e_1$ is adjacent to $u$. 
		Let $M_1, M_2, M_3$ be three dimensional matchings 
			in $Q_{d-1}$
			that contain
			$e_1$, $e'$ and an edge adjacent to both $e'$ and $e_1$.
		
			The spanning subgraph of $Q_{d-1}$ induced by 
			$M_1 \cup M_2 \cup M_3$ is a disjoint union
			of copies of $Q_3$; let $F$ be the component 
			containing $e_1$ and $e'$.
			
			If $e_1$ and $e'$ lie in distinct dimensional matchings,
			then it is easy to see that there is
			a $4$-cycle $C_1$ in $F$ containing $e_1$ and no other
			precolored edge, and that satisfies that no vertex of $C_1$ is incident to
			a precolored edge that is not in $C_1$.
			We
			color the edges of $C_1$ by colors $1$ and $2$ alternately
			such that the coloring agrees with $\varphi_1$. 
			Additionally we retain the color 
			of any precolored edges of $F$, and we possibly 
			color one additional edge in
			$F$ by color $2$ so that every vertex of $F$ is 
			incident with a colored edge.
			Denote the obtained coloring of $F$ by $h_1$.
			
			Now, 
			since every precolored edge has one endpoint
			adjacent to $u$, every
			component $T$ of $Q_{d-1}[M_1 \cup M_2 \cup M_3]$ 
			distinct from $F$ contains at most one 
			precolored edge.
			Hence,
			there is
			a perfect matching $M_T$ of $T$ that does not 
			contain any precolored edge.
			We extend $h_1$ to a coloring of $Q_{d-1}$ satisfying (i)-(iii) by retaining
			the color of any 
			$\varphi_1$-precolored edge not in $F$, and for every component
			$T$ of $Q_d[M_1 \cup M_2 \cup M_3]$ distinct from $F$ we color
			every edge in $M_T$ by color $2$.

			Suppose now that $e_1$ and $e'$ lie in the same dimensional matching, 
			$M_1$ say. Then $e_1$ and $e'$ is contained in a $4$-cycle
			of $F$. Suppose that the edges of this cycle are in $M_1 \cup M_3$.
			If $M_3 \cap E(F)$ contains no $\varphi$-precolored edge, then $e_1$
			is contained in a $4$-cycle such that no vertex of this cycle
			is incident with another $\varphi$-precolored edge.
			On the other hand, if $M_3 \cap E(F)$ contains some precolored edge,
			then $e_1$ is contained in a $6$-cycle $C_2$ not containing
			$e'$, but two other precolored edges colored $1$.
			Moreover, no vertex of $C_2$ is incident to
			a precolored edge that is not in $C_2$.
			Thus there is a proper edge coloring $h_2$ of $C_2$ 
			with colors $1$ and $2$
			that agrees with $\varphi_1$.

			The coloring $h_2$ can be extended to a partial proper edge coloring
			of $Q_{d-1}$ satisfying (i)-(iii) by 
			proceeding as above.
		\end{proof}
	
	We now turn to the induction step of the proof of Theorem \ref{th:dprecol}.
	Henceforth, we shall always assume that $\varphi$ is a 
	proper $d$-edge precoloring
	of precisely $d$ edges in $Q_d$. Moreover, we assume that $M$ is a
	dimensional matching in $Q_d$ and that $H_1$ and $H_2$ are the components
	of $Q_d-M$; so $H_1$ and $H_2$ are both isomorphic to $Q_{d-1}$.
	As in the proof of Theorem \ref{th:hypercube},
	two edges of
	$H_1$ and $H_2$ are corresponding if their endpoints are joined
	by two edges of $M$. Similarly, two vertices are corresponding
	if they are joined by an edge of $M$.
	
	In the proofs of the lemmas we shall generally distinguish between
	the cases when there is a dimensional matching that contains
	no precolored edge, and when there is no such dimensional matching.

	
	\begin{lemma}
	\label{lem:mono}
		If all $d$ precolored edges in $Q_d$ have the same color
		and $\varphi \notin \mathcal{C}_3$, then $\varphi$
		is extendable.
	\end{lemma}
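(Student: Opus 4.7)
The plan is to proceed by induction on $d$; the base cases $d\in\{2,3\}$ are treated in the preamble to this section. Let $M^\ast$ denote the matching formed by the $d$ precoloured edges, all of colour~$1$. A direct inspection of the four conditions defining $\mathcal{C}$ shows that, in the monochromatic setting and for $d\ge 2$, only the third one can trigger, and it is equivalent to asserting that some $M^\ast$-unsaturated vertex has all of its neighbours $M^\ast$-saturated. Hence $\varphi\notin\mathcal{C}$ means precisely that every $M^\ast$-unsaturated vertex has an $M^\ast$-unsaturated neighbour.

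The whole argument rests on extending $M^\ast$ to a perfect matching $M^{\ast\ast}$ of $Q_d$: once this is done, colouring $M^{\ast\ast}$ by $1$ is consistent with $\varphi$, and $Q_d-M^{\ast\ast}$ is $(d-1)$-regular bipartite, hence by Theorem \ref{th:Galvin} (or K\"onig) admits a proper edge colouring with colours $\{2,\dots,d\}$, yielding a proper $d$-edge colouring of $Q_d$ extending $\varphi$. Following the convention announced before the lemma, I split the construction of $M^{\ast\ast}$ on whether some dimensional matching $\hat M$ of $Q_d$ is disjoint from $M^\ast$. If such $\hat M$ exists, decompose $Q_d-\hat M = H_1\cup H_2$ with $H_i\cong Q_{d-1}$ and let $a,b$ (with $a+b=d$) be the numbers of precoloured edges in $H_1,H_2$. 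In the generic case $1\le a,b\le d-2$, Theorem \ref{th:hypercube} applied in each $H_i$ supplies proper $(d-1)$-edge colourings of $H_1$ and $H_2$ extending $\varphi|_{H_i}$ and using the palette $\{1,\dots,d-1\}$, and colouring $\hat M$ with $d$ completes the argument. In the boundary subcases $(a,b)\in\{(d,0),(d-1,1)\}$ (up to symmetry) I would invoke the inductive hypothesis of the present lemma on $\varphi|_{H_1}$ in $Q_{d-1}$, after first showing --- by case analysis around a potential $\mathcal{C}$-obstruction in $H_1$ --- that either $\varphi\in\mathcal{C}$ already in $Q_d$ (contradiction) or the offending configuration is avoidable by re-selecting $\hat M$ among the other free dimensional matchings.

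If, on the other hand, no dimensional matching is disjoint from $M^\ast$, pigeonhole forces $M^\ast$ to meet each of the $d$ dimensional matchings in exactly one edge, so $M^\ast$ is a dimension-transversal. Here, with no free dimensional matching to remove, I argue directly that $M^\ast$ extends to a perfect matching of $Q_d$ via Hall's theorem on the bipartite sub-graph of $Q_d$ induced by the set $U$ of $M^\ast$-unsaturated vertices. The hypothesis $\varphi\notin\mathcal{C}$ gives Hall's condition for singleton subsets of $U$; for larger subsets Hall's condition follows from the vertex-expansion properties of the hypercube, combined with the fact that $|M^\ast|=d$ is tiny compared to $2^{d-1}$, so that the at most $d$ matched vertices in the other bipartition class are easily absorbed into the expansion estimate.

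The principal obstacle is the Hall-condition verification in the second case: one must combine the local information from $\varphi\notin\mathcal{C}$ at singletons with a global hypercube-expansion estimate that covers all subset sizes up to $|U|/2 = 2^{d-1}-d$, ideally via an estimate of the form $|N_{Q_d}(S)| \ge |S|+d$ valid for $2\le |S| \le 2^{d-1}-d$. A secondary technicality is the boundary sub-cases of the first case, where the inductive hypothesis of this lemma in $Q_{d-1}$ must be justified by a careful choice of the free dimensional matching $\hat M$ that rules out $\mathcal{C}$-obstructions in the restricted precolouring $\varphi|_{H_1}$, essentially a book-keeping argument about how $\mathcal{C}$-configurations propagate across the dimensional decomposition of $Q_d$.
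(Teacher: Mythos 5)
Your opening reduction is the same as the paper's: it suffices to extend the monochromatic matching $M^\ast$ to a perfect matching of $Q_d$, and your observation that in the monochromatic setting only the third condition of $\mathcal{C}$ is relevant is correct. The generic subcase $1\le a,b\le d-2$ via Theorem \ref{th:hypercube} is also fine. But the two places you flag as ``obstacles'' are not technicalities --- they are the entire content of the lemma, and as sketched both arguments fail. First, in the boundary subcase $(a,b)=(d-1,1)$ your plan only addresses whether the inductive hypothesis applies inside $H_1$; it says nothing about how to reconcile the resulting perfect matching of $H_1$ with the one precolored edge $v_2w_2$ sitting in $H_2$. Mirroring the $H_1$-matching into $H_2$ will in general cover the endpoints of $v_2w_2$ by other edges, and this conflict cannot be wished away by ``re-selecting $\hat M$'': when exactly one dimensional matching is free of precolored edges there is nothing to re-select. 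This coordination problem is precisely what the paper's Claim \ref{cl:onecolor} is for --- one builds an even cycle through $v_1w_1$ avoiding the obstructing vertex and alternating precolored/unprecolored edges, and then takes complementary alternating halves of that cycle in $H_1$ and $H_2$. Nothing in your proposal substitutes for this step. (Your $(d,0)$ subcase is also mis-handled: $\varphi|_{H_1}$ then has $d$ precolored edges in $Q_{d-1}$, outside the scope of the inductive hypothesis; that case is in fact easy, by mirroring $M^\ast\cap E(H_1)$ into $H_2$ and completing with edges of $\hat M$, but not by the route you state.)

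Second, in the transversal case your proposed isoperimetric bound $|N_{Q_d}(S)|\ge |S|+d$ for $2\le |S|\le 2^{d-1}-d$ is false at the top of the range: if $S$ is the complement, within one colour class, of the neighbourhood of a single odd vertex $w$, then $|S|=2^{d-1}-d$ and $|N(S)|=2^{d-1}-1=|S|+d-1$. So a uniform expansion estimate cannot carry Hall's condition by itself; near the extremes you must re-inject the structure of $M^\ast$ and the assumption $\varphi\notin\mathcal{C}$ (which is exactly the singleton/co-singleton Hall condition), and in the intermediate range you still owe a proof of whichever Harper-type inequality you intend to use, including the tight small cases ($|S|=2$ gives $|N(S)|\ge 2d-2$, which meets $|S|+d$ only for $d\ge 4$). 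The paper avoids this entirely: even when every dimensional matching meets $M^\ast$, it still splits off the dimensional matching containing exactly one precolored edge and runs the same $H_1/H_2$ mirroring argument, with Claim \ref{cl:onecolor} again doing the work in the hard subcase. As it stands, your proposal establishes the easy cases and correctly identifies where the difficulty lies, but does not prove the lemma.
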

	\begin{proof} 
		Suppose that the color used by $\varphi$ is $1$.
		It follows from K\"onig's edge coloring theorem
		that for proving the lemma, it suffices to show that
		there is a perfect matching in $Q_d$ containing all edges
		precolored $1$.
		
		\bigskip
		
		\noindent
		{\bf Case 1.} {\em Every dimensional matching contains a precolored edge:}
		
		\medskip
		
		The assumption implies that $M$ 
		contains precisely one edge $u_1 u_2$ colored $1$,
		where $u_i \in V(H_i)$.

		\bigskip
		
		\noindent
		{\bf Case 1.1.} {\em No precolored edges are in $H_2$:}
		
		\medskip
		
		The conditions imply that
		$d-1$ precolored edges are in $H_1$. By coloring
		the edges of $H_2$ corresponding to the
		precolored edges of $H_1$ by color $1$, 
		coloring all edges of $M$ that are not adjacent to any colored
		edges by color $1$, we obtain a partial coloring where the precolored
		edges form a perfect matching of $Q_d$; thus $\varphi$ is extendable.
		
		\bigskip
		
		\noindent
		{\bf Case 1.2.} {\em Both $H_1$ and $H_2$ contain at most $d-3$
		precolored edges:}
		
		\medskip
		
		Suppose that there is a vertex $x_1$ of $H_1$ adjacent to $u_1$
		such that neither $x_1$ nor the vertex $x_2$ of $H_2$ corresponding 
		to $x_1$
		is incident with a precolored edge. Consider 
		the precoloring of $H_1$ obtained
		from the restriction of $\varphi$ to $H_1$
		by in addition coloring $x_1u_1$ with $1$.
		By Theorem \ref{th:hypercube}, this precoloring
		is extendable
		to a proper $(d-1)$-edge coloring $f_1$ of $H_1$; and similarly
		there is an extension $f_2$
		of the precoloring of $H_2$ obtained from the 
		restriction of $\varphi$ to $H_2$ by in addition coloring
		$u_2x_2$ by color $1$; this
		is evident since the obtained precolorings of $H_1$ and $H_2$, respectively,
		both contain at most $d-2$ precolored edges.
		We now define a perfect matching containing
		all $\varphi$-precolored edges of $Q_d$ by removing 
		$u_1x_1$ and $u_2x_2$
		from the union of all edges colored $1$ under $f_1$ or $f_2$,
		and adding the edges $u_1u_2$ and $x_1x_2$. We conclude that
		$\varphi$ is extendable.
		
		Now suppose that for each neighbor $x_1$ of $u_1$ either
		$x_1$ or the corresponding vertex $x_2$ of $H_2$
		is incident with a precolored
		edge. Since $Q_d$ is $d$-regular and contains altogether
		$d$ precolored edges, this implies that all precolored edges have one
		end which is adjacent to
		either $u_1$ or $u_2$.
		Now, since $Q_d$ contains $d$ precolored edges,
		$M$ contains one precolored edge,
		and both $H_1$ and $H_2$ contain at most $d-3$ 
		precolored edges, $(d-3)+(d-3)+1 \geq d$, and so $d \geq 5$. Thus
		$u_1$ is adjacent to at least two 
		vertices incident with precolored
		edges in $H_1$, and $u_2$ is adjacent to two vertices 
		of $H_2$ incident
		with precolored edges. 
		
		We shall need the following claim.
		
		\begin{claim}
			There is a dimensional matching $M_j$
			and a precolored edge $vv' \in M_j$
			such that not every other precolored
			edge has one end adjacent to either $v$ or $v'$.
		\end{claim}
		
		\begin{proof}
		Recall that Proposition \ref{prop:HavelMoravek}
		holds if we take the dimensional matchings of $Q_d$
		as the colors in the proposition.
		Let $M_1,\dots, M_d$
		be the dimensional matchings in $Q_d$, where $M_1=M$. Without
		loss of generality, we assume that there are
		precolored edges $e_j=a_jb_j \in M_j$ and
		$e_k=a_kb_k \in M_k$,
		such that $b_j$ and $u_1$ are adjacent and $u_1b_j \in M_2$,
		and $b_k$ and $u_1$ are adjacent and $u_1b_k \in M_3$.
		If no endpoint of $e_j$ is adjacent to an endpoint of $e_k$, then
		we are done, so suppose, without loss of generality, that
		$a_j$ and $b_k$ are adjacent. By Proposition \ref{prop:HavelMoravek} (ii),
		this means that $a_jb_j \in M_3$ and $a_jb_k \in M_2$.
		Now, $H_2$ contains at least one precolored edge 
		$ab$, where either $a$ or $b$ is adjacent
		to $u_2$ via an edge from a dimensional matching that is distinct
		from $M_2$ and $M_3$, because otherwise, as for $H_1$, it would follow
		that at least one precolored edge of $H_2$ would be in $M_2$ or $M_3$;
		a contradiction to the assumption that all precolored edges are in
		distinct dimensional matchings.
		Thus, without loss of generality, we assume that
		$u_2a \in M_4$. Moreover, since all precolored edges lie 
		in distinct dimensional matchings $ab \notin M_1 \cup M_3$.
		Hence, all edges on the path $a_jb_ju_1u_2a$ are in different
		dimensional matchings.
		Again using Proposition \ref{prop:HavelMoravek} (ii), it thus follows
		that no endpoint of $ab$ is adjacent to an endpoint of $a_jb_j$.
		We conclude that there is a dimensional matching $M_j$
		and a precolored edge $vv' \in M_j$
		such that not every other precolored
		edge has one end adjacent to either $v$ or $v'$.
		\end{proof}
		
		Let $M_j$ be a dimensional matching as in the preceding claim.
		Then the graph $Q_d -M_j$,
		consists of two
		copies $J_1$ and $J_2$ of $Q_{d-1}$.
		Moreover, if
		both $J_1$ and $J_2$ contain at most $d-3$ precolored edges,
		then we may proceed as above for obtaining an extension
		of $\varphi$. Moreover, if $d-1$ precolored
		edges lie in $J_1$, then we proceed as in Case 1.1.
		We conclude that it suffices to consider the case when
		$d-2$ edges of $H_1$ (or $H_2$) are precolored.

			\bigskip
		
		\noindent
		{\bf Case 1.3.} {\em $H_1$ contains $d-2$ precolored edges and $H_2$ 
		contains one	precolored edge:}
		
		\medskip

		Denote by 
		$v_2w_2$ the precolored edge of $H_2$ and let 
		$v_1$ and $w_1$ be the vertices of $H_1$ corresponding
		to $v_2$ and $w_2$, respectively. If no precolored edge
		is incident with $v_1$ or $w_1$, then we may color $v_1w_1$ with color
		$1$, and then color all edges of $H_2$ corresponding to precolored
		edges of $H_1$ by color $1$.
		The resulting coloring is extendable, since by coloring
		any edge of $M$ (including $u_1u_2$), which is 
		not adjacent to a colored edge, by color $1$,
		the precolored edges form a perfect matching of $Q_d$,
		as required.
		
		Thus, we may assume that some $\varphi$-precolored 
		edge in $H_1$ is incident
		with $v_1$ or $w_1$, say $w_1$. 
		Since there are $d-2$ precolored edges in $H_1$,
		the restriction of $\varphi$ to $H_1$ is extendable; in 
		particular, there is a perfect matching $M^*$ in $H_1$ 
		containing all
		precolored edges of $H_1$.
		Note that the edge of $M^*$ incident
		with $u_1$ is not incident with $w_1$.
		If $u_1v_1 \notin M^*$, then let
		$e'$ be the edge of $H_2$ corresponding to the edge of $M^*$
		incident with $u_1$. Then the precoloring of $H_2$ where
		$e'$ and $v_2w_2$ are colored $1$ is extendable, in particular
		there is perfect matching $M^*_2$
		in $H_2$ containing both these edges.
		By removing the edge $e'$ from $M^*_2$, removing the corresponding
		edge from $M^*$ and including two edges from $M$, we obtain a perfect
		matching in $Q_d$ containing all precolored edges of $\varphi$;
		hence, the coloring $\varphi$ is extendable.
		
		Thus, we may assume that $u_1v_1 \in M^*$, and, consequently,
		$v_1$ is not incident to any $\varphi$-precolored edge.
		Moreover, if $u_1$ is the only neighbor of $v_1$ that is
		not incident with a precolored edge of $H_1$, then
		$\varphi \in \mathcal{C}_3$, because all neighbors
		of $v_1$ are incident with a precolored edge in $Q_d$.
		Thus, there is a neighbor $y \neq u_1$
		of $v_1$ in $H_1$ that is not incident with any precolored edge.

		Consider the precoloring $\psi$ of $H_1$ obtained from the restriction
		of $\varphi$ to $H_1$
		by also coloring $v_1y$ by color $1$. If $\psi$ is extendable
		to a proper $(d-1)$-edge coloring $\psi'$ of $H_1$
		then in the matching of $H_1$ containing all edges with
		color $1$ under $\psi'$,
		$u_1$ is matched to some vertex distinct from $v_1$, and, as before,
		this implies that $\varphi$ is extendable.
		Thus it suffices to consider the case when $\psi$ is not extendable
		to a proper edge coloring of $H_1$.
		Since there are exactly $d-1$ precolored edges under $\psi$,
		all of which have the same color,
		by the induction hypothesis, there is some vertex $a$
		of $H_1$ that is not incident with any $\psi$-precolored edge,
		but all neighbors of $a$ are incident with $\psi$-precolored edges.
		We shall prove that this property also holds
		for the vertex $u_1$ unless $\varphi$ is extendable.
		
		\begin{claim}
			Every neighbor of $u_1$ in $H_1$ is incident with
			a $\psi$-precolored edge unless $\varphi$ is extendable.
		\end{claim}
		
		\begin{proof} 
		Assume to the contrary that $u_1$
		does not have this property. Then there is a
		neighbor $z \neq v_1$ of $u_1$ that is not incident
		to any $\varphi$-precolored edge.
		Let $\alpha$
		be the precoloring of $H_1$ obtained from
		the restriction of $\varphi$ to $H_1$ by coloring the edge
		$u_1z$ by color $1$. As we have seen above, if any of the
		precolorings $\psi$ or $\alpha$ of $H_1$
		is extendable (in $H_1$) to a proper $(d-1)$-edge coloring,
		then $\varphi$ is extendable (because in both these extensions
		$u_1$ is matched to some other vertex than $v_1$ in the matching
		induced by color $1$.)
		
		We conclude that since neither of $\alpha$ and $\psi$ are extendable,
		there are vertices $b_1$ and $b_2$ such that under $\alpha$
		every neighbor of $b_1$ in $H_1$ is incident with a precolored
		edge, and under $\psi$ every neighbor of $b_2$ in $H_1$ is incident
		with a precolored edge. Note that $b_1 \neq b_2$ because
		the vertices $u_1,v_1,y,z$ are all distinct and all
		vertices in $H_1$ have degree $d-1$ in $H_1$. 
		Since $d-1 \geq 3$, $b_1$ and $b_2$ are both
		adjacent to endpoints of at least two distinct $\varphi$-precolored
		edges. Hence, the distance $d(b_1,b_2)$ between $b_1$ and $b_2$
		is at least $1$ and at most $3$.
		We consider some different subcases.
		
		\bigskip
		
		\noindent
		{\bf Subcase A.} {\em $d(b_1,b_2)=1$:}
		
		\medskip
		
		Since $d(b_1,b_2)=1$ and $b_1$ and $b_2$ are both adjacent
		to endpoints of at least two distinct $\varphi$-precolored
		edges $e_1$ and $e_2$ in $H_1$, there are two $4$-cycles containing
		$e_1$ and $b_1b_2$, and $e_2$ and $b_1b_2$, respectively.
		However, this implies that $e_1$ and $e_2$ are in the same dimensional
		matching; a contradiction to the assumption of Case 1.
		We conclude that the case $d(b_1,b_2)=1$ is not possible.
		
		\bigskip
		
		\noindent
		{\bf Subcase B.} {\em $d(b_1,b_2)=2$:}
		
		\medskip
		
		In this case, it follows that $b_1$ and $b_2$ have a common neighbor which
		is incident to an  edge which is precolored under $\varphi$. 
		Then, since $H_1$ is bipartite, $b_1$
		and $b_2$ are adjacent to the same end of every edge which is
		precolored under $\varphi$. 
		If $d-1 =3$, then $H_1$ contains two $\varphi$-precolored edges
        that lies in the same dimensional matching, because $b_1$ and $b_2$
        lie on a common  $4$-cycle with edges from exactly two dimensional matchings;
        a contradiction to the assumption of Case 1.
		If $d-1 \geq 4$, then $H_1$ has at least $3$ $\varphi$-precolored edges,
		and thus
		two adjacent edges of $H_1$ lie on at least two
		distinct $4$-cycles; a contradiction because $H_1$
		is isomorphic to $Q_{d-1}$.
		We conclude that the case $d(b_1,b_2)=2$ is not possible.
		
		\bigskip
		
		\noindent
		{\bf Subcase C.} {\em $d(b_1,b_2)=3$:}
		
		\medskip
		
		If $d(b_1, b_2)=3$, then
		$b_1$ and $b_2$ are adjacent to distinct
		ends of an edge which is precolored under $\varphi$.
		Since $H_1$ is bipartite, this implies that $b_1$ and $b_2$
		are adjacent to distinct endpoints of every edge that is
		precolored under $\varphi$. 
		If $d-1=3$, then $H_1$ contains two $\varphi$-precolored edges,
		and there is exactly one edge of $H_1$ that we can color $1$
		so that $b_1$ or $b_2$ is adjacent to three vertices all of which
		are incident with an edge colored $1$. This contradicts that
		the vertices $u_1, v_1, y, z$ are all distinct.
		
		Assume now that $d-1 \geq 4$. Then $b_1$ and $b_2$
		are adjacent to distinct endpoints of at least three
	    $\varphi$-precolored edges that lie in distinct dimensional
	    matchings.
		In fact, we must have
		$d-1 = 4$. Indeed, recall that Proposition 
		\ref{prop:HavelMoravek} holds if we take the colors to be the
		dimensional matchings of $Q_d$.
		It then follows from Proposition \ref{prop:HavelMoravek} (ii)
		that two vertices in a hypercube are endpoints
		of at most three distinct paths of length $3$, where
		any two central edges of the paths are in distinct dimensional
		matchings.
		Furthermore, since all edges of these three distinct paths 
		with endpoints $b_1$ and $b_2$ must lie in three 
		distinct dimensional matchings (which again follows
		from Proposition \ref{prop:HavelMoravek} (ii)),
		these paths induce a hypercube $F$ of dimension $3$.
		Now, since in $H_1$, $u_1$ is adjacent to at least two vertices 
		that are not incident with any $\varphi$-precolored edges,
		$u_1 \notin V(F)$. Moreover, $v_1 \notin \{b_1, b_2\}$,
		because $v_1$ has at least two neighbors that are not
		incident with any $\varphi$-precolored edges of $H_1$.
		Now, since $d-1=4$, and all $\varphi$-precolored edges of $H_1$
		are in $F$, this implies that
		there is a perfect matching of $H_1$ containing
		all $\varphi$-precolored edges of $H_1$, and where
		$u_1$ is matched to some other vertex than $v_1$; 
		as before,
		this implies that $\varphi$ is extendable.
		\end{proof}

		From the preceding claim,
		we conclude that we may assume that $u_1$ is not incident to any
		$\psi$-precolored edge, but every neighbor of $u_1$ is incident
		with a $\psi$-precolored edge.
		
		Now, since all $\varphi$-precolored edges of $H_1$ are also 
		$\psi$-precolored,
		both ends of $v_1w_1$ are incident with $\psi$-precolored edges.
		Hence, by Lemma \ref{cl:onecolor}, there is a
		cycle $C=a_1a_2 \dots a_{2k} a_1$ of even length such that
		
		\begin{itemize}
		
			\item[(i)] $a_1 =v_1, a_2=w_1$ and $u_1 \notin V(C)$;
			
			\item[(ii)] none of the edges in 
			$\{a_1a_2, a_3a_4, \dots, a_{2k-1}a_{2k} \}$ are 
			$\psi$-precolored in $H_1$,
			
			\item[(iii)] if any vertex in $\{a_1,\dots,a_{2k}\}$ is incident 
			with a precolored
			edge, then this edge lies on $C$.
			\end{itemize}
			
			From the precoloring $\psi$ of $H_1$ we define 
			another precoloring $\psi_1$ of $H_1$ by coloring
			all uncolored edges in $\{a_2a_3, a_4a_5,\dots, a_{2k}a_1\}$
			by color $1$ and retaining the color
			of every other edge. Next,
			we define a precoloring $\psi_2$ of $H_2$ 
			by coloring all edges of $H_2$
			corresponding to the edges in
			$\{a_1a_2, a_3a_4, \dots, a_{2k-1}a_{2k}\}$
			by color $1$; furthermore, for any edge of $H_1$ 
			which is $\psi_1$-precolored
			and does not lie on $C$, we color the 
			corresponding edge of $H_2$ by $1$.
		
			Note that a vertex of $H_2$ is incident with a $\psi_2$-precolored
			edge if and only if the corresponding vertex of $H_1$
			is incident with a $\psi_1$-precolored
			edge. Moreover, any edge in $Q_d$ which is
			precolored under $\varphi$ is also
			precolored under $\psi_1$ or $\psi_2$. Hence,
			we obtain an extension of $\varphi$ from $\psi_1$ and $\psi_2$ by
			coloring any edge of $M$ which is not
			incident with a $\psi_1$-precolored edge
			by color $1$.
		
		\bigskip
		
		\noindent
		{\bf Case 2.} {\em There is a
		dimensional matching containing no precolored edge:}
		
		\medskip

		Without loss of generality, we assume that no edge of $M$
		is precolored.
		
		\bigskip
		
		\noindent
		{\bf Case 2.1.} {\em No precolored edges are in $H_2$:}
		
		\medskip
		
		If all precolored edges lie in $H_1$, then the precoloring
		is extendable, since by coloring the edges of 
		$H_2$ corresponding to the precolored edges of $H_1$ by color $1$, 
		and then coloring the edges of $M$
		not adjacent to precolored edges by color $1$, we obtain
		a monochromatic perfect matching of $Q_d$ which contains all
		$\varphi$-precolored edges of $Q_d$. 
		
		\bigskip
		
		\noindent
		{\bf Case 2.2.} {\em Both $H_1$ and $H_2$ contain at most $d-2$
		precolored edges:}
		
		\medskip
		
		If both $H_1$ and $H_2$
		contain at most $d-2$ precolored edges, then by Theorem \ref{th:hypercube}, 
		the restriction of $\varphi$ to $H_i$ 
		is extendable to $(d-1)$-edge coloring of $H_i$, $i=1,2$;
		thus $\varphi$ is extendable.
		
			\bigskip
		
		\noindent
		{\bf Case 2.3.} {\em $H_1$ contains $d-1$ precolored edges and $H_2$ 
		contains one	precolored edge:}
		
		\medskip
		
		As in Case 1.3,
		we may assume that the edge $v_1w_1$ of $H_1$, 
		corresponding to the precolored
		edge $v_2w_2$ of $H_2$,
		is adjacent to at least one precolored edge of $H_1$,
		since otherwise $\varphi$ is extendable.

		Now, by the induction hypothesis,
		the restriction of $\varphi$ to $H_1$
		is extendable (and thus there is an extension of $\varphi$) unless
		there is a vertex $u \in V(H_1)$ not incident to any precolored edge, 
		and satisfying that all neighbors 
		of $u$ in $H_1$
		are incident with precolored edges.
		Furthermore, if $v_1=u$ or $w_1=u$, then
		clearly $\varphi \in \mathcal{C}_3$, so we assume that 
		$u \notin \{v_1,w_1\}$.

		If $d-1 = 3$, and one end of $v_1w_1$ is not incident to any precolored
		edge, but all neighbors of $v_1$ or $w_1$ are incident with
		precolored edges, then $\varphi \in \mathcal{C}_3$.
		Thus, since $\varphi \notin \mathcal{C}_3$, 
		and $v_1w_1$ is adjacent to at least one precolored edge,
		it follows from Lemma \ref{cl:onecolor} that there is a
		cycle $C=v_1v_2 \dots v_{2k} v_1$ of even length such that
		
		\begin{itemize}
		
			\item[(i)] 
			$v_2=w_1$, $u \notin V(C)$;
			
			\item[(ii)] none of the edges in
			$\{v_1v_2, v_3v_4, \dots, v_{2k-1}v_{2k} \}$ are
			$\varphi$-precolored in $H_1$,
			
			\item[(iii)] if any vertex in $\{v_1,\dots,v_{2k}\}$ is incident
			with a precolored
			edge, then this edge lies on $C$.
			\end{itemize}
			
			We may now finish the proof in this case by proceeding exactly as
			in Case 1.3 above,
			using the cycle $C$ to construct a precoloring of $H_2$.	
	\end{proof}


	\begin{lemma}
	\label{lem:twocolors}
		If only two distinct colors appear in the precoloring $\varphi$ of $Q_d$
		and $\varphi \notin \mathcal{C}$,
		then $\varphi$ is extendable.
	\end{lemma}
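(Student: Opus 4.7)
The plan is to mirror the proof of Lemma \ref{lem:mono} by induction on $d$, leveraging Lemma \ref{lem:mono} itself in the reductions. Write the two colors appearing in $\varphi$ as $1$ and $2$, with $a \geq 1$ edges colored $1$ and $b = d - a \geq 1$ edges colored $2$, and assume by symmetry $a \geq b$. The base cases $d \in \{2,3\}$ are covered in the base step of Theorem \ref{th:dprecol}, so take $d \geq 4$. As in Lemma \ref{lem:mono}, the argument splits according to whether every dimensional matching of $Q_d$ contains a precolored edge (Case 1) or some dimensional matching $M$ does not (Case 2).

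In Case 2, write $Q_d - M = H_1 \sqcup H_2$ with each $H_i \cong Q_{d-1}$, and let $n_i$ denote the number of precolored edges in $H_i$, so $n_1 + n_2 = d$; by symmetry take $n_1 \geq n_2$. If $n_1 \leq d - 2$, Theorem \ref{th:hypercube} extends each $\varphi|_{H_i}$ to a proper $(d-1)$-edge coloring of $H_i$ in colors $\{1, \dots, d-1\}$, and coloring $M$ with color $d$ yields the extension. When $n_1 \in \{d-1, d\}$, apply the inductive form of Theorem \ref{th:dprecol} to $H_1$, and use a mirror-correspondence argument exactly as in Cases~1.1 and~1.3 of Lemma \ref{lem:mono} to transport the coloring across $M$ into $H_2$; the edges of $M$ are then colored by, at each edge, using the color missing at both its endpoints.

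In Case 1, each of the $d$ dimensional matchings contains exactly one precolored edge. Choose $M$ containing a color-$2$ precolored edge (possible since $b \geq 1$); again let $Q_d - M = H_1 \sqcup H_2$. The remaining $d - 1$ precolored edges split between $H_1$ and $H_2$, and the analogous $(n_1, n_2)$-case analysis reduces most configurations to the inductive hypothesis. A useful simplification here is that, after coloring the uncolored edges of $M$ with color $2$, the problem reduces to extending a precoloring that uses only color $1$ in $H_1 \cup H_2$, i.e.\ to a pair of monochromatic instances of Lemma \ref{lem:mono}, provided the edges of $M$ are compatible with the precolored edges adjoining $M$ at each endpoint.

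The main obstacle is the subcase $n_1 \in \{d-1, d\}$ together with $\varphi|_{H_1} \in \mathcal{C}$ (as a precoloring of $Q_{d-1}$), where the inductive hypothesis does not directly yield an extension of $\varphi|_{H_1}$. The resolution should mirror Case~1.3 of Lemma \ref{lem:mono}: construct an alternating cycle in $H_1$, an analog of the cycle furnished by Claim \ref{cl:onecolor} but adapted to two colors, along which the colors can be exchanged so that the modified precoloring leaves $\mathcal{C}$ and the coloring can then be propagated across $M$ into $H_2$. Establishing the existence of such a cycle under the two-color constraints, and verifying that the propagated coloring of $H_1 \cup M \cup H_2$ remains proper, is the key technical step.
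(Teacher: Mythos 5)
Your proposal is an outline that matches the paper's overall strategy (split on whether some dimensional matching $M$ is free of precolored edges, then case on how the $d$ precolored edges distribute over the two copies $H_1,H_2$ of $Q_{d-1}$), but it stops short of the actual content of the lemma. The easy cases you dispose of --- both $H_i$ having at most $d-2$ precolored edges, or all precolored edges in one $H_i$ with a benign restriction --- are also easy in the paper. The entire difficulty is the configuration you flag at the end: $H_1$ carries $d-1$ (or $d-2$) precolored edges and $\varphi|_{H_1}\in\mathcal{C}$, so the induction hypothesis gives nothing for $H_1$. Writing ``the resolution should mirror Case~1.3 of Lemma~\ref{lem:mono}: construct an alternating cycle \dots\ is the key technical step'' defers exactly the step the lemma exists to prove. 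It is not a routine adaptation of Claim~\ref{cl:onecolor}: with two colors the obstruction at a vertex $u$ of $H_1$ can involve a precolored edge $e'$ of color $2$ at $u$ (the paper's subcase~(a)), and one must produce a partial proper coloring of $H_1$ in which the troublesome edge $e_1$ lies on a $(1,2)$- or $(1,3)$-colored cycle \emph{avoiding} $e'$ while every vertex of $H_1$ is covered; the paper needs Claim~\ref{cl:cycle} (a perfect matching of $Q_3$ avoiding any independent edge set) to handle the other $Q_3$-components, K\"onig's theorem to complete the partial coloring, and then a color interchange along the corresponding cycle in $H_2$ to make the missing colors at the two ends of each $M$-edge agree. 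None of this is present or sketched in your proposal.

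There is also a concrete flaw in your treatment of Case~1. The ``useful simplification'' of coloring all uncolored edges of $M$ with color $2$ and reducing to two monochromatic instances of Lemma~\ref{lem:mono} only makes sense when the unique color-$2$ precolored edge lies in $M$ and no color-$2$ edge lies in $H_1\cup H_2$; as soon as some precolored edge of $H_1$ or $H_2$ has color $2$ (which happens whenever $b\ge 2$, and also when $b=1$ but the color-$2$ edge is not in $M$), a vertex of that edge would receive color $2$ twice. Your caveat ``provided the edges of $M$ are compatible'' concedes this but does not say what to do otherwise; the paper instead runs a separate analysis (Cases~1.1--1.3) including the decomposition into subgraphs $J_1,J_2$ induced by the dimensional matchings carrying each color, and a careful treatment of when the recolored restrictions land in $\mathcal{C}$. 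Even in the monochromatic reduction you propose, the restriction of the color-$1$ precoloring to $H_1$ may itself be in $\mathcal{C}$ while $\varphi\notin\mathcal{C}$, so Lemma~\ref{lem:mono} does not apply as stated. As written, the proposal identifies the skeleton of the argument but leaves the load-bearing steps unproved.
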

	\begin{proof}
		Without loss of generality we shall assume that color $1$ and
		$2$ appear on edges under $\varphi$.
		
			\bigskip
		
		\noindent
		{\bf Case 1.} {\em Every dimensional matching contains a precolored edge:}
		
		\medskip
		
		Without loss of generality, we assume that $M$ contains an edge 
		$e_M = u_1u_2$
		precolored $1$ under $\varphi$, where $u_i \in V(H_i)$.
		
		\bigskip
		
		\noindent
		{\bf Case 1.1.} {\em No precolored edges are in $H_2$:}
		
		\medskip

		Suppose that color $1$ does not appear in the restriction $\varphi_1$
		of $\varphi$ to $H_1$. If $\varphi_1$ is extendable to a proper
		edge coloring of $H_1$ using colors $2,\dots,d$, then we obtain an
		extension of $\varphi$ by coloring $H_2$ correspondingly, and then 
		coloring all edges of $M$ by color $1$. So assume
		that there is no such extension of $\varphi_1$. By the induction
		hypothesis, there is a vertex $u$ in $H_1$ that is not incident with
		any precolored edge, but all vertices in $H_1$ adjacent to $u$ are incident
		with an edge precolored $2$. If $u$ is an endpoint of $e_M$, then
		$\varphi \in \mathcal{C}_2$; so we assume that this is not the case.
		Thus, either there is an edge $e'$ incident with $u_1$ colored
		$2$, or we can select $e'$ to be an arbitrary edge of $H_1$
		that is incident with $u_1$ but not adjacent to any edge precolored $2$.
		In both cases, we define a 
		precoloring $\varphi'_1$
		of $H_1$ by coloring $e'$ by color $1$. 
		Then trivially there is a proper edge coloring $f_1$ of $H_1$ using
		colors $1, 3\dots,d$ that agrees with $\varphi'_1$. From $f_1$,
		we define a proper edge coloring $f'_1$ by recoloring all edges
		that are precolored $2$ under $\varphi$ by color $2$
		and also recoloring $e'$ with color $2$. This yields a coloring
		of $H_1$ that agrees with the restriction of $\varphi$ to $H_1$
		and where color $1$ does not appear at an end of $e_M$. 
		Hence, we may color $H_2$ correspondingly, and then color every edge
		of $M$ by the color in $\{1,\dots,d\}$ 
		missing at its endpoints in order to obtain an 
		extension of $\varphi$.
		
		Suppose now that color $1$ does appear on some edge of $H_1$.
		By removing the color from any edge of $H_1$ that is precolored $1$,
		we obtain a precoloring $\varphi_1$ of $H_1$. 
		By Theorem \ref{th:hypercube},
		there is a proper edge coloring of $H_1$ using colors
		$2,\dots,d$ that agrees with $\varphi_1$. Now, by recoloring any edge
		of $H_1$ that is $\varphi$-precolored $1$ by color $1$, thereafter
		coloring $H_2$ correspondingly, and finally coloring all edges
		of $M$ by the unique color missing at its endpoints, we obtain
		an extension of $\varphi$.

				\bigskip
		
		\noindent
		{\bf Case 1.2.} {\em Both $H_1$ and $H_2$ contain at most $d-3$
		precolored edges:}
		
		\medskip

		The conditions imply 
		that $d \geq 5$. If there is an edge $e_1$ 
		in $H_1$ adjacent to $e_M$, and such that neither $e_1$ nor the corresponding
		edge $e_2$ of $H_2$ is colored under $\varphi$, and neither of $e_1$ and
		$e_2$ is adjacent to an edge precolored $1$ 
		under $\varphi$ distinct from $e_M$,
		then we color $e_1$ and $e_2$ by color $1$,
		and consider the precolorings
		of $H_1$ and $H_2$ obtained from the restriction of $\varphi$ 
		to $H_1$ and $H_2$, respectively,
		along with coloring $e_1$ and $e_2$ by color $1$.
		By Theorem \ref{th:hypercube}, 
		these colorings are extendable to proper $(d-1)$-edge colorings 
		$f_1$ and $f_2$ of $H_1$ and $H_2$, respectively.
		Now, by recoloring $e_1$ and $e_2$ by color $d$ and then coloring all
		edges of $M$ by the color missing at its endpoints we obtain the 
		required extension of $\varphi$.
		
		Now suppose that there are no edges $e_1$ and $e_2$
		as described in the preceding
		paragraph. 
		Since $Q_d -M$ contains exactly $d-1$ precolored edges,
		and $H_1$ and $H_2$ are $(d-1)$-regular bipartite graphs, this implies that
		any edge colored $2$ under $\varphi$
		is adjacent to $e_M$, and any edge colored $1$ under $\varphi$ is
		adjacent to an edge $e'$ that is adjacent to $e_M$.
		Thus either one or two edges in $Q_d$ are colored $2$
		under $\varphi$.
		
		Suppose first that there are (at least) two edges precolored $1$ in 
		$H_1$ or $H_2$, say $H_1$.
		Let $e'_1$ and $e'_2$ be two such edges.
		Consider the subgraph $J_1$ of $Q_d$ induced by all dimensional
		matchings containing an edge precolored $1$. 
		Since there are at most two edges colored $2$ under $\varphi$,
		the maximum degree
		of $J_1$ is $d-1$ or $d-2$. Moreover,
		there is a  proper edge coloring of $J_2=Q_d- E(J_1)$ using $\Delta(J_2)$
		colors, and
		which agrees with the restriction of $\varphi$ to 
		$J_2$, because $J_2$ is a collection of disjoint
		$1$- or $2$-dimensional hypercubes, where every component contains
		at most one precolored edge. Thus, $\varphi$ is
		extendable if there is an extension with $\Delta(J_1)$ colors
		of the restriction $\varphi_1$ of $\varphi$
		to $J_1$ (using distinct colors from the extension of the restriction of
		$\varphi$ to $J_2$). Now, by the induction hypothesis, 
		there is an extension of
		$\varphi_1$ if for no component $T$ of $J_1$ the restriction of $\varphi_1$
		to $T$ satisfies the condition (C3) 
		(with $d-1$ or $d-2$ in place of $d$).
		If there is such a component $T$ of $J_1$, 
		then clearly
		all precolored edges of $J_1$ are in $T$ and there is a vertex
		$u$ of $T$ that is not incident with any precolored edge, but any vertex
		adjacent to $u$ in $T$ is incident with a precolored edge. 
		Thus we may assume
		that $e'_1$, $e'_2$ and $e_M$ are in the same component of $J_1$, 
		and one endpoint
		of all these three edges is adjacent to $u$. 
		Now, if $u$ is adjacent to $u_2$,
		then since $T$ is bipartite, 
		this implies that $e_M$ and $u_2u$ lie on $2$ common $4$-cycles, which is
		not possible since $T$ is isomorphic to a hypercube. On the other hand,
		if $u$ is adjacent to $u_1$, then since $T$ is bipartite, 
		by Proposition \ref{prop:HavelMoravek},
		this implies that $e'_1$ and $e'_2$ lie in the same dimensional matching;
		a contradiction in both cases, so $\varphi$ is extendable.
		
		It remains to consider the case when only one edge in $H_1$ and
		one edge in $H_2$
		is precolored $1$ under $\varphi$. Since at most two edges are precolored
		$2$ under $\varphi$, this implies that $d=5$ and, consequently,
		there are exactly two edges colored $2$ in $Q_d$.
		Suppose that $u_1v_1$ and $u_2v_2$ are the edges colored $2$
		under $\varphi$, where $u_iv_i \in E(H_i)$. Let $M_2$
		be the dimensional matching containing $u_1v_1$, and let
		$H'_1$ and $H'_2$ be the components of $Q_d - M_2$. Note that
		$u_1u_2$ and $u_2v_2$ lie in the same component of $Q_d - M_2$,
		 $H'_1$ say. Let $u'_2v'_2$ be the edge of $H'_2$ corresponding to
		$u_2v_2$; then $u'_2v'_2$ is not precolored under $\varphi$,
		because every dimensional matching contains a single precolored edge.
		Consider the precoloring $\varphi_1$
		of $Q_d$ obtained from the restriction
		of $\varphi$ to $H'_1$
		by recoloring $u_2v_2$ by color $3$, and the precoloring $\varphi_2$
		obtained from the restriction of $\varphi$ to $H'_2$ by also coloring
		 $u'_2v'_2$ by color $3$. 
		 Let us verify that neither of $\varphi_1$ and $\varphi_2$ satisfies
		 any of the conditions (C1)-(C3) (with $4$ in place of $d$).
		 Indeed, 
		 $H'_1$ contains at most four precolored edges colored
		 by exactly two distinct colors, and, moreover, two precolored edges
		 are adjacent; $H'_2$ contains at most three precolored edges.
		Thus, it follows from Theorem \ref{th:hypercube} and
		the induction hypothesis that there are proper edge colorings $f_1$
		of $H'_1$ and $f_2$ of $H'_2$ using colors $1,3,4,5$ that agree
		with $\varphi_1$ and $\varphi_2$, respectively. Now, by recoloring
		$u_2v_2$ and $u'_2 v'_2$ by color $2$ and coloring all edges of
		$M_2$ by the unique color missing at its endpoints, we obtain
		an extension of $\varphi$.

		\medskip

		By symmetry, it remains to consider the case when $H_1$ contains
		$d-2$ precolored edges, and $H_2$ contains 
		one precolored edge.

			\bigskip
		
		\noindent
		{\bf Case 1.3.} {\em $H_1$ contains 
		$d-2$ precolored edges and $H_2$ 
		contains one	precolored edge:}
		
		\medskip

		Suppose first that
		for every edge $e_1$ in $H_1$ that is adjacent to $e_M$, 
		either $e_1$ or the corresponding
		edge $e_2$ of $H_2$ is colored $2$ under $\varphi$, 
		or one of $e_1$ and $e_2$
		is adjacent to an edge colored $1$ distinct from $e_M$.
		If there are at least two edges precolored $1$ in $H_1$, then
		we proceed as in the preceding case and 
		consider the subgraphs $J_1$ and $J_2$ defined as above.
		So suppose instead that there is only one edge precolored $1$ in
		$H_1$; then $d=4$ and $H_1$ contains
	    one edge precolored $1$ and one edge precolored $2$.
		If $H_2$ contains an edge precolored $2$, then 
		since all precolored edges lie in distinct dimensional
		matchings and all edges precolored $2$ are adjacent to $e_M$, there
		is a perfect matching $M^*$ in $Q_d$ containing all edges 
		precolored $1$ and no edge precolored $2$.
		Since $H_1$ and $H_2$ both contains only
		one edge precolored $2$, this implies that
		$\varphi$ is extendable.
		If $H_2$ contains an edge precolored $1$, then
		one may prooced similarly; the details are omitted.

		Let us now consider the case when there is an 
		edge $e_1 \in E(H_1)$ adjacent to $e_M$ and satisfying that neither $e_1$
		nor its corresponding edge $e_2$ in $H_2$ is precolored or 
		adjacent to an edge colored $1$
		in $H_1$ and $H_2$, respectively.
		If the precoloring $\varphi_1$ obtained from the 
		restriction of $\varphi$ to $H_1$
		by in addition
		coloring $e_1$ by color $1$ is extendable 
		to a $(d-1)$-edge coloring of $H_1$, then
		there is a similar extension of $H_2$ of the restriction 
		of $\varphi$ to $H_2$ 
		along with coloring $e_2$ by $1$. By recoloring 
		$e_1$ and $e_2$ by color $d$,
		it is easy to see that there is an extension of $\varphi$.
		Thus we assume that $\varphi_1$ is not extendable.

		Suppose first that $e_1$ is the only edge
		colored $1$ in 
		$H_1$ under $\varphi_1$.
		If the $\varphi$-precolored edge of $H_2$ is colored $2$,
		then $H_1$ and $H_2$ only contain $\varphi$-precolored edges
		with color $2$, and by Theorem \ref{th:hypercube}, for $i=1,2$, the
		restriction of $\varphi$ to $H_i$ is extendable to a
		proper edge coloring of $H_i$ using colors $2,\dots,d$;
		thus $\varphi$ is extendable by coloring all edges of $M$ by color $1$.
		Hence, we may assume that $H_2$ contains a $\varphi$-precolored edge
		of color $1$.
			Note that
		this implies that the precolored edge $e'_2$ of $H_2$
		is not adjacent to $e_M$. 
		Moreover, the corresponding edge $e'_1$
		of $H_1$ is not $\varphi$-precolored, since all precolored
		edges lie in different dimensional matchings.
		Now, since the restriction of $\varphi$ to $H_1$
		consists of $d-2$ precolored edges with colors distinct
		from $1$, Theorem \ref{th:hypercube} yields
		that there is an extension of $H_1$ using colors $2,\dots, d$.
		We color $H_2$  correspondingly.
		Since $e_M$ and $e'_2$ are not adjacent, we now obtain
		an extension of $\varphi$ by recoloring $e'_1$
		and $e'_2$ by color $1$, and thereafter coloring
		all edges of $M$ by the color in $\{1,\dots,d\}$
		missing at its endpoints.

		Now assume that there are several
		edges $\varphi_1$-precolored $1$ in $H_1$.
		Since 
		$\varphi_1$ is not extendable, only two colors are 
		used in $\varphi_1$,
		and there are at least two edges 
		in $H_1$ precolored $1$ under $\varphi_1$,
		there is some vertex $v \in V(H_1)$ such that either
		
		\begin{itemize}
		
			\item[(a)] $v$ is not incident with any $\varphi_1$-precolored
			edge, but
			any edge incident to $v$ is adjacent to some edge 
			$\varphi_1$-precolored  $1$; or
			
			\item[(b)] $v$ is incident with an edge $\varphi_1$-precolored
			$2$ and all other edges incident with $v$ 
			are not $\varphi_1$-precolored 
			but adjacent
			to edges precolored $1$.
		
		\end{itemize}

		\bigskip
		
		\noindent
		{\bf Subcase A.} (a) holds: 
		
		\medskip
		
		If (a) holds, then every $\varphi$-precolored
		edge of $H_1$ is colored $1$ and thus the
		single $\varphi$-precolored edge in $H_2$ is
		colored $2$. Moreover, the restriction of
		$\varphi$ to $H_1$ is by Theorem \ref{th:hypercube}
		extendable to a proper $(d-1)$-edge coloring; in particular
		there is a perfect matching $M^*_1$ in $H_1$
		containing all edges precolored $1$. Let $e''_1$ be
		the edge of $M^*_1$ that is incident with $u_1$, and let
		$e''_2$ be the corresponding edge of $H_2$. 
		Then there is a perfect matching $M^*_2$ in $H_2$
		which does not contain the $\varphi$-precolored
		edge of $H_2$ if it is distinct from $e''_2$.
		We now define a perfect matching $M^*$ of $Q_d$ by
		removing $e''_1$ and $e''_2$ from $M^*_1 \cup M^*_2$
		and adding two edges from $M$ with the same endpoints
		as $e''_1$ and $e''_2$. Since $M^*$ is a perfect matching
		containing all edges colored $1$ under $\varphi$ and no
		edges with color $2$ under $\varphi$, and there is only
		one edge $\varphi$-precolored $2$ in $Q_d$, $\varphi$
		is extendable.
		
		\bigskip

		\noindent
		{\bf Subcase B.} (b) holds: 
		
		\medskip
		
		Suppose now that (b) holds.
		Then $u_1 \neq v$, because $u_1$ is incident with an
		edge colored $1$ under $\varphi_1$.
		Suppose first that $u_1$ is not adjacent to $v$.
		Then $u_1$ and $v$  have a common neighbor $x$,
		 because $H_1$ is $(d-1)$-regular and contains exactly
		$d-1$ $\varphi_1$-precolored edges.
		Moreover, since $u_1$ and $v$ 
		are at distance $2$ and $H_1$ is a $(d-1)$-dimensional hypercube,	$u_1$ and $v$
		have precisely two common neighbors. 
		Now, since $H_1$ is a $(d-1)$-regular bipartite graph
		and (b) holds, this means that
		there are $d-3$ edges of $H_1$ incident with $u_1$ that
		are neither $\varphi_1$-precolored nor adjacent
		to a $\varphi$-precolored edge of $H_1$.
		Thus if $d \geq 5$, then
		there is an edge $e'$
		incident with $u_1$ that is not precolored under $\varphi_1$, 
		and not adjacent
		to an edge of $H_1$ precolored $1$ under $\varphi$, and, moreover, 
		the analogous statement
	    holds for the corresponding edge of $H_2$.
		Now, since 
		\begin{itemize}
		
		\item $e_1$ and $e'$ are adjacent,
		
		\item there is exactly one edge
		$\varphi$-precolored $2$ in $H_1$,
		and
		
		\item $H_1$ contains at least two $\varphi$-precolored
		edges of color $1$ which lie in different dimensional
		matchings, 
		\end{itemize}
		it follows that
		the precoloring obtained
		from the restriction of $\varphi$ to $H_1$ by 
		in addition coloring $e'$ by color $1$
		is extendable to a $(d-1)$-edge coloring of $H_1$,
		and, as above,
		we obtain an extension of $\varphi$
		by constructing a coloring of $H_2$ as in the preceding subcase.
		    Suppose now that $d=4$. Then, since (b) holds,
		and all $\varphi$-precolored edges are in different
		dimensional matchings, there is a perfect matching $M^*$
		in $Q_d$ containing all edges $\varphi$-precolored $1$, but no
		edges precolored $2$ under $\varphi$; thus $\varphi$ is extendable,
		because $H_1$ and $H_2$ both contains at most one edge precolored $2$
		under $\varphi$.

		Now assume that $u_1$ is adjacent to $v$. 
				Note that $u_1v$ is not colored $2$, because
		$e_1$ is incident with $u_1$ and colored $1$ under
		$\varphi_1$, and
		$H_1$ is bipartite and $(d-1)$-regular,
		and contains exactly $d-1$ precolored edges 
		under $\varphi_1$.
		Let $\varphi'_1$ be the precoloring of $H_1$
		obtained from the restriction of $\varphi$ to $H_1$
		by coloring $u_1v$ by color $1$.
		Let $v_2$ be the vertex of $H_2$ corresponding
		to $v$. Note that no edge of $H_2$ incident
		with $u_2$ or $v_2$ is precolored $1$,
		because in the former case this contradicts $u_1u_2$
		being $\varphi$-precolored $1$, and in the latter case
		$\varphi \in \mathcal{C}_2$.
		Let $\varphi'_2$ be the precoloring of $H_2$  obtained
		from the restriction of $\varphi$ to $H_2$
		by in addition coloring (possibly recoloring)
		$u_2v_2$ by color $1$. Then $\varphi'_1$ and $\varphi'_2$
		are extendable to proper $(d-1)$-edge colorings; in particular
		for $i=1,2$,
		there is a perfect matching $M^*_i$
		in $H_i$ containing all $\varphi'_i$-precolored edges
		with color $1$.
		By removing
		$u_1v$ and $u_2v_2$ from $M^*_1 \cup M^*_2$ and
		adding two edges from $M$ instead
		we get a perfect matching $M^*$ of $Q_d$
		that contains all $\varphi$-precolored edges with color
		$1$, but no such edges with color $2$. Now, since $H_1$ and $H_2$
		each contains only one edge $\varphi$-precolored $2$,
		there is an extension of $\varphi$.

		\bigskip
		
		\noindent
		{\bf Case 2.} {\em There is a
		dimensional matching containing no precolored edge:}
		
		\medskip
		
			Without loss of generality, we assume that no edge of $M$
		is precolored.

		\bigskip

		\noindent
		{\bf Case 2.1.} {\em No precolored edges are in $H_2$:}
		
		\medskip
		
		Without loss of generality we assume that there
		are more colors precolored $1$ than $2$. Then
		by Theorem \ref{th:hypercube}, the precoloring
		of $H_1$ obtained from the restriction of $\varphi$ to $H_1$ 
		by removing color $1$ 
		from all edges $e$ 
		with $\varphi(e) =1$, is extendable to a proper edge coloring $f$
		of $H_1$ using colors $2,\dots,d$.
		By recoloring all the edges $e$ with $\varphi(e) =1$
		by color $1$
		we obtain, from $f$, a $d$-edge coloring $f'$ of $H_1$. 
		Moreover, by coloring every edge
		of $H_2$ by the color of its corresponding edge in $H_1$ under $f'$,
		and then
		coloring every edge of $M$ with the color in $\{1,\dots,d\}$
		missing at its
		endpoints, we obtain an extension of $\varphi$.
		
	\bigskip
		
		\noindent
		{\bf Case 2.2.} {\em Both $H_1$ and $H_2$ contain at most $d-2$
		precolored edges:}
		
		\medskip

		By Theorem \ref{th:hypercube}, for $i=1,2$, 
		there is a $(d-1)$-edge-coloring $f_i$ of $H_i$
		that is an extension of the restriction of
		$\varphi$ to $H_i$.
		By taking $f_1$ and $f_2$ together and coloring
		every edge of $M$ by color
		$d$, we obtain an extension of $\varphi$.

				\bigskip
		
		\noindent
		{\bf Case 2.3.} {\em $H_1$ contains $d-1$ precolored edges and $H_2$ 
		contains one	precolored edge:}
		
		\medskip

		Let $e_2$ be the precolored edge of $H_2$, and let $e_1$
		be the edge of $H_1$ corresponding to $e_2$.
		If the restriction of $\varphi$ to $H_1$ is extendable to
		a $(d-1)$-edge coloring of $H_1$, 
		then it follows, as above,
		that $\varphi$ is extendable. 
		So suppose that the restriction of $\varphi$ 
		to $H_1$
		is not extendable. Then, 
		 since only two colors appear in the
		precoloring $\varphi$ and $d \geq 4$, we may 
		without loss of generality assume that
		either
		
		\begin{itemize}
		
			\item[(a)] there is a vertex $u$ incident with an edge 
			$e'$ precolored $2$, and every edge in $H_1$ incident with $u$
			and distinct from $e'$ is not precolored but adjacent
			to an edge precolored $1$, or
			
			\item[(b)] there is a vertex $u$ of $H_1$ such that
			no edge incident with $u$ is precolored, but every vertex
			adjacent to $u$ in $H_1$ is incident with an edge precolored $1$.
		
		\end{itemize}

		\bigskip
		
		\noindent
		{\bf Subcase A.} (a) holds: 
		
		\medskip
		
		Suppose that (a) holds, and
		let $e'$ be the edge in $H_1$
		that is precolored $2$. We shall consider two different subcases.
		
		\bigskip
		
		\noindent
		{\bf Subcase A.1.} $\varphi(e_2)=1$: 
		
		\medskip

		If $e_1$ is incident with $u$, then the conditions imply that
		$\varphi \in \mathcal{C}_2$,
		so we assume that
		$e_1$ is not incident with $u$. If $e'$ is not adjacent to $e_1$, then
		we define $\varphi_1$ to be the precoloring obtained from the restriction
		of $\varphi$ to $H_1$ by removing color $2$ from $e'$. 
		By Theorem \ref{th:hypercube}, $\varphi_1$ is 
		extendable to a proper edge coloring $f_1$ of $H_1$
		using colors $1,3,\dots,d$. Let $\varphi_2$ be the precoloring of $H_2$
		obtained from the restriction of $\varphi$ to $H_2$ by additionally 
		coloring the edge
		of $H_2$ corresponding to $e'$ by color $f_1(e')$; 
		by Theorem \ref{th:hypercube},
		this precoloring is extendable
		to a proper edge coloring using colors $1,3,\dots, d$. Now, by recoloring
		$e'$ and the corresponding edge of $H_2$ by color $2$ and thereafter
		coloring every edge of $M$ by the color missing at its endpoints, 
		we obtain an extension of $\varphi$.
		
		Let us now consider the case when $e_1$ is adjacent to $e'$, but not incident
		to $u$. Then $e_1$ is not precolored under $\varphi$.
		If $e_1$ is not adjacent to any edge precolored $1$ in $H_1$, then 
		we proceed as follows:
		let $\varphi_1$ be the precoloring of $H_1$ obtained
		from the restriction of $\varphi$ to $H_1$ by
		removing color $1$ from all edges $\varphi$-precolored
		$1$.
		Then $\varphi_1$ is extendable to a proper
		edge coloring using colors $2,\dots, d$. By coloring
		$H_2$ correspondingly, and thereafter recoloring
		all edges $\varphi$-precolored $1$ in $H_1$
		with color $1$, recoloring $e_1$ by color $1$,
		and recolor $H_2$ correspondingly, we 
		obtain an extension of $\varphi$ by coloring
		every edge of $M$ by the unique color in $\{1,\dots,d\}$
		missing at its endpoints.

		Finally, assume that $e_1$ is adjacent to $e'$, not incident
		to $u$, but adjacent to some edge precolored $1$ in $H_1$. 
		From $\varphi$ we define a new precoloring $\varphi'$
		of $Q_d$ with $d$ precolored edges
		by removing the color $2$ from $e'$ and coloring
		the edge of $M$ incident with $u$ by color $1$.
		Now, unless $\varphi' \in \mathcal{C}_3$, then by
		Lemma \ref{lem:mono}, $\varphi'$ is extendable; in particular
		there is a perfect matching $M^*$ containing all
		edges $\varphi$-precolored $1$ but not the edge 
		$\varphi$-precolored $2$. Since $Q_d$ contains only one
		edge $\varphi$-precolored $2$, this implies that
		$\varphi$ is extendable; hence, it suffices to prove
		that $\varphi' \notin \mathcal{C}_3$.
		Now, if $\varphi' \in \mathcal{C}_3$, then
		there is a vertex $v$ that is not incident
		with any edge $\varphi'$-precolored $1$,
		but all neighbors of $v$ are incident with
		$\varphi'$-precolored edges of color $1$. 
		Since $H_1$ contains $d-2 \geq 2$ edges with color
		$1$ under $\varphi'$, $v \in V(H_1)$. Moreover,
		since $\varphi'(e_2) = 1$ and the end $x$ 
		of $e_1$ that is not an end of $e'$ is incident with
		an edge that is $\varphi'$-precolored, it follows
		that $v$ must be the common end of $e_1$ and $e'$.
		However, since $e'$ is colored $2$ under $\varphi$,
		this implies that $\varphi \in \mathcal{C}_2$,
		a contradiction.

			\bigskip
		
		\noindent
		{\bf Subcase A.2.} $\varphi(e_2)=2$: 
		
		\medskip

		If $e' = e_1$, then we consider
		the precoloring $\varphi_1$ of $H_1$ obtained from 
		$\varphi$ by removing the color
		from $e'$. This coloring is, by Theorem \ref{th:hypercube}, extendable
		to a proper edge coloring $f_1$ of $H_1$ using colors $1,3,\dots,d$.
		Let $f_2$ be the corresponding coloring of $H_1$. An extension
		of $\varphi$ can now be obtained by recoloring $e_1$ and $e_2$ by color $2$,
		and then coloring every edge of $M$ by the color not appearing
		at its endpoints.
		
		If  $e_1$ is not adjacent to $e'$ and 
		not precolored $1$, then we proceed as in the preceding paragraph,
		except that we color both 
		$e'$ and $e_1$, and their corresponding edges in $H_2$, by color $2$
		in the final step.
		
		Suppose now that $e_1$ is adjacent to $e'$.
		Then $e_1$ is not precolored
		under $\varphi$, because $H_1$ contains
		exactly $d-1$ precolored edges and (a) holds.
		Moreover, since $d \geq 4$,
		there is an edge $e_3 \in E(H_1)$ precolored $1$
		that is not adjacent to $e_1$. Define a precoloring $\varphi_1$
		of $H_1$ from $\varphi$ by removing color $1$ from $e_3$ 
		and recoloring all other edges of
		$H_1$ precolored $1$ under $\varphi$ by color $3$. 
		By Theorem \ref{th:hypercube},
		the precoloring $\varphi_1$ is extendable to a proper edge coloring $f_1$
		of $H_1$ using colors $2,3,\dots,d$. Now, define a precoloring
		$\varphi_2$ of $H_2$ from the restriction of 
		$\varphi$ to $H_2$ by for every
		edge $e$ in $H_1$ precolored $1$ under $\varphi$, coloring the
		corresponding edge of $H_2$ by $f_1(e)$. 
		The precoloring $\varphi_2$ does not satisfy 
		any of the conditions (C1)-(C4) (with $d-1$ in place of $d$),
		because it is not monochromatic, and all precolored
		edges have one end which is at distance $1$ from
		the vertex of $H_2$ corresponding to $u$.
		Hence,
		by the induction hypothesis,
		the coloring $\varphi_2$ is extendable to 
		a proper edge coloring $f_2$ of $H_2$
		using colors $2,3,\dots, d$. From 
		$f_1$ and $f_2$ we define an extension
		of $\varphi$ by recoloring any edge of $H_1$ that is $\varphi$-precolored $1$
		by color $1$, recoloring every edge of $H_2$ corresponding to such an edge
		by color $1$, and thereafter coloring every edge of $M$ by the unique
		color not appearing at its endpoints.
		
		\medskip
		
		Finally, let us consider the case when $e_1$ is 
		precolored $1$ under $\varphi$. 
		Let $\varphi_1$ be the restriction
		of $\varphi$ to $H_1$.
		By Lemma \ref{cl:PartCol}, there is a
		partial proper edge coloring $f_1$
		of $H_1$ satisfying the conditions
		(i)-(iii) of Lemma \ref{cl:PartCol}. Let $E'$ be the set of edges colored
		under $f_1$. The graph $H_1 -E'$ has maximum degree $d-2$ so the coloring
		$f_1$ can be extended to a proper $d$-edge coloring $f'_1$ of $H_1$
		by using K\"onig's edge coloring theorem. 
		Let $f'_2$ be the corresponding coloring of $H_2$, 
		except that we interchange
		colors on the $(1,2)$-colored cycle containing $e_2$.
		Note that for every vertex $x$
		of $H_1$, the same colors appear at $x$ under $f'_1$ 
		and at the corresponding
		vertex of $H_2$ under $f'_2$. Moreover, $f'_1$ 
		and $f'_2$ agrees with $\varphi$.
		Hence, $\varphi$ is extendable.

		\bigskip

		\noindent
		{\bf Subcase B.} (b) holds: 
		
		\medskip
		
		Recall that if (b) holds, then
		there is a vertex $u$ of $H_1$ such that
		no edge incident with $u$ is $\varphi$-precolored, but every vertex
		adjacent to $u$ in $H_1$ is incident with an edge precolored $1$ 
		under $\varphi$.
		Recall that $e_2$ is the unique edge of $H_2$ that is precolored,
		and $e_1$ is the corresponding edge of $H_1$.
		Since two colors appear in $\varphi$, $\varphi(e_2)=2$.
		If $e_1$ is not precolored, then let $f_2$ be an extension
		of the restriction of $\varphi$ to $H_2$ using colors $2,\dots,d$;
		such an extension exists by Theorem \ref{th:hypercube}.
		Let $f_1$ be the corresponding edge coloring of $H_1$.
		From $f_1$ and $f_2$ we obtain an extension of $\varphi$
		by recoloring all edges precolored $1$ under $\varphi$ by color $1$,
		recoloring all corresponding edges of $H_2$ by color $1$, and thereafter
		coloring every edge of $M$ by the unique color in $\{1,\dots,d\}$
		not appearing at its endpoints.
		
		Suppose now that $e_1$ is precolored under $\varphi$; 
		then $\varphi(e_1)=1$. 
        Since $H_1$ contains at least three $\varphi$-precolored
        edges, there are at most two vertices $v_1$ and $v_2$
        of $H_1$
        which are 
        at distance $1$ from $d-1$ vertices all of which
        are incident with edges precolored $1$
        (because otherwise two vertices of distance $2$ lie
        in at least two distinct $4$-cycles, which is not
        possible since $H_1$ is a $(d-1)$-dimensional hypercube).
        Now, since $d-1\geq 3$, there is an 
        edge $e'$ in $H_2$ that is adjacent to $e_2$, 
        and satisfies that the corresponding edge
        of $H_1$ is not incident with $v_1$ or $v_2$.
        This implies that the precoloring 
        $\varphi'$
        obtained from $\varphi$ by coloring $e'$ by color $1$
        and removing color $2$ from $e_2$ is not in 
        $\mathcal{C}_3$; so by Lemma \ref{lem:mono},
        $\varphi'$ is extendable to a proper $d$-edge coloring
        $f$. Now, $f(e') = 1$; so $f(e_2) \neq 1$, and
        since $e_2$ is the only edge colored $2$ under $\varphi$,
        we obtain an extension of $\varphi$ by permuting colors
        in $f$.
	\end{proof}


	\begin{lemma}
	\label{lem:severalcolors}
		If at least three and at most $d-1$ colors appear on edges under $\varphi$,
		and $\varphi \notin \mathcal{C}$,
		then $\varphi$ is extendable.
	\end{lemma}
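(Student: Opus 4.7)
The proof will be by induction on $d$, following the case-analysis scheme of Lemmas \ref{lem:mono} and \ref{lem:twocolors}. As there, fix a dimensional matching $M$ of $Q_d$ with $H_1, H_2 \cong Q_{d-1}$ the components of $Q_d - M$, and split on whether some dimensional matching contains no precolored edge (Case 2, in which case we take $M$ itself to be such) or every dimensional matching contains one (Case 1, in which case we choose $M$ so that some edge $e_M=u_1u_2 \in M$ is precolored). Each case is then subdivided according to the distribution of the precolored edges between $H_1$ and $H_2$: all precolored edges in $H_1$ (subcase .1), each $H_i$ carrying few (subcase .2), or exactly one precolored edge in $H_2$ and the rest in $H_1$ (subcase .3).

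The new feature of this lemma is that $\varphi$ uses $t$ colors with $3 \leq t \leq d-1$, so at least one color in $\{1,\dots,d\}$ is entirely absent from $\varphi$ and can serve as a ``free'' color on $M$, and by pigeonhole at least one color class of $\varphi$ has size at most $\lfloor d/3 \rfloor$. Armed with these two sources of slack, subcases .1 and .2 go through by the standard induction step. In subcase .2 both $H_1, H_2$ carry at most $d-2$ precolored edges, so Theorem \ref{th:hypercube} extends each restriction to a proper $(d-1)$-edge coloring; the edges of $M$ are then colored by the unique color missing at each pair of endpoints. In subcase .1 we pick a color $c$ appearing on as few precolored edges as possible, remove $c$ from the precolored edges it colors, apply the induction hypothesis to the resulting precoloring of $H_1$ (which now has at most $d-1$ precolored edges, subject to a brief check that it is not in $\mathcal{C}$), restore color $c$ on the original edges (they form a matching so properness is preserved), mirror the resulting coloring to $H_2$, and color $M$ as before.

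The main obstacle is subcases 1.3 and 2.3, where $H_1$ carries exactly $d-1$ precolored edges and $H_2$ carries one precolored edge $e_2$. Writing $e_1$ for the edge of $H_1$ corresponding to $e_2$, the restriction of $\varphi$ to $H_1$ is on the boundary of the induction hypothesis, and after any attempted reduction (dropping a color, or coloring $e_1$ with $\varphi(e_2)$) it may land in the set $\mathcal{C}$ for $Q_{d-1}$. When it does not, the induction hypothesis gives an edge coloring $f_1$ of $H_1$; we mirror to $H_2$, apply a Kempe swap along the appropriate bichromatic cycle through $e_2$, and color $M$ with the missing colors at each pair of endpoints. When it does land in $\mathcal{C}$, we follow the recipe from Case 2.3 of Lemma \ref{lem:twocolors}: build a partial proper two-color edge coloring of $H_1$ consistent with $\varphi$, covering every vertex, and containing a bichromatic cycle through $e_1$, using Claim \ref{cl:cycle} and an analogue of Claim \ref{cl:PartCol} tailored to the three-or-more-color setting; extend this partial coloring to a proper $d$-edge coloring of $H_1$ via König's edge coloring theorem; mirror to $H_2$ with a Kempe swap along the cycle; and color $M$ by the color missing at each pair of endpoints.

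The hardest sub-step lies in Case 1.3, where every dimensional matching already carries a precolored edge so there is no a priori ``free'' color for $M$. Here we plan to use the presence of three or more colors to locate an uncolored edge $e'$ adjacent to $e_M$ whose recoloring with a well-chosen color simultaneously unblocks the induction on both $H_1$ and $H_2$. Verifying that this choice can always be made outside the excluded configurations $\mathcal{C}$ --- across the various positionings of the three or more colors on the $d$ precolored edges --- will require a careful case analysis along the lines of Case 1.3 of Lemma \ref{lem:twocolors}, with additional subcases reflecting the extra color freedom but also, reassuringly, with additional elbow room coming from the unused color in $\{1,\dots,d\}$.
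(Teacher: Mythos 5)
Your high-level architecture matches the paper's: induction on $d$, a dimensional matching $M$ with components $H_1,H_2\cong Q_{d-1}$, a split on whether every dimensional matching carries a precolored edge, and a further split by how the precolored edges distribute over $H_1$ and $H_2$, with the subcases where $H_1$ carries $d-1$ or $d-2$ precolored edges being the hard ones. However, there are concrete gaps. The most serious is your subcase 1.2 (every dimensional matching precolored, both $H_i$ carrying few precolored edges): extending the restrictions of $\varphi$ to $H_1$ and $H_2$ independently via Theorem \ref{th:hypercube} and then coloring $M$ ``by the unique color missing at each pair of endpoints'' does not work. Two independent $(d-1)$-edge colorings need not leave the same color missing at the two ends of an edge of $M$; and even if they did (say both avoid color $d$), every edge of $M$ would then receive color $d$, contradicting $\varphi(e_M)=1$. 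The paper's mechanism here is different and essential: it first tries to plant an extra edge of color $1$ adjacent to $e_M$ in both halves, and when that fails it decomposes $Q_d$ into the subgraph $J_1$ induced by the dimensional matchings carrying edges precolored $1$ or $2$ and its complement $J_2$, extends the two restrictions with disjoint color sets (applying the induction hypothesis to the components after checking none lands in $\mathcal{C}$), and only then reassembles. Nothing in your sketch supplies this idea.

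A second gap: in subcase .1 your recipe of deleting a color class of minimum size is not the right move. What is needed in Case 1.1 is to delete precisely the color $\varphi(e_M)$ from $H_1$ (so that after mirroring, that color is missing at both ends of $e_M$ and can be restored on $M$), together with an obstruction analysis when the reduced precoloring of $H_1$ is in $\mathcal{C}$ or when $\varphi(e_M)$ does not occur in $H_1$ at all; deleting some other small color class gives no control over which color is missing at $u_1$, so $e_M$ may not be colorable with its prescribed color. Finally, you correctly identify subcases 1.3 and 2.3 as the crux, but you defer essentially all of their content (``will require a careful case analysis''). In the paper these subcases occupy most of the proof --- locating the blocking vertex $v$, exploiting a color that occurs on exactly one edge, removing a dimensional matching $M'$ through a singleton color class, and the Kempe/K\"onig machinery of Claims \ref{cl:cycle} and \ref{cl:PartCol} --- so as written the proposal is a plausible plan rather than a proof.
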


	\begin{proof}
		Without loss of generality we shall assume that color $1,2$ and
		$3$ appear on edges under $\varphi$, and that color
		$d$ does not appear under $\varphi$.

		\bigskip
		
		\noindent
		{\bf Case 1.} {\em Every
		dimensional matching contains a precolored edge:}
		
		\medskip
		
		Without loss of generality, we assume that $M$ contains an edge
		$e_M$ precolored $1$ under $\varphi$, and first consider
		the case when all other precolored edges are in $H_1$.
		
		\bigskip
		
		\noindent
		{\bf Case 1.1.} {\em No precolored edges are in $H_2$:}
		
		\medskip
		
		Suppose first that color $1$ does not appear in $H_1$.
		If the restriction of $\varphi$ to $H_1$
		is extendable to a $(d-1)$-edge coloring of $H_1$, then
		we may choose such an extension with 
		colors $2,\dots,d$, and thus
		$\varphi$ is extendable. If, on the other hand, 
		the restriction of
		$\varphi$ to $H_1$ is not extendable, 
		then, since at most $d-2$ different colors
		appear in $H_1$, $\varphi$ satisfies (C2) or (C3)
		(with $d-1$ in place of $d$). Hence, there is a vertex
		$u$ such that all edges in $H_1$ incident with $u$  are either
		precolored, or non-precolored and
		adjacent to an edge of a fixed color, say $2$.
		Note that this implies that at least two
		edges in $H_2$ are precolored $2$.
		If $u$ is an endpoint of $e_M$, then 
		$\varphi \in \mathcal{C}$;
		otherwise, assuming $d > 4$, there is either some 
		edge $e'$ adjacent to $e_M$
		that is not colored under $\varphi$ 
		and not adjacent to any edge precolored $2$ under $\varphi$,
		or an edge $e'$ adjacent to $e_M$ and colored $2$. 
		By removing the colors from all edges precolored $2$ 
		under $\varphi$ and coloring $e'$ by
		color $1$, we obtain, from the restriction of $\varphi$ to $H_1$, 
		a precoloring 
		that is extendable to a $(d-1)$-edge coloring of $H_1$,
		because at least two edges in $H_1$ are colored $2$ under $\varphi$.
		Let $f_1$ be an extension
		of this precoloring using colors $1,3,\dots,d$.
		Now, by recoloring $e'$ by color $2$, 
		and also recoloring all (other) edges precolored
		$2$ under $\varphi$ with color $2$, we obtain a proper 
		$d$-edge  coloring of $H_1$.
		By coloring $H_2$ correspondingly and then coloring 
		every edge of $M$ with the color
		missing at its endpoints, we obtain an extension of $\varphi$.
		
		It remains to consider the case when $d=4$. However, it is
		easy to see that if $d=4$ (and thus $H_1$ is isomorphic to $Q_3$)
		there cannot be a vertex $u$ as described
		above and such that all precolored edges lie in different dimensional
		matchings.

		Suppose now that color $1$ appears in $H_1$ under $\varphi$. 
		By removing the color
		from all edges precolored 
		$1$ under $\varphi$ from the restriction of $\varphi$ to $H_1$, we obtain
		an edge precoloring of $H_1$ 
		that is extendable to a proper $(d-1)$-edge coloring
		of $H_1$. Let $f_1$ be an extension of this
		precoloring using colors $2,\dots, d$. By recoloring all edges
		precolored $1$ under $\varphi$
		by color $1$, we obtain an extension of $\varphi$ as above.
		
		\bigskip

		\noindent
		{\bf Case 1.2.} {\em Both $H_1$ and $H_2$ contain at most $d-3$
		precolored edges:}
		
		\medskip
			
		If there is an edge $e_1$ 
		in $H_1$ adjacent to $e_M$, and 
		such that neither $e_1$ nor the corresponding
		edge $e_2$ of $H_2$ is colored under $\varphi$, and neither of $e_1$ and
		$e_2$ is adjacent to an edge precolored $1$ under $\varphi$,
		then we 
		consider the precolorings
		of $H_1$ and $H_2$ obtained from the restriction of $\varphi$ to $H_i$
		along with coloring $e_1$ and $e_2$ by color $1$.
		By the induction hypothesis, 
		these colorings are extendable to $(d-1)$-edge colorings 
		$f_1$ and $f_2$, respectively.
		Now, by recoloring $e_1$ and $e_2$ by color $d$ and then coloring every
		edge of $M$ by the color missing at its endpoints we obtain the 
		required extension of $\varphi$.
		
		Now suppose that there are no edges $e_1$ and $e_2$
		as described in the preceding
		paragraph. Then any edge precolored by a color
		distinct from $1$ under $\varphi$
		is adjacent to $e_M$, and any edge colored $1$ under $\varphi$ is
		adjacent to an edge $e'$ that is adjacent to $e_M$.

		Let	$J_1$ be the subgraph of $Q_d$ induced by all dimensional
		matchings containing edges precolored $1$ or $2$, and let 
		$J_2 = Q_d - E(J_1)$. Suppose that $J_1$ has maximum degree
		$q$. 
		Note that no component $T$ of $J_1$ has the property
		that the restriction of $\varphi$ to $T$ satisfies
		condition (C2) (with $q$ in place of $d$), because
		an edge precolored $2$ is adjacent to an edge precolored
		$1$. Moreover, no component $T$ of $J_2$,
		with the restriction of $\varphi$ to $T$,
		satisfies any of the conditions (C1)-(C4) 
		(with $d-q$ in place of $d$),
		because if all precolored edges of $J_2$ are in $T$,
		then they are all incident to the same endpoint
		of $e_M$.
		Thus, by the induction hypothesis, 
		the restriction $\varphi_1$ of
		$\varphi$ to $J_1$ is extendable to a proper $q$-edge coloring,
		and the restriction $\varphi_2$ of $\varphi$ to $J_2$ is extendable to
		a proper edge coloring with $d-q$ colors. 
		Moreover, since $\varphi_1$ and $\varphi_2$
		use distinct sets of colors, we may use distinct colors
		for the extensions of $J_1$ and $J_2$, respectively; 
		thus we conclude that $\varphi$ is extendable.

				\bigskip
		
		\noindent
		{\bf Case 1.3.} {\em $H_1$ contains $d-2$ precolored edges and $H_2$ 
		contains one	precolored edge:}
		
		\medskip

		If for every edge $e_1$ in $H_1$ adjacent to $e_M$, 
		either $e_1$ or the corresponding
		edge $e_2$ of $H_2$ is precolored under $\varphi$,
		or one of $e_1$ and $e_2$
		is adjacent to an edge colored $1$ distinct from $e_M$,
		then we proceed exactly as in the preceding case and construct
		an extension of $\varphi$ by defining subgraphs $J_1$ and $J_2$ as 
		above.

		Thus we may assume that there is an
		edge $e_1 \in E(H_1)$ such that neither $e_1$
		nor its corresponding edge $e_2$ in $H_2$ is precolored or
		adjacent to an edge colored $1$
		in $H_1$ and $H_2$, respectively.
		If the precoloring $\varphi_1$ obtained from the
		restriction of $\varphi$ to $H_1$
		by in addition
		coloring $e_1$ by color $1$ is extendable 
		to a $(d-1)$-edge coloring of $H_1$, then
		we can obtain an extension of $\varphi$ as follows:
		By Theorem \ref{th:hypercube},
		there is a similar extension of $H_2$ of the restriction 
		of $\varphi$ to $H_2$
		along with coloring $e_2$ by $1$. By recoloring
		$e_1$ and $e_2$ by color $d$,
		it is easy to see that there is an extension of $\varphi$.
		Thus we assume that $\varphi_1$ is not extendable.
		
		Let $e_M = u_1u_2$, and
		suppose first that there is only one edge precolored $1$
		under $\varphi_1$. If the $\varphi$-precolored edge of
		$H_2$ is not colored $1$, then by the induction hypothesis,
		the restriction of $\varphi$ to $H_i$ is extendable ($i=1,2$),
		to proper edge colorings using colors $2,\dots,d$; thus,
		$\varphi$ is extendable. Hence, we may assume that
		color $1$ appears in $H_2$ under $\varphi$.
		Note that
		this implies that the precolored edge $e'_2$ of $H_2$
		is not adjacent to $e_M$. 
		Moreover, the corresponding edge $e'_1$
		of $H_1$ is not $\varphi$-precolored, since all precolored
		edges lie in different dimensional matchings.
		Now, since the restriction of $\varphi$ to $H_1$
		consists of $d-2$ precolored edges with colors distinct
		from $1$, Theorem \ref{th:hypercube} yields
		that there is an extension of $H_1$ using colors $2,\dots, d$.
		We color $H_2$  correspondingly.
		Since $e_M$ and $e'_2$ are not adjacent, we now obtain
		an extension of $\varphi$ by recoloring $e'_1$
		and $e'_2$ by color $1$, and thereafter coloring
		all edges of $M$ by the color in $\{1,\dots,d\}$
		missing at its endpoints.

		Suppose now that color $1$ appears on several 
		edges in $H_1$ under $\varphi_1$.
		Note that since at least three 
		colors, and at most $d-1$ colors, are used by $\varphi$, 
		this implies that $d \geq 5$.
		Since 
		$\varphi_1$ is not extendable and thus satisfies
	    one of the conditions (C1)-(C4),
		and color $1$ appears on several edges under $\varphi_1$,
		there is some vertex $v \in V(H_1)$ such that
		every edge incident with $v$ is $\varphi_1$-precolored
		or adjacent to an edge precolored with $1$.
		
		Since $u_1$ is incident with an edge precolored $1$ under
		$\varphi_1$, $u_1 \neq v$.
		If $u_1$ is not adjacent to $v$, then
		since $d \geq 5$, and any two adjacent edges 
		in $H_1$ are contained in exactly one $4$-cycle,
		there is some edge $e_1'$ in $H_1$ adjacent to $e_M$ 
		such that neither $e'_1$
		nor its corresponding edge $e'_2$ in $H_2$ is precolored 
		or adjacent to
		any edge precolored $1$ under $\varphi$. 
		Let us prove that
		the precoloring $\varphi''_1$ obtained
		from the restriction of $\varphi$ to $H_1$ by 
		in addition coloring $e'_1$ by color $1$
		is extendable to a $(d-1)$-edge coloring of $H_1$.
		Indeed, if $H_1$ contains only one edge that is
		$\varphi$-precolored $1$, then $v$ is incident
		with edges of at least two distinct colors, so
		$\varphi''_1$ does not satisfy (C2) (with $d$ in place of
		$d-1$); if $H_1$ contains at least two edges that are
		$\varphi$-precolored $1$, then
		since $u_1$ and $v$ have exactly two common neighbors
		and any two $\varphi$-precolored edges lie in
		distinct dimensional matchings,
		$\varphi''_1$ does not satisfy (C2).
		Furthermore, the precoloring of $H_2$ obtained from
		the restriction of $\varphi$ to $H_2$ by also
		coloring $e'_2$ by color $1$ is extendable.
		By recoloring
		$e'_1$ and $e'_2$ by color $d$ we obtain an 
		extension of $\varphi$ as before.
		
		If, on the other hand, $u_1$ is adjacent to $v$, then we
		may color $u_1v$ and proceed as above unless the edge 
		$e'_2$
		of $H_2$ corresponding to $u_1v$ is precolored or adjacent
		to an edge precolored $1$. 
		If the latter holds, then $\varphi \in \mathcal{C}$.
		On the other hand, if $e'_2$ is $\varphi$-precolored,
		then let $M'$ be a dimensional matching in $Q_d$ containing a 
		$\varphi$-precolored edge incident with $v$ and colored
		by a color $c$ that only occurs once under $\varphi$;
		such an edge exist since at least three colors
		are used in $\varphi$. 
		Then both components of $Q_d - M'$ satisfies
		that the restriction of $\varphi$ to
		this component is not in $\mathcal{C}$; 
		thus, by the induction hypothesis,
		the restriction of $\varphi$ to $Q_d -M'$
		is extendable to a proper edge coloring of $Q_d -M'$ using colors
		in $\{1,\dots,d\} \setminus \{c\}$. 
		We conclude that $\varphi$ is extendable.

		\bigskip
		
		\noindent
		{\bf Case 2.} {\em There is a
		dimensional matching containing no precolored edge:}
		
		\medskip
		
		Without loss of generality we assume that no edge of $M$
		is precolored.
		
		\medskip

		The case when all precolored edges are in $H_1$, and the case when
		$H_1$ and $H_2$ both contain at most $d-2$ precolored edges
		can be dealt with exactly as in Case 2 of the proof of 
		Lemma \ref{lem:twocolors}.
		Hence, we assume that $H_1$ contains exactly $d-1$ precolored edges.
		We shall assume
		that $e_2$ is the precolored edge
		of $H_2$, $e_1$ is the edge of $H_1$ corresponding to $e_2$,
		and that there is no edge colored $d$ under
		$\varphi$.
		
		If the restriction of $\varphi$ to $H_1$ is extendable to a
		$(d-1)$-edge coloring of $H_1$, then since the same holds
		for the restriction of $\varphi$ to $H_2$, $\varphi$ is extendable to
		a $d$-edge coloring of $Q_d$; so
		assume that the restriction of $\varphi$ to $H_1$ is not
		extendable. Since at least three distinct colors appear under $\varphi$,
		this implies that 
		
		\begin{itemize}
		    
		    \item[(a)] $d=4$, and there is a dimensional matching
		    in $H_1$ with three edges with three different colors; or
		    
		    \item[(b)] there is an edge $uv$ of $H_1$ that is not precolored,
		    but $uv$ is adjacent to an edge colored $i$, 
		    for $i=1,\dots, d-1$; or
		    
		    \item[(c)] 
		there is a vertex $u$ incident to $k$ precolored edges
		and every edge incident with $u$ in $H_1$, which is not precolored,
		is adjacent to an edge precolored by some fixed color $c_1$.
		    
		\end{itemize}

		\bigskip
				
		\noindent
		{\bf Subcase A.} (a) holds: 
		
		\medskip
		
		Without loss of generality
		we assume that $\varphi(e_2)=1$. If $e_1$ is adjacent both to the
	    edge precolored $2$ and to the edge precolored $3$, then it is
	    straightforward that $\varphi$ is extendable (because all precolored
	    edges of $H_1$ lie in the same dimensional matching).
		Otherwise,
		either the edge colored $2$ or the edge colored $3$ is 
		not adjacent to $e_1$, suppose e.g. that this holds for the edge $e'_1$ colored
		$2$. The precoloring obtained from the restriction of $\varphi$ to $H_1$
		by removing color $2$ from $e'_1$ is extendable to a proper edge
		edge coloring $f_1$ using colors $1,3,4$, and the precoloring obtained from the
		restriction of $\varphi$ to $H_2$ by in addition coloring the edge $e'_2$,
		corresponding to $e'_1$, by the color $f_1(e'_1)$ is extendable
		to a proper edge coloring $f_2$ using colors $1,3,4$. Now, by recoloring
		$e'_1$ and $e'_2$ by color $2$, and thereafter coloring all edges of $M$
		by the color missing at its endpoints, we obtain an extension of $\varphi$.

\bigskip
				
		\noindent
		{\bf Subcase B.} (b) holds: 
		
		\medskip

		Without loss of generality,
		we assume that $\varphi(e_2)=1$. If $e_1$ is not precolored and
		not adjacent to the edge $e'_1$ in $H_1$ precolored $1$, then we construct
		an extension of $\varphi$ in the following way: 
		remove color $1$ from all edges
		colored $1$ under $\varphi$. The resulting precoloring of $H_1$
		is, by Theorem \ref{th:hypercube},
		extendable to a proper edge coloring using colors $2,\dots, d$.
		By coloring $H_2$ correspondingly, then recoloring $e_2$, $e'_1$
		and their corresponding edges in $H_1$ and $H_2$, respectively, by color $1$,
		and thereafter coloring every edge of $M$ by the color missing at its endpoints,
		we obtain an extension of $\varphi$.
		
		Suppose now that $e_1$ is precolored or adjacent to $e'_1$.
		Let us first assume that
		there is some precolored edge $e''_1$ of $H_1$ that is not adjacent to
		$e_1$ and not colored $1$.
		Suppose, for instance, that $\varphi(e''_1)=2$. By removing
		the color $2$ from $e''_1$, we obtain a precoloring of $H_1$ that is
		extendable to a proper edge coloring $f_1$ using colors $1,3,4,\dots, d$.
		Moreover, the precoloring of $H_2$ obtained from the restriction of $\varphi$ 
		to $H_2$ by additionally coloring the edge $e''_2$, corresponding to $e''_1$,
		by the color $f_1(e''_1)$
		is extendable to a proper edge coloring $f_2$ using colors
		$1,3,4,\dots,d$. By recoloring $e''_1$ and $e''_2$ by color $2$, and
		thereafter coloring every edge of $M$ by the color missing at its endpoints,
		we obtain an extension of $\varphi$.
		
		Suppose now that all $\varphi$-precolored edges with colors distinct
		from $1$ are adjacent to $e_1$. If $e_1$ is precolored, then
		$\varphi$ satisfies (C2) (with $d-1$ in place of $d$),
		so we may assume that $e_1$ is not precolored; then $e_1=uv$.
		Moreover if $u$ or $v$ is incident with only one precolored edge
		that is colored $1$, then $\varphi$ satisfies (C2), so
		we assume that either $u$ or $v$ is incident with
		two edges precolored $1$ and $2$, respectively, under $\varphi$.
		Now, by removing color $2$ from the edge $e'$ $\varphi$-colored $2$,
		we obtain a precoloring that is extendable to a proper edge coloring
		$f_1$ using colors $1,3,\dots, d$. Moreover, the precoloring obtained
		from the restriction of $\varphi$ to $H_2$ by in addition 
		coloring the edge corresponding to $e'$ by the color $f_1(e')$
		is extendable to a proper edge coloring $f_2$
		using colors $1,3,\dots, d$. From $f_1$ and $f_2$ we obtain an
		extension of $\varphi$ by recoloring $e'$ and its corresponding copy in $H_2$
		by color $2$, and thereafter coloring every edge of $M$ by 
		the color missing at its endpoints.

		\bigskip

		\noindent
		{\bf Subcase C.} (c) holds: 
		
		\medskip
		
		Let us 
		first assume that at least three colors
		appear in the restriction of $\varphi$ to $H_1$. 
		If $e_1$ is not incident with $u$, then
		there is an edge $e' \neq e_1$ in $H_1$, such that
		$\varphi(e')=c$, $e'$ is not adjacent to $e_1$ and 
		$e'$ is the only edge in $H_1$ with color $c$
		under $\varphi$. Suppose that $\varphi(e_2) \neq c$.
		Then by removing the color $c$ from the restriction of $\varphi$
		to $H_1$, we obtain a precoloring $\varphi_1$ that is extendable
		to a proper edge coloring $f_1$ of $H_1$ using colors 
		$\{1,\dots,d\} \setminus \{c\}$. (Note that $f_1(e') = c_1$.)
		Moreover, there is a similar
		extension $f_2$ of the restriction of $\varphi$ to $H_2$ where 
		the edge of $H_2$ corresponding to $e'$ 
		is colored $f_1(e')$. Now, by recoloring 
		$e'$ and its corresponding copy in $H_2$ by color $c$,
		we obtain an extension of $\varphi$ as before.
		
		If $\varphi(e_2) = c$ and $e_1$ is not precolored under $\varphi$, 
		then we proceed similarly as in the preceding
		paragraph,
		except that after constructing the coloring $f_1$
		of $H_1$ as in the preceding paragraph, we define $f_2$ as the coloring
		of $H_2$ corresponding to $f_1$ and then color 
		$e'$ and $e_1$, and the corresponding edges of $H_2$, by color $c$.
		On the other hand, if $\varphi(e_2) = c$ and 
		$e_1$ is precolored under $\varphi$, then
		$\varphi(e_1)=c_1$ because $e_1$ is not incident with $u$; 
		now, since at least three distinct colors
		are used by $\varphi$ on edges in $H_1$, we may
		clearly choose another $\varphi$-precolored
		edge incident with $u$ as our edge $e'$,
		and then proceed as in the preceding paragraph.
		
		Now assume that $e_1$ is incident with $u$. 
		If $\varphi(e_2) =c_1$, then $\varphi \in \mathcal{C}$,
		so we assume that $\varphi(e_2) \neq c_1$.
		If there is a color $c \neq \varphi(e_2)$ 
		appearing on precisely one edge $e' \neq e_1$ of $H_1$,
		then we consider the restriction of $\varphi$ to $H_1$
		where color $c$ is removed, and proceed as before;
		otherwise, since at least three colors appear in $H_1$ under
		$\varphi$, it follows that $e_1$ is not adjacent to 
		any edge precolored $c_1$ under $\varphi$. Thus
		by removing color $c_1$ from any edge in $H_1$
		precolored by color $c_1$ under $\varphi$,
		we obtain a precoloring that is extendable
		to a proper edge coloring of $H_1$ using colors
		$\{1,\dots,d\} \setminus \{c_1\}$. Moreover, there
		is a similar extension $f_2$ of the restriction of $\varphi$
		to $H_2$ where for any edge $e'_2$ corresponding
		to an edge $e'_1$ of $H_1$ with $\varphi(e'_1) = c_1$,
		we have $f_2(e'_2) = f_1(e'_1)$. From $f_1$ and 
		$f_2$ we may construct an extension of $\varphi$ by recoloring
		any such pair of edges by color $c_1$.

		Let us now consider the case
		when only two colors appear in the restriction of
		$\varphi$ to $H_1$. Since at least 
		three colors appear on edges under $\varphi$,
		it follows that $\varphi(e_2)$ does not appear in $H_1$ under $\varphi$.
		Without loss of generality we assume that 
		$\varphi(e_2) = 2$, 
		color $3$ appears
		on exactly one edge $e'$ in $H_1$, and color $1$
		is the third color used by $\varphi$. If $e' \neq e_1$,
		then we consider the precoloring of $H_1$ obtained from 
		the restriction of $\varphi$ to $H_1$ by removing color $3$.
		There is an extension of this precoloring of $H_1$ using
		colors $\{1,\dots,d\} \setminus \{3\}$ such that
		$\varphi(e')=1$. 
		Let $e_2'$ be the edge of $H_2$ corresponding to $e'$. 
		Then the precoloring obtained from the restriction of $\varphi$ to $H_2$
		by additionally coloring $e'_2$ by color $1$ is extendable to a coloring
		using colors $\{1,\dots,d\} \setminus \{3\}$.
		Now, by recoloring $e'$ and $e'_2$ by color $3$, we can construct
		an extension of $\varphi$.
		
		If, on the other hand, $e'=e_1$, then $e_1$ is not adjacent to any
		edge colored $1$. Let $E'$ be the set of edges colored
		$1$ under $\varphi$.
		If $d >4$, then $|E'| \geq 3$ and we recolor all
		edges in $E'$ by colors $2,3,4$ so that at least one
		edge is colored $i$, $i=2,3,4$.
		This yields a precoloring $\varphi_1$ that, by the induction hypothesis,
		is extendable
		to a proper edge coloring $f_1$ of $H_1$ using colors $2,\dots,d$,
		because the precolored edges form a matching which is colored by 
		at least three distinct colors.
		Next, consider the precoloring $\varphi_2$ of $H_2$
		obtained from the restriction of $\varphi$
		to $H_2$ by setting $\varphi_2(e'_2) = f_1(e'_1)$ 
		for any edge $e'_2 \in E(H_2)$ corresponding to an edge $e'_1 \in E'$.
		The $\varphi_2$-precolored edges form a matching consisting
		of $d-1$ edges, where edges corresponding to $E'$ are colored by at least
		three distinct colors, so
		by the induction hypothesis,
		there is an extension $f_2$ of $\varphi_2$,
		where $f_2(e'_2) = f_1(e'_1)$ for any edge $e'_2 \in E(H_2)$ 
		corresponding to an edge $e'_1 \in E'$. We may now obtain an extension of
		$\varphi$ as before.
		
		It remains to consider the case $d=4$.
		By symmetry of the hypercube,
		it suffices to consider the two cases when all edges in $H_1$
		are in the same dimensional matching, and the case when the two edges precolored
		$1$ are in different dimensional matchings, one of which is necessarily the
		same as the dimensional matching containing $e'$. In both cases it is a
		straightforward exercise to check that there is an extension of $\varphi$
		where two edges in $H_1$, and their 
		corresponding edges in $H_2$, are  the only edges colored $4$; 
		and, moreover, these
		two edges of $H_1$ ($H_2$) lie in a dimensional 
		matching with no precolored edges.
	\end{proof}


	\begin{lemma}
	\label{lem:allcolors}
		If all colors $1,\dots,d$ appear under $\varphi$,
		and $\varphi \notin \mathcal{C}$,
		then $\varphi$ is extendable.
	\end{lemma}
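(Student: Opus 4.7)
The plan is to proceed by induction on $d$, mirroring the two-case structure of Lemmas \ref{lem:mono}, \ref{lem:twocolors}, and \ref{lem:severalcolors}. Since $\varphi$ uses each of the $d$ colors exactly once, I would fix a dimensional matching $M$ of $Q_d$, write $Q_d - M = H_1 \cup H_2$ with $H_i \cong Q_{d-1}$, and split into Case~1 (every dimensional matching contains exactly one precolored edge) and Case~2 (some dimensional matching contains no precolored edge).

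In Case~1 I would take $M$ to contain the precolored edge $e_M = u_1 u_2$ of color $1$, so that $H_1 \cup H_2$ carries $d-1$ precolored edges using colors $\{2,\dots,d\}$; in Case~2 I would take $M$ precoloring-free. In each case I would distinguish subcases by how the remaining precolored edges distribute between $H_1$ and $H_2$: all on one side; both sides carrying at most $d-3$ (resp.\ $d-2$); or $d-2$ (resp.\ $d-1$) on one side and $1$ on the other. The balanced subcases admit direct inductive arguments: apply the induction hypothesis (Theorem~\ref{th:dprecol}) to each $H_i$, using palette $\{2,\dots,d\}$ in Case~1 and $\{1,\dots,d-1\}$ in Case~2, and color the $M$-edges with the remaining color (or, in a mirrored-coloring setup, with the color missing at each pair of corresponding endpoints).

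The main obstacle is the subcases in which the restriction $\varphi|_{H_1}$ falls into $\mathcal{C}$ as a precoloring of $Q_{d-1}$, causing the induction step on $H_1$ to fail. In these situations I would argue that the combinatorial obstruction guaranteed by $\varphi|_{H_1} \in \mathcal{C}$ lifts to a global obstruction in $Q_d$: the edge $e_M$ (in Case~1), together with the colors already forced around the obstruction vertex $u \in V(H_1)$, typically causes $\varphi$ itself to satisfy condition~(1),~(2) or~(3) from the definition of $\mathcal{C}$, contradicting $\varphi \notin \mathcal{C}$. When no such lift occurs, I would instead perform a recoloring along a carefully chosen bichromatic alternating cycle, analogous to Claims~\ref{cl:onecolor} and~\ref{cl:PartCol}, to swap the color of $e_M$ or of a precolored edge of $H_1$, thereby reducing to a subcase already handled.

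The hardest anticipated step is Subcase~2.1, where all $d$ precolored edges lie in $H_1$. Then $\varphi|_{H_1}$ is a precoloring of $Q_{d-1}$ with $d$ edges, which does not directly match the inductive framework (Theorem~\ref{th:dprecol} assumes exactly $d'$ precolored edges in $Q_{d'}$). I plan to further decompose $H_1$ along an auxiliary dimensional matching $M'$ of $H_1$, chosen so that both components of $H_1 - M'$ carry at most $d-2$ precolored edges, and then apply the induction hypothesis within each $(d-2)$-dimensional subhypercube; a list-coloring step via Galvin's theorem (Theorem~\ref{th:Galvin}) on the uncolored edges in $M'$ would then close up a proper $d$-coloring of $H_1$. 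This coloring is mirrored to $H_2$, and each $M$-edge of $Q_d$ receives the unique color missing at its endpoints, yielding the required extension of $\varphi$.
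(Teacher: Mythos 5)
Your proposal has genuine gaps at the two places where this lemma is actually hard. First, the ``balanced'' subcases are not direct, and this is the heart of the matter: since the $d$ precolored edges carry all $d$ colors, whenever both $H_1$ and $H_2$ contain precolored edges there is no single leftover color for $M$, and applying the induction hypothesis to $H_1$ and $H_2$ independently gives two colorings that need not agree on which color is missing at corresponding endpoints of an $M$-edge. The mirrored-coloring device you invoke requires the coloring of $H_2$ to be the copy of the coloring of $H_1$, but $H_2$'s precolored edges use colors that do not occur in $H_1$, so the mirror image of an extension of $\varphi|_{H_1}$ will generally violate $\varphi|_{H_2}$. The paper spends most of its proof exactly here (its Cases 2--4): it analyzes whether each precolored edge of $H_1$ \emph{corresponds} to a precolored edge of $H_2$, and when it does not, it either transports a single color across $M$ by recoloring one edge and its mirror image, or builds small $4$-cycle gadgets in both halves to make the two colorings compatible; none of this machinery appears in your plan. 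Second, your treatment of the subcase where all $d$ precolored edges lie in $H_1$ would fail as described: after coloring the two $(d-2)$-dimensional halves of $H_1-M'$ independently, an edge of $M'$ needs a color missing at both endpoints, and the two $2$-element sets of missing colors can be disjoint, so the Galvin/list step has no guarantee of nonempty lists. The paper's argument for this case is a short trick that essentially uses the hypothesis that every color appears exactly once: unless some edge is adjacent to all $d$ precolored edges (which would put $\varphi$ in $\mathcal{C}$), there are two precolored edges with no common adjacent edge; uncolor one of them to get a $(d-1)$-edge precoloring of $Q_{d-1}$ not in $\mathcal{C}$, extend it by induction using the other $d-1$ colors, and then restore the removed color, which is fresh. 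Your proposal does not identify this idea.

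A smaller point: your Case 1 (every dimensional matching contains exactly one precolored edge) needs none of the proposed subcase analysis --- since the $d$ colors are pairwise distinct, one simply colors each dimensional matching monochromatically with the color of its unique precolored edge. Recognizing this lets the entire proof assume from the outset that $M$ contains no precolored edge, which is how the paper proceeds.
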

	
	\begin{proof}
		Since all colors are present under $\varphi$, every color
		appears on precisely one edge.
		Let us first note that if every dimensional matching contains
		at most one precolored edge, then trivially $\varphi$ is extendable.
		Thus, for the rest of the proof we assume that there is a
		dimensional matching $M$ that does not contain any precolored
		edge. Let $H_1$ and $H_2$ be the components of $Q_d -M$.

		\bigskip

		\noindent
		{\bf Case 1.} {\em No precolored edges are in $H_2$:}
		
		\medskip
		
		If there is some edge $e$ that is not precolored, and
		adjacent to all
		precolored edges in $H_1$, then $\varphi \in \mathcal{C}$.
		On the other hand, if there is a precolored edge $e$
		such that removing the color from $e$ yields a precoloring
		$\varphi_1$ of $H_1$ that is not in $\mathcal{C}_1$ 
		(with $d-1$ in place of $d$) or $\mathcal{C}_4$,
		then the induction
		hypothesis yields that there is an extension $f$ of 
		$\varphi_1$
		using all colors except the removed one. Suppose e.g. that
		the color from $e$ under $\varphi$ was removed in $\varphi_1$;
		then by recoloring $e$ with $\varphi(e)$ and retaining the color
		of every other edge in $H_1$ under $f$, 
		we obtain a proper $d$-edge coloring
		of $H_1$ that is an extension of $\varphi$; by coloring
		$H_2$ correspondingly and then coloring every edge of $M$
		by the color missing at its endpoints, we obtain an extension of $\varphi$.
		
		Now, suppose that $e$ is a precolored edge of $H_1$, 
		and removing
		the color of $e$ yields a coloring $\varphi_1$ that satisfies (C1).
		Let $e'$ be another $\varphi$-precolored edge of
		$H_1$ that is adjacent to a minimum number of other
		$\varphi$-precolored edges of $H_1$. 
		Then the precoloring obtained from $\varphi$
		by removing the color from $e'$ does not satisfy (C4);
		suppose that it
		satisfies (C1). Then
		either $\varphi \in \mathcal{C}_1$, or there are
		non-precolored
		edges $uv, ux \in E(H_1)$ satisfying that $e'$ is incident
		with $v$, $e$ is incident with $x$, and all other precolored edges
		are incident with $u$. Now, 
		since $u$ is incident with at least two precolored edges
		(from different dimensional matchings),
		by instead removing the color
		on a precolored edge incident with $u$, we obtain
		a precoloring that does not satisfy (C1) or (C4).
		We conclude that if $\varphi_1 \in \mathcal{C}_1$,
		then either $\varphi$ is extendable or $\varphi$ satisfies (C1).

		It remains to consider the case when $\varphi_1$
		satisfies (C4).
		Suppose, consequently, that $d=4$ and that removing
		the color from any precolored edge of $H_1$ yields a precoloring
		that satisfies (C4); then the precolored edges of $H_1$
		lie in a dimensional matching $M'$.
		It is easily seen that since all precolored
		edges lie in $M'$, there is a proper $4$-edge coloring of $H_1$
		which agrees with $\varphi$. By coloring $H_2$
		correspondingly and thereafter coloring all edges of
		$M$ by the color in $\{1,2,3,4\}$ missing at its
		endpoints, we obtain an extension of $\varphi$.

		\bigskip
		
		\noindent
		{\bf Case 2.} {\em Both $H_1$ and $H_2$ contain at most $d-3$
		precolored edges:}
		
		\medskip
			
		Note that neither the restriction of $\varphi$ to $H_1$
		nor to $H_2$ satisfies any of the
		conditions (C1)-(C4) (with $d-1$ in place of $d$).
		Moreover, since $H_1$ and $H_2$ contain altogether $d$ precolored edges, 
		$d \geq 6$, and thus both $H_1$ and $H_2$
		contains at least three precolored edges. We consider
		two different cases.
	
		\bigskip

		\noindent
		{\bf Case 2.1.} {\em There is some edge $e$ in $H_1$ (or $H_2$)
		that is precolored and the corresponding edge of $H_2$ ($H_1$)
		is not precolored:}
		
		\medskip
	
		Without loss of generality we
		assume that $e_1$ is such an edge in $H_1$, $\varphi(e_1) = 1$,
		and that $e_2$ is the edge of $H_2$ corresponding to $e_1$.
		Since both $H_1$ and $H_2$ contain precolored edges,
		there is some color which appears in $H_2$ but not in $H_1$.
		Suppose first that some precolored edge of $H_2$ is not
		adjacent to $e_2$. Assume without loss of generality
		that such an edge is precolored $d$ in $H_2$. 
		Then we construct a new precoloring $\varphi'$
		from $\varphi$ by coloring $e_2$ by color $d$, and
		recoloring $e_1$ by color $d$. The restrictions of $\varphi'$
		to both $H_1$ and $H_2$ are, by the induction 
		hypothesis, extendable to proper edge colorings
		using colors $2,\dots, d$, respectively. Now by recoloring
		$e_1$ and $e_2$ by color $1$ we obtain proper edge colorings
		$f_1$ and $f_2$ of $H_1$ and $H_2$, respectively, satisfying
		that the color in $\{1,\dots,d\}$ not appearing at a vertex $v$ of $H_1$
		is also missing at the corresponding vertex of $H_2$. 
		Since for $i=1,2$, $f_i$ is an extension 
		of the restriction of $\varphi$ to $H_i$, $\varphi$ is extendable.
		
		Suppose now instead that every precolored edge of $H_2$
		is adjacent to $e_2$. In fact, we may assume that if
		$e \in E(H_1)$, $e$ is precolored under $\varphi$ and the
		corresponding edge $e'$ of $H_2$ is not precolored under $\varphi$,
		then $e'$ is adjacent to all precolored edges of $H_2$;
		otherwise we proceed as in the preceding paragraph.
		If all precolored edges of $H_2$ are incident with
		a common vertex, then since there are at least three precolored
		edges in $H_i$, $i=1,2$, 
		this means that all precolored edges of $H_1$ are 
		incident with the
		corresponding vertex of $H_1$; and so,
		$\varphi \in \mathcal{C}_1$.
		Assume now that $e_i = u_iv_i$, $i=1,2$, 
		and that both $u_2$ and $v_2$
		are adjacent to precolored edges; since $H_2$ contains
		at least three precolored edges, $e_2$ is the unique edge
		with this property.
		Moreover, since any precolored edge of $H_1$
		satisfies that if the corresponding edge of $H_2$ is not precolored,
		then it is adjacent to all precolored edges of $H_2$,
		it follows that any precolored edge in $H_1$ is incident
		with $u_1$ or $v_1$. This means that the dimensional matching $M_1$
		in $Q_d$ containing $e_1$, contains no other precolored edge.
		Hence, since both $u_2$ and $v_2$ are incident
		with precolored edges, both components of $Q_d-M_1$
		contain at most $d-2$ precolored edges using
		colors $2,\dots, d$.
		Thus,
		by Theorem \ref{th:hypercube}, the restriction
		of $\varphi$ to $Q_d-M_1$
		is extendable to a proper edge coloring of $Q_d-M_1$ 
		using colors $2,\dots,d$.
		By coloring all edges of $M_1$ by color $1$,
		we obtain an extension of $\varphi$.

		\bigskip

		\noindent
		{\bf Case 2.2.} {\em Each precolored edge of
		$H_1$ corresponds to a precolored edge of $H_2$ and
		vice versa:}
	
		\medskip
		
		The conditions imply that
		$H_i$ contains exactly $d/2$ precolored edges.
		
		Suppose first that $d=6$, and let $u_1u_2$ be a precolored edge of $H_1$,
		and $v_1v_2$ be the corresponding edge of $H_2$. Now, since $H_1$
		contain two additional precolored edges which both correspond to
		precolored edges of $H_2$, and $u_1u_2$ is in four $4$-cycles in $H_1$,
		there is a $4$-cycle $u_1u_2u_3u_4u_1$ in $H_1$ such that $u_3u_4$
		is not precolored and the dimensional matching $M_2$, containing
		$u_2u_3$ and $u_4u_1$, does not contain any precolored edge.
		Let $H'_1$ and $H'_2$ be the components of $Q_d-M_2$. 
		Now, since all precolored edges lie on $4$-cycles
		whose non-precolored edges are in $M$, either both or none
		of the precolored edges of such a cycle are in $H'_i$.
		Hence, $H'_i$ contains an even number of precolored edges,
		and so, we may
		proceed as in Case 1 or Case 3
		of the proof of the lemma.

		Now assume that $d \geq 8$.
		If all precolored edges in $H_1$ are incident with
		one common vertex, then $\varphi \in \mathcal{C}$,
		so we assume that this is not the case;
		thus, there are two precolored edges in $H_1$
		(and thus $H_2$) that are not adjacent. In $H_2$ we assume that
		these edges are colored $d/2+1$ and $d$ respectively. Let $v_1 v_2$
		be the edge precolored $d$ in $H_2$ and let $u_1 u_2$ be the
		corresponding edge of $H_1$.
		Without loss of generality, we assume that $\varphi(u_1u_2) = d/2$.
		Now, since there are exactly $d/2$ precolored edges
		in both $H_1$ and $H_2$, $d \geq 8$, and
		each edge in $H_i$ is in $d-2$ $4$-cycles in $H_i$,
		there are $4$-cycles $u_1u_2u_3u_4 u_1$ and $v_1v_2v_3v_4 v_1$
		in $H_1$ and $H_2$, respectively,
		such that
		\begin{itemize}
			\item $u_1u_2$ and $v_1v_2$ are the only precolored 
			edges of these $4$-cycles;
			\item $v_3v_4$ is not adjacent to an edge precolored $d/2+1$.
		\end{itemize}
			We construct a precoloring $\varphi_1$ of $H_1$ from the 
			restriction of $\varphi$ to
			$H_1$ by in addition coloring $u_1 u_4$ and $u_2 u_3$ by color 
			$d/2+1$ and by coloring $u_3u_4$
			by color $d/2$. Similarly, we define a 
			precoloring $\varphi_2$ of $H_2$ from the
			restriction of $\varphi$ to $H_2$ by recoloring $v_1v_2$ 
			by $d/2 +1$, and in addition coloring
			$v_3v_4$ by $d/2 +1$,
			and $v_2v_3$ and $v_1v_4$ by color $d/2$. 
			Note that the obtained precolorings are proper.
			Now, since $d/2+3 \leq d-1$ (because $d \geq 8$) 
			and none of $\varphi_1$ and
			$\varphi_2$ satisfy any of the conditions
			(C1)-(C4) (with $d-1$ in place of $d$), it follows
			from Theorem \ref{th:hypercube} and the induction hypothesis
			that for $i=1,2$,
			there is a proper edge coloring $f_i$ of $H_i$ 
			using colors $1,\dots,d-1$
			that is an extension of $\varphi_i$.
			Now by recoloring all the edges
			$u_2u_3, u_1u_4, v_1v_2, v_3v_4$ by color $d$
			we obtain two proper edge colorings
			such that by coloring every edge of $M$
			by the color in $\{1,\dots,d\}$
			missing at its endpoints, we obtain an extension of $\varphi$.
		
				\bigskip
		
		\noindent
		{\bf Case 3.} {\em $H_1$ contains $d-2$ precolored edges and
		$H_2$ contains $2$
		precolored edges:}
		
		\medskip
		
		We consider two different subcases.
		
			\bigskip

		\noindent
		{\bf Case 3.1.} {\em 
		No precolored edge of $H_1$ satisfies that the corresponding
			edge of $H_2$ is non-precolored:}
		
		\medskip
			
			The conditions imply that
			$d=4$. Without loss
			of generality we assume that $H_1$ contains two edges
			$e_1$ and $e'_1$ precolored $1$ and $2$, respectively.
			Let $e_2$ and $e'_2$ be the corresponding edges of $H_2$.
			By symmetry, it suffices to
			consider the following different cases:
			
			\begin{itemize}
			    \item [(a)] $e_1$ and $e'_1$ are adjacent;
			    
			    \item [(b)] $e_1$ and $e'_1$ are not adjacent but
			    lie on a common $4$-cycle;
			    
			    \item [(c)] $e_1$ and $e'_1$ are not adjacent and do not
			    lie on a common $4$-cycle.
			\end{itemize}
			
			If (a) holds, then $\varphi \in \mathcal{C}_4$.
			Suppose now that (b) holds. It suffices to prove
			that there are perfect matchings $M_1$ and $M_2$ 
			in $Q_d$, where $M_i$ contains all edges precolored $i$
		    and no other precolored edges, and where $M_1$ and $M_2$ satisfy that
		    the precolored edges of $Q_d-M_1\cup M_2$ lie in different
		    components.
		    We construct $M_1$
		    in the following way: include
		    $e_1$ and the unique non-precolored edge $e_3$ of $H_1$
		    that is in the same dimensional matching as $e_1$
		    and contained in a $4$-cycle with $e_1$; 
		    from $H_2$ we select the two edges corresponding
		    to the two opposite non-precolored edges of the $4$-cycle
		    containing $e_1$ and $e_3$;
		    for the remaining
		    edges of $M_1$ we choose four edges from $M$ that are 
		    adjacent to none of the edges $e_1$ and $e_3$.
		    We now define $M_2$ to consist of the the
		    edges from the unique perfect matching
		    in $H_1 - M_1$ containing $e'_1$ and of the edges from a perfect
		    matching of $H_2$ with no precolored edges.
		    
		    Suppose now that (c) holds. By symmetry, it suffices to
		    consider the two cases when $e_1$ and $e'_1$ are in the same dimensional
		    matching and when they are not. If the former holds, then we define
		    the matchings $M_1$ and $M_2$ exactly as in the preceding paragraph,
		    and it follows that $\varphi$ is extendable.
		    If $e_1$ and $e'_1$ are in different dimensional matchings,
		    then we select $M_1$ as the union of the dimensional matching
		    of $H_1$ containing $e_1$ and the unique dimensional matching
		    of $H_2$ with no precolored edge. As before, we can then
		    choose a perfect matching $M_2$ containing $e'_1$ and no
		    other precolored edges; the details are omitted.

			\bigskip
			
						\bigskip

		\noindent
		{\bf Case 3.2.} {\em 
		There is a
			precolored edge $e_1 =u_1v_1$ in $H_1$
			such that the corresponding edge of $H_2$ is not precolored:}
		
		\medskip
			
			Let 
			$e_2 = u_2v_2$ be the edge in $H_2$ corresponding to $e_1$.
			If some precolored edge of $H_2$ is not adjacent to $e_2$,
			then we may proceed as above:
			Assume without loss of generality
		    that such an edge is precolored $d$ in $H_2$, and that $\varphi(e_1)=1$.
		    Then we construct a new precoloring $\varphi'$
		    from $\varphi$ by coloring $e_2$ by color $d$, and
		    recoloring $e_1$ by color $d$. 
		    $H_1$ contains $d-2$ $\varphi'$-precolored edges,
		    so the restriction of $\varphi'$ to $H_1$ is extendable
		    by Theorem \ref{th:hypercube}. $H_2$ contains $3$ $\varphi'$-precolored
		    edges, so it is extendable unless $d=4$
		    and the restriction $\varphi_2$ of $\varphi'$
		    satisfies (C2) (with $d-1$ in place of $d$).
		    Assuming $d > 4$, we can choose these extensions so that they
		    use colors $2,\dots, d$, respectively, and we obtain
		    an extension of $\varphi$ by recoloring
		    $e_1$ and $e_2$ by color $1$, and thereafter coloring the edges of $M$.
		    If $d=4$, and $\varphi_2$ satisfies (C2), then $e_2$
		    and the two $\varphi$-precolored edges of $H_2$ form a matching, and none
		    of the $\varphi$-precolored edges in $H_2$ are adjacent to $e_2$.
		    It follows that for at least one of these two precolored
		    edges, the corresponding
		    edge in $H_1$ is not precolored; denote this edge by $e'_2$
		    and assume $\varphi(e'_2)=4$. 
		    Now, by Theorem \ref{th:hypercube},
		    the restriction of $\varphi$ to $H_1$ is extendable to a proper
		    edge coloring $f_1$ using colors $1,2,3$. Moreover, the
		    precoloring of $H_2$ obtained from the restriction of $\varphi$ by
		    recoloring $e'_2$ by the color of the corresponding edge $e'_1$ of $H_1$
		    under $f_1$ is, by Theorem \ref{th:hypercube}, extendable to a proper
		    edge coloring $f_2$ using colors $1,2,3$. By recoloring $e'_1$
		    and $e'_2$ by color $4$, and thereafter coloring the edges of $M$,
		    we obtain an extension of $\varphi$.

			Let us now assume that both precolored edges
			of $H_2$ are adjacent to $e_2$. In fact, we may assume that every precolored
			edge in $H_1$ either corresponds to a precolored edge of $H_2$ or
			is adjacent to both precolored edges of $H_2$. 
			Now, if both precolored edges
			of $H_2$ are incident with a common vertex $v$, then this implies
			that $\varphi \in \mathcal{C}$; so assume that $u_2$ is incident
			with one precolored edge and that $v_2$ is incident with one
			precolored edge.
			Clearly, this implies that at most $4$ edges are precolored in $H_1$,
			and thus $d \leq 6$.
			
			So let us assume that $d \leq 6$ and that
			$e_1=u_1v_1$ is precolored $1$. If the dimensional
			matching $M_1$ containing $e_1$ contains no other precolored edges,
			then the restriction $\varphi'$ of $\varphi$ to $Q_d -M_1$ is a
			precoloring of $d-1$ edges using $d-1$ colors. Furthermore,
			both components of $Q_d-M_1$ contain at most $d-2$
			precolored edges, so by Theorem \ref{th:hypercube},
			$\varphi'$ is extendable to to a proper edge coloring of $Q_d-M_1$
			using colors $2,\dots,d$. By coloring all edges of $M_1$ by color $1$,
			we obtain an extension of $\varphi$.
			
			If $M_1$ contains more than one precolored edge, then it contains exactly
			two precolored edges, $u_1v_1$ and $z_1t_1$, where $u_1v_1z_1t_1u_1$
			is a $4$-cycle in $H_1$. Let $u_2v_2z_2t_2u_2$ be the corresponding
			$4$-cycle of $H_2$, where $u_2t_2$ and $v_2z_2$ are precolored.
			We define $H_3$ to be the $3$-dimensional hypercube
			containing vertices $u_1,v_1, z_1, t_1, u_2,v_2, z_2, t_2$; then all
			precolored edges of $Q_d$ lie in $H_3$. Since $d \geq 4$, there
			is a dimensional matching $M_3$ in $Q_d$ which does not contain any edge
			from $H_3$.
			It follows that if $H'_1$ and $H'_2$ are the components of $Q_d-M_3$,
			then either $H'_1$ or $H'_2$ contains all precolored edges of $Q_d$;
			thus we may proceed as in the Case 1 above when $H_1$ contains exactly
			$d$ precolored edges.

					\bigskip
		
		\noindent
		{\bf Case 4.} {\em $H_1$ contains $d-1$ precolored edges and 
		$H_2$ contains $1$
		precolored edge:}
		
		\medskip
				
			Without loss of generality
			we assume that the edge in $H_2$ is precolored $d$.
			We first consider the case when the restriction of $\varphi$ to $H_1$ is
			extendable (as a precoloring of $Q_{d-1}$). Suppose first
			that there is some precolored edge $e_1$ in $H_1$
			such that the corresponding
			edge of $H_2$ is not precolored or adjacent to the precolored edge of $H_2$.
			Without loss of generality we assume that $\varphi(e_1) = 1$. 
			We define a new
			precoloring $\varphi'$ from $\varphi$ by recoloring $e_1$ 
			by color $d$ and by coloring
			$e_2$ by color $d$; this precoloring is proper, and, moreover, for $i=1,2$,
			the restriction of $\varphi'$ to $H_i$ is 
			extendable to a proper edge coloring $f_i$
			using colors $2,\dots,d$. By recoloring $e_1$ and $e_2$ by color $1$ and
			coloring every edge of $M$ by the color in 
			$\{1,\dots,d\}$ that is missing at
			its endpoints, we obtain an extension of $\varphi$.
			
			Suppose now that every precolored edge of $H_1$ either corresponds
			to a precolored edge of $H_2$, or 
			that the corresponding edge of $H_2$ is adjacent to
			a precolored edge of $H_2$.
			Since the restriction of $\varphi$ to $H_1$ is extendable, it follows
			that if $e_1 = u_1v_1$ is the edge of $H_1$ corresponding to the 
			precolored edge $e_2 =u_2v_2$ of
			$H_2$, then $e_1$ is precolored under $H_1$. Moreover, 
			since $\varphi \notin \mathcal{C}$,
			there are at least two precolored edges of $H_1$ incident with $u_1$ 
			and similarly for $v_1$. 
			Suppose
			e.g. that $\varphi(e_1)= 1$ and that color $2$ does 
			appear at $v_1$ under $\varphi$, but
			not at $u_1$, and that
			color $3$ appears at $u_1$.
			We define a new precoloring $\varphi'$ of $Q_d$ by recoloring
			the edge with color $3$ under $\varphi$ by color $2$,
			and by coloring the corresponding edge of $H_2$ by color $2$.
			Then, by Theorem \ref{th:hypercube},
			the restriction of $\varphi'$ to $H_2$ is extendable
			to a proper edge coloring using colors $1,2,4, \dots, d$,
			and the restriction of $\varphi'$ to $H_1$
			does not satisfy (C1), (C3) or (C4) (with $d-1$ in place of $d$). 
			Furthermore, since $2$ is the only color that appears on two edges
			under $\varphi'$, and these two edges are both adjacent to $e_1$,
			$\varphi'$ does not satisfy (C2). Hence,
			by the induction hypothesis,
			the restriction of $\varphi'$ to $H_1$
			is extendable to a proper edge coloring 
			$f_1$ using colors $1,2,4,\dots,d$.
			By recoloring the edges incident with $u_1$ and $u_2$ 
			with color $2$ by color
			$3$, we obtain proper edges colorings of $H_1$ and $H_2$, 
			such that we may color
			any edge of $M$ by the color missing at its endpoints
			to obtain an extension of $\varphi$.

			Let us now consider the case when the restriction 
			of $\varphi$ to $H_1$ is not extendable.
			Then there is some edge $u_1v_1$ in $H_1$ such that all precolored edges
			of $H_1$ are incident with $u_1$ or $v_1$ and $u_1v_1$ is not precolored.
			Without loss of generality, we assume that the 
			edge in $H_2$ is precolored $d$,
			there is some edge $e_1$ precolored $3$ incident with $u_1$ 
			such that the corresponding edge $e_2$ of $H_2$ is not precolored, 
			and there is an edge precolored $2$ incident with $v_1$.
			We define a new precoloring $\varphi'$ from $\varphi$ by recoloring $e_1$
			by color $2$ and coloring $e_2$ by color $2$.
			We may now finish the proof by proceeding exactly
			as in the preceding paragraph.
	\end{proof}
	
	This completes the proof of Theorem \ref{th:dprecol}.

\section{Concluding Remarks}

	In this paper we have obtained analogues for hypercubes of some
	classic results on completing partial Latin squares; 
	in general we believe that the
	following might be true. Here, $G^d$ denotes the $d$th power of
	the cartesian product of $G$ with itself.

\begin{conjecture}
\label{conj:upperbound}
	If $n$ and $d$ are positive integers,
	and $\varphi$ is a proper edge precoloring of $(K_{n,n})^d$
	with at most $nd-1$ precolored edges, then $\varphi$
	extends to a proper $nd$-edge coloring of $(K_{n,n})^d$.
\end{conjecture}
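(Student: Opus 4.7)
My plan is to induct on $d$, with base case $d = 1$ being Evans' theorem for $K_{n,n}$ (proved by Smetaniuk and independently by Andersen--Hilton), and with the $n = 1$ specialization of the inductive step reducing to the argument of Theorem~\ref{th:hypercube}. The overall strategy is to generalize the two-case split of that proof, replacing the single dimensional matching $\hat M$ by an entire factor direction of $(K_{n,n})^d$.

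For the inductive step, fix one of the $d$ factor directions; deleting all of its edges leaves $2n$ vertex-disjoint slices each isomorphic to $(K_{n,n})^{d-1}$, and the deleted edges decompose into vertex-disjoint fibers, one copy of $K_{n,n}$ for every vertex of the transverse $(K_{n,n})^{d-1}$. The goal is to use the induction hypothesis to color each slice with a palette $C_0 \subseteq \{1,\dots,nd\}$ of size $n(d-1)$, so that the $n$ colors outside $C_0$ are missing at every vertex; each fiber $K_{n,n}$ is then $n$-edge-colored with those remaining colors. In the easy case, a direction $i$ can be chosen so that its edges carry no precolored edges, each slice contains at most $n(d-1) - 1$ precolored edges, and after a relabeling of colors the slice-precolorings use only colors in $C_0$. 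A pigeonhole over the $nd$ dimensional matchings (at most $nd-1$ of which contain a precolored edge) should secure such an $i$ whenever the precolored edges are sufficiently spread out, and the induction hypothesis then closes this case.

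The main obstacle is the Case~2 analogue of Theorem~\ref{th:hypercube}: a single slice may carry up to $nd - 1$ precolored edges, or the precoloring may use more than $n(d-1)$ distinct colors on slice-edges. Imitating the ``remove-and-reinsert'' trick, I would pick a color $c$ used by the precoloring, delete $c$ from every precolored edge, apply the induction hypothesis to each slice using a palette of size $nd - 1$ avoiding $c$, and reinstate $c$ on the originally precolored edges. The serious difficulty -- absent when $n = 1$ -- is that the missing-color sets at the $2n$ vertices of a common fiber are no longer automatically equal as $n$-element subsets of $\{1,\dots,nd\}$, so one cannot simply read off the fiber colors from a unique missing color at each endpoint. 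I expect this step to require a list-coloring argument via Galvin's theorem (Theorem~\ref{th:Galvin}) on an auxiliary bipartite graph of fibers, in the spirit of Case~2 of the proof of Theorem~\ref{th:TwoCubes}, together with a careful case analysis of extremal precolorings on the boundary $nd - 1$ paralleling the work in Section~4 for the characterization with exactly $d$ precolored edges.
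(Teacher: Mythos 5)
This statement is a \emph{conjecture}: the paper offers no proof of it and explicitly notes that it is open whenever $d \geq 2$ and $n \geq 3$ (the settled cases being $d=1$, which is Evans' conjecture, and $n \leq 2$, which follow from Theorem~\ref{th:hypercube} and its method). So there is no proof in the paper to compare your proposal against, and your proposal does not close the gap either: it is a plan whose central difficulty you yourself identify and then defer. Concretely, after coloring the $2n$ slices by induction, each vertex of a fiber $K_{n,n}$ is missing some $n$-set of colors, but these $n$-sets are chosen independently in $2n$ different slices and can be pairwise disjoint; an edge of the fiber then has an empty list of usable colors, and Galvin's theorem gives nothing. In the hypercube case ($n=1$) the paper's Case~2 trick works precisely because the missing set at each vertex is a single color and the two slices are colored \emph{identically} (one is a copy of the other), forcing the missing colors to agree across each fiber edge. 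With $2n > 2$ slices and $n$-element missing sets there is no analogous copying device, and you do not supply a replacement.

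The ``easy case'' also has unaddressed holes. With $n \geq 2$ and up to $nd-1$ precolored edges distributed over only $d$ factor directions, pigeonhole does not produce a direction free of precolored edges (e.g.\ one precolored edge per direction already defeats it), so the fibers may themselves carry precolored edges whose colors need not lie in the complementary palette. Likewise, even when every slice has at most $n(d-1)-1$ precolored edges, distinct slices may be precolored with incompatible color sets whose union exceeds $n(d-1)$, and precolored edges cannot be ``relabeled'' into a common palette $C_0$. These are exactly the obstructions that make the conjecture genuinely open for $n \geq 3$; a correct proof would need a new idea for coordinating the slice colorings, not just an appeal to Theorem~\ref{th:Galvin} and ``careful case analysis.''
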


	Note that this
	is a generalization of both Evans' conjecture and the results
	obtained in this paper; Evans' conjecture is the case $d=1$, and
	the results obtained in this paper resolve the cases
	when $n=1$ and $n=2$; thus this conjecture is open 
	whenever $d \geq 2$ and $n \geq 3$.
		
	Given that a precoloring of at most $d-1$ precolored edges of $Q_d$ 
	or $K_{d,d}$ is
	always extendable, we might ask how many precolored edges
	of a general $d$-regular bipartite graph allow for an extension.
	Trivially, any precoloring of at most one edge of a graph $G$
	can be extended to a $\chi'(G)$-edge coloring of $G$.
	For larger sets of precolored edges, we have the following:

	\begin{proposition}
	\label{prop:dregular}
		For any $d \geq 2$, there is a $d$-regular bipartite graph
		with a precoloring $f$ of only $2$ edges, such that
		$f$ cannot be extended to a proper $d$-edge coloring.		
	\end{proposition}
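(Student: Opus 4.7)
The plan is to construct, for every $d\ge 2$, an explicit $d$-regular bipartite graph $G_d$ together with two vertex-disjoint edges $e_1,e_2$ such that every proper $d$-edge coloring of $G_d$ is forced to assign the same color to $e_1$ and $e_2$. Precoloring $e_1$ and $e_2$ with two distinct colors then automatically yields a proper precoloring of $2$ edges that cannot be extended.

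The construction I would use is the following. Take two vertex-disjoint copies of $K_{d,d}$ with bipartitions $(A_1,B_1)$ and $(A_2,B_2)$, where $A_i=\{a^i_1,\ldots,a^i_d\}$ and $B_i=\{b^i_1,\ldots,b^i_d\}$. Delete the two edges $a^1_1 b^1_1$ and $a^2_1 b^2_1$, and add the two ``cross'' edges $e_1=a^1_1 b^2_1$ and $e_2=a^2_1 b^1_1$. Each of the four affected vertices loses exactly one edge and gains exactly one edge, so the resulting graph $G_d$ is $d$-regular bipartite with bipartition $A_1\cup A_2$ versus $B_1\cup B_2$; the edges $e_1,e_2$ are non-adjacent, so the precoloring $f$ with $f(e_1)=1$ and $f(e_2)=2$ is proper.

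To prove that $f$ cannot be extended, I would assume for contradiction that a proper $d$-edge coloring of $G_d$ agreeing with $f$ exists, and restrict it to the subgraph $K_{d,d}-a^1_1 b^1_1$ on $A_1\cup B_1$, recording its colors in a partial $d\times d$ array $L$ (rows indexed by $a^1_i$, columns by $b^1_j$, cell $(1,1)$ empty). The properness of the extension combined with the colors forced at $a^1_1$ and $b^1_1$ by $e_1,e_2$ translate into: row $1$ of $L$ is a permutation of $\{2,\ldots,d\}$ (color $1$ is already used by $e_1$ at $a^1_1$), column $1$ of $L$ is a permutation of $\{1,3,\ldots,d\}$ (color $2$ is already used by $e_2$ at $b^1_1$), and every other row and column of $L$ is a full permutation of $\{1,\ldots,d\}$.

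The contradiction then comes from a short counting argument on the cells of $L$ of color $2$: color $2$ occurs exactly once in each of the $d$ rows of $L$, and all such occurrences lie in columns with index $\ge 2$ (since column $1$ avoids color $2$), giving $d$ cells in total; on the other hand, each of the $d-1$ full columns $2,\ldots,d$ contains color $2$ exactly once, contributing only $d-1$ cells, whence $d\le d-1$, which is absurd. The main insight is choosing the construction so as to create this Latin-square rigidity; once the graph is in place the counting argument is clean and uniform in $d$, and I do not foresee further difficulties.
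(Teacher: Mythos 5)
Your construction is the $r=2$ instance of the paper's own construction (a cyclic chain of $r$ copies of $K_{d,d}-e$ joined through their degree-$(d-1)$ vertices), and the obstruction is the same, so this is essentially the paper's proof. The only difference is that you explicitly supply the Latin-square counting argument showing the two cross edges are forced to receive the same color, a verification the paper dismisses as straightforward; your version is correct.
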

	
	\begin{proof}
		Let $r> 1$ be a positive integer, and let $G_1,\dots, G_r$
		be $r$ copies of $K_{d,d} -e$, that is the complete bipartite
		graph with $d+d$ vertices with exactly one edge removed.
		From $G_1,\dots, G_r$ we form a $d$-regular graph $H$ by
		for $i=1,\dots,r$
		joining a vertex in $G_i$ of degree $d-1$ with 
		a vertex in $G_{i+1}$ of degree $d-1$ by an edge
		so that all added edges have distinct endpoints
		(indices taken modulo $r$).
		Let $e_1$ and $e_2$ be two distinct edges in $H$ joining
		vertices in distinct copies of $K_{d,d} -e$. We color
		$e_1$ with color $1$, and $e_2$ with color $2$. 
		Since any perfect matching in $H_1$ that contains $e_1$ also contains
		$e_2$, this precoloring cannot be extended
		to a proper $d$-edge coloring of $H$.
	\end{proof}
	
		Note that in the proof of Proposition \ref{prop:dregular},
		there is a similar precoloring with two edges colored $1$,
		which is not extendable to a proper $d$-edge coloring
		of the full graph. Also, the distance between the two 
		precolored edges
		can be made arbitrarily large.
		
		Furthermore, the examples given in the proof of 
		Proposition \ref{prop:dregular} 
		are $2$-connected.
		One may construct examples of 
		arbitrarily large connectivity by
		taking two copies $G_1$ and $G_2$ of $K_{n,n-1}$
		and for each vertex $v$ in $G_1$ of degree $n-1$ adding an edge between
		$v$ and its copy in $G_2$. The resulting graph 
		is $n$-regular, $(n-1)$-connected, 
		and the edge precoloring obtained by coloring any two edges with one
		endpoint in $G_1$ and one endpoint in $G_2$ by color $1$ is
		not extendable to a proper $d$-edge coloring of the full graph.

\section*{Acknowledgements}

We wish to thank the referees for useful comments, and particularly one 
of the referees for a very careful reading that identified
several mistakes and simplified some of the arguments.



\begin{thebibliography}{99}
\addcontentsline{toc}{chapter}{Bibliography}


\bibitem{AlbersonMoore}
M.O. Albertson, E. Moore
Extending graph colorings using no extra colors,
{\em Discrete Mathematics} 234 (2001),
    125--132.


\bibitem{AndersenHilton}
L.D. Andersen, A.J.W. Hilton,
\emph{Thank Evans!,
Proc. London Math. Soc.} 47 (1983), 507--522.

\bibitem{Asratian} 
A.S. Asratian, T.M.J. Denley, R. H\"aggkvist,
\emph{Bipartite graphs and their applications},
Cambridge University Press, Cambridge, 1998.


\bibitem{BrowVojtWanless}
J.M. Browning, P. Vojtechovsky, I.M. Wanless,
Overlapping latin subsquares and full products,
{\em Comment. Math. Univ. Carolin.} 51 (2010) 175-–184.

\bibitem{CasselgrenMarkstromPham}
C.J. Casselgren, K. Markstr\"om, L.A. Pham,
Restricted extension of sparse partial edge colorings of hypercubes,
submitted.

\bibitem{EGHKPS}
K. Edwards, A. Girao, J. van den Heuvel, R.J. Kang, G.J. Puleo,  
J.-S. Sereni,
Extension from Precoloured Sets of Edges,
{\em Electronic Journal of Combinatorics} 25 (2018), P3.1, 28 pp.

\bibitem{Evans}
T. Evans, Embedding incomplete latin squares,
\emph{American Mathematical Monthly} 67 (1960),
958--961.

\bibitem{Fiala}
J. Fiala,
NP-completeness of the edge precoloring extension 
problem on bipartite graphs,
\emph{Journal of Graph Theory} 43 (2003), pp. 156--160.


\bibitem{Fink}
J. Fink,
Perfect matchings extend to Hamilton cycles in hypercubes,
{\em J. Combin. Theory Ser. B}
97 (2007), 1074--1076.

\bibitem{Galvin}
F. Galvin,
The list chromatic index of bipartite multigraphs
{\em J. Combin. Theory Ser. B} 63 (1995), 153--158.

\bibitem{GiraoKang}
A. Girao, R.J. Kang,
Precolouring extension of Vizing’s theorem,
{\em Journal of Graph Theory (in press)}

\bibitem{Gregor}
P. Gregor,
Perfect matchings extending on subcubes to Hamiltonian cycles of hypercubes
{\em Discrete Mathematics} 309 (2009), 1711-–1713.



\bibitem{Harary}
F. Harary,
A Survey of the theory of hypercube graphs,
{\em Comput. Math. Applic.} Vol 15, No. 4, pp. 277--289, 1988.



\bibitem{HavelMoravek}
I. Havel and J. Moravek, 
B-valuations of graphs. 
{\em Czech. Math. J.} 22, 338-352 (1972). 

\bibitem{Haggkvist78}
R. H\"aggkvist, 
A solution of the {E}vans conjecture for {L}atin squares of large size
{\em Combinatorics ({P}roc. {F}ifth {H}ungarian {C}olloq.,
              {K}eszthely, 1976), {V}ol. {I}},
    Colloq. Math. Soc. J\'anos Bolyai,
    18,
     495--513.


\bibitem{MarcotteSeymour}
O. Marcotte, P. Seymour,
Extending an edge coloring
{\em Journal of Graph Theory}
14 (1990), 565-–573.





\bibitem{Ryser}
H.J. Ryser, 
A combinatorial theorem with an application to Latin squares,
\emph{Proc. Amer. Math. Soc.} 2 (1951), 550--552.

\bibitem{Smetaniuk}
B. Smetaniuk,
A new construction for Latin squares I. Proof of the Evans conjecture,
\emph{Ars Combinatoria} 11 (1981), 155--172.


\bibitem{VandenbusscheWest}
J. Vandenbussche, D.B. West,
Matching extendability in hypercubes,
{\em SIAM Journal of Discrete Mathematics}
 23 (2009), 1539–-1547.


\bibitem{WangZhang}
F. Wang, H. Zhang,
Small Matchings Extend to Hamiltonian Cycles in Hypercubes,
{\em Graphs and Combinatorics} 32
(2016), 363-–376.



\end{thebibliography}
\end{document}